\newtheorem{theorem}{Theorem}[section]
\newtheorem{corollary}{Corollary}
\newtheorem{lemma}[theorem]{Lemma}
\newtheorem{proposition}{Proposition}
\newtheorem{example}{Example}
\theoremstyle{definition}
\newtheorem{definition}[theorem]{Definition}
\newtheorem{remark}{Remark}
\newtheorem*{theorem1}{Theorem 3.8 and 3.9 (Existence and Uniqueness)}
\newtheorem*{theorem2}{Theorem 3.11 (Spectral Equivalence)}
\newtheorem*{theorem3}{Theorem 6.6 and 6.8 (Dynamical Network Expansions)}
\begin{document}

\title[Isospectral Transforms, Spectral Equivalence, and Global Stability]{Isospectral Graph Transformations, Spectral Equivalence, and Global Stability of Dynamical Networks}

%    author one information
\author{L. A. Bunimovich$^1$}
%\address{ABC Math Program and School of Mathematics, Georgia Institute of Technology, 686 Cherry Street, Atlanta, GA 30332, USA}
%\email{bunimovich@math.gatech.edu}

%    author one information
\author{B. Z. Webb$^2$}
%\address{ABC Math Program and School of Mathematics, Georgia Institute of Technology, 686 Cherry Street, Atlanta, GA 30332, USA}
%\email{bwebb@math.gatech.edu}

\keywords{Graph Transformations, Spectral Equivalence, Dynamical Network, Global Stability}

\subjclass[2000]{05C50, 15A18, 37C75}

\maketitle

\begin{center}
$^{1}$ \smaller{ABC Math Program and School of Mathematics, Georgia Institute of Technology, 686 Cherry Street, Atlanta, GA 30332, USA}\\
$^{2}$ Brigham Young University, Department of Mathematics, Provo, UT 84602, USA\\
E-mail: bunimovih@math.gatech.edu and bwebb@math.byu.edu
\end{center}

\begin{abstract}
In this paper we present a general procedure that allows for the reduction or expansion of any network (considered as a weighted graph). This procedure maintains the spectrum of the network's adjacency matrix up to a set of eigenvalues known beforehand from its graph structure. This procedure can be used to establish new equivalence relations on the class of all weighted graphs (networks) where two graphs are equivalent if they can be reduced to the same graph. Additionally, dynamical networks (or any finite dimensional, discrete time dynamical system) can be analyzed using isospectral transformations. By so doing we obtain stronger results regarding the global stability (strong synchronization) of dynamical networks when compared to other standard methods.
\end{abstract}

\section{Introduction}
Real world networks, i.e. those found in nature and technology, typically have a complicated irregular structure and consist of a large number of interconnected dynamical units. Coupled biological and chemical systems, neural networks, social interacting species, and the Internet are only a few such examples \cite{Albert02,Dorogovtsev03,Faloutsos99,Newman06,Strogatz03,Watts99}. Because of this complexity, the first approach to capturing the global properties of such systems has been to model them as graphs whose nodes represent elements of the network and the edges define a topology (graph of interactions) between these elements. This is principally a static approach to modeling networks in which only the structure of the network's interactions (and perhaps the interaction's strength) are analyzed.

A more general line of research is concerned with the dynamical properties of such networks. Mathematically, this has been done by modeling networks as interacting dynamical systems \cite{Afriamovich07,Blank06,Chazottes05}. Important processes that are studied within this framework include synchronization \cite{CHB07} or contact processes, such as opinion formation and epidemic spreading. These studies give strong evidence that the structure of a network can have a substantial impact on the network dynamics \cite{BLMC06}.

Regarding this connection, the impact of a dynamical network's underlying structure on its dynamics is of major interest. In this the spectrum of the network's adjacency matrix has naturally emerged as a key quantity in the study of a variety of dynamical networks. For example, systems of interacting dynamical units are known to synchronize depending only on the dynamics of the uncoupled dynamical systems and the spectral radius of the network's adjacency matrix \cite{HOR06,HOR05}. Moreover, the eigenvalues of a network are important for determining if the dynamics of a network are regular or chaotic. \cite{Afriamovich07,Blank06}.

To aid in understanding this interplay between the structure (graph of interactions) and dynamics of a network in this paper we introduce the concept of an \textit{isospectral graph transformation}. Such transformations allow one to modify the structure of a network, at the level of a graph, while maintaining properties related to the network's dynamics. For the sake of the reader, we will present several of our results regarding isospectral graph transformations here in the introduction. However, we will do so in a very informal manner as exact formulations require the introduction of many formal definitions.

Mathematically, an isospectral graph transformation is a graph operation that modifies the structure of a graph while preserving the eigenvalues of the graph's (weighted) adjacency matrix. Besides modifying interactions, such transforms can also reduce (or increase) the number of nodes in a network. As not to violate the fundamental theorem of algebra, isospectral graph transformations preserve the spectrum of the graph (in particular the number of eigenvalues) by allowing edges to be weighted by rational functions. In other words, a matrix of smaller size can have the same spectrum as a larger matrix if the entries of the smaller are allowed to be functions of a spectral parameter. This fundamental result is contained in theorem \ref{maintheorem}.

For a graph $G$ with vertex set $V$ this is done by reducing (or expanding) $G$ with respect to specific subsets of $V$ known as structural sets (see section 3.1 for exact definitions). As a typical graph has many different structural sets it is possible to consider different isospectral transformations of the same graph $G$ as well as sequences of such transformations. However, given the large number of possible reductions of a typical graph the results of theorems \ref{theorem-1} and \ref{theorem-3} are quite useful and central to the theory of isospectral graph reductions.

\begin{theorem1} If $G$ is a graph with vertex set $V$ then it is possible to sequentially reduce $G$ to a graph on any nonempty subset $\mathcal{V}\subseteq V$ of its original vertex set. Moreover, the resulting graph does not depend on the number or sequence of intermediate reductions but only on the choice of the set $\mathcal{V}$.
\end{theorem1}

Because it is possible to uniquely reduce a graph over a subset of its vertices, isospectral graph reductions can be used to induce new equivalence relations on the set of all graphs.

\begin{theorem2}
Suppose we have a rule $\tau$ that selects a \textit{unique} subset of vertices $\tau(G)\subseteq V$ for any graph $G$. Then the rule $\tau$ induces an equivalence relation (spectral equivalence) on the set of all graphs (or all networks considered as graphs). Namely, two graphs (networks) are spectrally equivalent if they can be isospectrally reduced to the same graph (network).
\end{theorem2}

Observe that in order to induce an equivalence relation we need a rule that selects a \textit{unique} subset of the vertices of a network. For instance, the rule ``choose any vertex'' does not work. Given this restriction, an expert in biology, sociology, etc. must determine appropriate vertex sets or rules over which to reduce e.g. minimal (maximal) degree, minimal (maximal) clustering coefficient, etc. Once this is done our procedure can then be used to reduce the network(s). However, the rule must be chosen by an expert in a corresponding applied field.

Importantly, if using a particular rule does or does not yield insight into a particular network (or set of networks) it is possible to try another rule. In this regard, our procedure provides a working tool for scientists, engineers, and others for the analysis of their respective networks. However, determining the correct rule to use requires an experts knowledge of his/her own field.

Network reductions themselves are motivated by the need to find ways of reducing a network's complexity while simultaneously maintaining some important network characteristic(s). This amounts to coarse graining or finding the right scale at which to view the network. Isospectral network transformations, specifically reductions, serve this need by allowing one to view a network as a smaller network with essentially the same spectrum. Finding the right scale here then amounts to finding the right structural set or rule over which to reduce the network, which is the job of the expert.

We note that by allowing the edge weights of the graphs to be functions it may appear that our procedure is trading the complexity of the graph's structure for complex edge weights. In fact, it is often possible to isospectrally transform a graph in a way that does not change edge weights of the graph or preserves the type of edge weights a graph has e.g. positive integers, real numbers, etc. (See theorems \ref{theorem-2} and \ref{theoremunital}.)

Moreover, it is worth noting that one can reduce the \textit{line graph} (often called an \textit{edge graph}) $L(G)$ of a graph $G$. As the line graph of $L(G)$ is formed by associating a vertex with each edge of $G$ then reducing over a structural set of $L(G)$ amounts to reducing $G$ over a particular edge set. From this point of view, isospectral graph reductions can be done either over vertex sets or edge sets.

Of primary interest is the fact that isospectral graph transformations suggest other useful transformations on interacting dynamical systems i.e. dynamical networks. An important example in this paper is a \textit{dynamical network expansion} in which a dynamical network is modified in a way that preserves its dynamics but alters its associated graph structure.

Such transforms provide a new tool for the study of the interplay between the structure (topology) and dynamics of dynamical networks. Much as general dynamical systems are investigated via change of coordinates, isospectral network transforms introduce a mechanism for rearranging the specific (graph) structure of a system while preserving the networks dynamics in an essential way. By so doing the original network's dynamics can be investigated by studying the transformed dynamical network. The main result of this technique is the following.

\begin{theorem3}
If an expansion of a dynamical network has a globally attracting fixed point then the network itself has a globally attracting fixed point. Moreover, determining whether a dynamical network has a global attractor is more easily done by analyzing an expansion rather than the original network.
\end{theorem3}

That is, the method of dynamical network expansion provides a new efficient tool for establishing the global stability of dynamical networks. Moroever, the result(s) theorem \ref{gafp} and theorem \ref{last} generalize the results given in \cite{Afriamovich07,Afriamovich10}. Moreover, as any finite dimension dynamical system with discrete time can be considered a dynamical network this technique can be applied to a very large class of dynamical systems.

The structure of the paper is as follows. In section 2 we introduce dynamical networks along with their graph of interactions. We then give sufficient conditions under which a dynamical network will have a globally attracting fixed point. Section 3 introduces the procedure of an isospectral graph reduction and results on sequences of such transformations. Graph transformations that preserve weight sets and spectral equivalence are treated in section 4. The proof of the theorems contained in sections 3 and 4 are then given in section 5. In section 6 the behavior of dynamical networks is investigated via dynamical network expansions. Section 7 contains some concluding remarks.

\section{Preliminaries}

Dynamical networks or networks of interacting dynamical systems are composed of (i) local dynamical systems which have their own (local intrinsic) dynamics, (ii) interactions between these (elements of the network) local systems, and (iii) the graph of interactions (topology of the network).

\subsection{Dynamical Networks and their Graph Structure}

Our first task is to establish a mathematical framework for the investigation of dynamical networks. This is done following the approach in \cite{Afriamovich07}.

Let $i\in \mathcal{I}=\{1,\dots,n\}$ and $T_i: X_i\rightarrow X_i$ be maps on the complete metric space $(X_i,d)$ where
\begin{equation}\label{eq0.1}
L_i=\sup_{x_i\neq y_i\in X_i}\frac{d(T_i(x_i),T_i(y_i))}{d(x_i,y_i)}<\infty.
\end{equation}
Let $(T,X)$ denote the direct product of the local systems $(T_i,X_i)$ over $\mathcal{I}$ on the complete metric space $(X,d_{max})$ where for $\textbf{x},\textbf{y}\in X$
$$d_{max}(\textbf{x},\textbf{y})=\max_{i\in\mathcal{I}}\{d(x_i,y_i)\}.$$

\begin{definition}\label{interaction}
A map $F:X \rightarrow X$ is called an \textit{interaction} if for every $j\in \mathcal{I}$ there exists a nonempty collection of indices $\mathcal{I}_j\subseteq\mathcal{I}$ and a continuous function
$$F_j:\bigoplus_{i\in \mathcal{I}_j} X_i\rightarrow X_j,$$
that satisfies the following Lipschitz condition for constants $\Lambda_{ij}\geq 0:$
\begin{equation}\label{eq2.3}
d\big(F_j(\{x_i\}),F_j(\{y_i\})\big)\leq \sum_{i\in \mathcal{I}_j} \Lambda_{ij} d(x_i,y_i)
\end{equation}
for all $\{x_i\},\{y_i\}\in \bigoplus_{i\in \mathcal{I}_j} X_i$ where $\{x_i\}$ is the restriction of $x\in X$ to $\bigoplus_{i\in \mathcal{I}_j} X_i$.
Then the (\textit{interaction}) map $F$ is defined as follows:
$$F(x)_j=F_j(\{x_i\}), \ \ j\in \mathcal{I}, \ \ i\in\mathcal{I}_j.$$
\end{definition}

The constants $\Lambda_{ij}$ in definition \ref{interaction} form the \textit{Lipschitz matrix} $\Lambda\in\mathbb{R}^{n\times n}$ where the entry $\Lambda_{ij}=0$ if $i\notin\mathcal{I}_j$.

\begin{definition}\label{netdef}
The superposition $\mathcal{F}=F\circ T$ generates the dynamical system $(\mathcal{F},X)$ which is a \textit{dynamical network}.
\end{definition}

To each dynamical network $(\mathcal{F},X)$ there is a corresponding \textit{graph of interactions}. A graph of interactions is an unweighted directed graph representing the structure of the interaction $F$ between the network elements.

An \textit{unweighted directed graph} $G$ is an ordered pair $G=(V,E)$. The sets $V$ and $E$ are the \textit{vertex set} and \textit{edge set} of $G$ respectively. If the vertex set $V=\{v_1,\dots,v_n\}$ then we denote the edge from $v_i$ to $v_j$ by $e_{ij}$.

\begin{definition}
Let $F:X\rightarrow X$ be an interaction. The graph $\Gamma_{F}=(V,E)$ with vertex set $V=\{v_1,\dots,v_n\}$ and edge set $E=\{e_{ij}: i\in \mathcal{I}_j, \ j\in\mathcal{I}\}$ is called the \textit{graph of interactions} of $F$.
\end{definition}

We note that each vertex $v_i\in V$ of the graph of interactions $\Gamma_F=(V,E)$ corresponds to the $i$th element of the dynamical network $(\mathcal{F},X)$. Moreover, there is an edge $e_{ij}\in E$ if and only if the $j$th coordinate of the interaction $F(\textbf{x})$ depends on the $i$th coordinate of $\textbf{x}$.

\begin{remark}
Suppose the interaction $F:X\rightarrow X$ is continuously differentiable and each $X_i\subset\mathbb{R}$. If $DF$ is the matrix of first partial derivatives of $F$ then the constants
$$\Lambda_{ij}=\max_{\textbf{x}\in X}|(DF)_{ji}(\textbf{x})|$$
satisfy condition (\ref{eq2.3}) for the interaction $F$.
\end{remark}

\begin{example}\label{ex1}
Let $F:[0,1]^3\rightarrow[0,1]^3$ be the interaction given by
$$F(\mathbf{x})=\left[
\begin{array}{c}
x_1(1-\alpha x_2)\\
\beta x_1x_3\\
x_3(1-\gamma x_2)
\end{array}
\right]  \ \ \text{for} \ \ 0\leq \alpha,\beta,\gamma \leq 1.
$$
Using the Lipschitz constants $\Lambda_{ij}=\max_{\textbf{x}\in X}|(DF)_{ji}(\textbf{x})|$ the interaction $F$ has Lipschitz matrix
$$\Lambda=\left[
\begin{array}{ccc}
1&\beta&0\\
\alpha&0&\gamma\\
0&\beta&1
\end{array}
\right].
$$
The graph of interactions $\Gamma_F$ of $F$ is the graph shown in figure \ref{fig1}.
\end{example}

\begin{figure}
  \begin{center}
    \begin{overpic}[scale=.5]{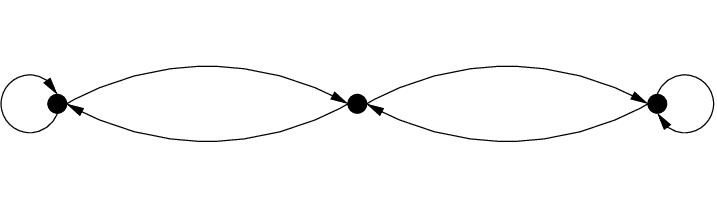}
    \put(48,0){$\Gamma_F$}
    \put(8,18){$v_1$}
    \put(48,18){$v_2$}
    \put(87.5,18){$v_3$}
    %\put(-5,12.5){$1$}
    %\put(101.5,12.5){$1$}
    %\put(27,5){$\alpha$}
    %\put(27,21.5){$\beta$}
    %\put(70,3.5){$\beta$}
    %\put(70,23){$\gamma$}
    \end{overpic}
  \end{center}
  \caption{The graph $\Gamma_F$ corresponding to the interaction $F$ in example \ref{ex1}.}\label{fig1}
\end{figure}

\subsection{Stability of Dynamical Networks}
One of the goals of this paper is to find sufficient conditions under which a dynamical network $(\mathcal{F},X)$ has simple dynamics. By simple we mean that the system $(\mathcal{F},X)$ has a globally attracting fixed point. That is, that all trajectories of the dynamical network converge to a single point as time tends to infinity.

\begin{definition}
The dynamical network $(\mathcal{F},X)$ has a \textit{globally attracting fixed point} $\tilde{\textbf{x}}\in X$ if for any $\textbf{x}\in X$, $$\displaystyle{\lim_{k\rightarrow\infty}d_{max}\big(\mathcal{F}^k(\textbf{x}),\tilde{\textbf{x}}\big)=0}.$$ If $(\mathcal{F},X)$ has a globally attracting fixed point we say it is \textit{globally stable}.
\end{definition}

Recall that the constants $L_i$ and $\Lambda_{ij}$ come from (\ref{eq0.1}) and (\ref{eq2.3}) for the local systems $(T,X)$ and interaction $F$ respectively. For the dynamical network $(\mathcal{F},X)$ we define the matrix $M_\mathcal{F}=\Lambda^T\cdot diag[L_1,\dots,L_n]$.  That is,

$$M_\mathcal{F}=  \left( \begin{array}{cccc}
\Lambda_{11}L_1 & \dots & \Lambda_{n1}L_n \\
\vdots & \ddots & \vdots \\
\Lambda_{1n}L_1 & \dots & \Lambda_{nn}L_n \end{array} \right).$$

We let $\rho(M_\mathcal{F})$ denote the \textit{spectral radius} of the matrix $\Lambda$, i.e. if $\sigma(M_\mathcal{F})$ are the eigenvalues of $\Lambda$ then
$$\rho(M_\mathcal{F})=\max\{|\lambda|:\lambda\in\sigma(M_\mathcal{F})\}.$$
The following theorem gives a sufficient condition under which a dynamical network has a globally attracting fixed point.

\begin{theorem}\label{stability}
If $\rho\big(M_\mathcal{F}\big)<1$ then the dynamical network $(\mathcal{F},X)$ has a globally attracting fixed point.
\end{theorem}

%We omit the proof of this theorem here because of its length as suggested by the paper's referee. This paper with all the proofs included can be found at ``address"?

Before proving theorem \ref{stability} we observe that if $\mathbf{v}\in\mathbb{C}^{n\times 1}$ then its $\ell^\infty$ norm $||\mathbf{v}||_\infty=\max_i|v_i|$. The $\ell^\infty$ norm of a matrix $A\in\mathbb{C}^{n\times n}$ is
$$|||A|||_\infty=\max_{1\leq i\leq n}\sum_{j=1}^n|a_{ij}|$$
or the maximum absolute row sum of $A$. Moreover, the $\ell^\infty$ matrix norm is \textit{sub-multiplicative}, i.e. $|||AB|||_{\infty}\leq|||A|||_{\infty}|||B|||_{\infty}$ for any $A,B\in\mathbb{C}^{n\times n}$. A proof of theorem \ref{stability} is the following.

\begin{proof}
For $\textbf{x},\textbf{y}\in X$ and $1\leq j\leq n$
\begin{align*}
d\big(\mathcal{F}(\textbf{x})_j,\mathcal{F}(\textbf{y})_j\big)=& d\big(F_j(\{T(\textbf{x})_i\}),F_j(\{T(\textbf{y})_i\})\big)\\
\leq& \sum_{i\in\mathcal{I}}\Lambda_{ij} d\big(T_i(x_i),T_i(y_i)\big)\\
\leq& \sum_{i\in \mathcal{I}}\Lambda_{ij}L_i d(x_i,y_i).
\end{align*}
Therefore, each entry of the column vector
$$\left[\begin{array}{c}
  d\big(\mathcal{F}(\textbf{x})_1,\mathcal{F}(\textbf{y})_1\big)\\
  \vdots\\
  d\big(\mathcal{F}(\textbf{x})_n,\mathcal{F}(\textbf{y})_n\big)
  \end{array}\right]\leq
  M_\mathcal{F}\left[\begin{array}{c}
  d\big(x_1,y_1\big)\\
  \vdots\\
  d\big(x_n,y_n\big)
  \end{array}\right].$$

$$\text{As} \ \ \left[\begin{array}{c}
  d\big(\mathcal{F}^2(\textbf{x})_1,\mathcal{F}^2(\textbf{y})_1\big)\\
  \vdots\\
  d\big(\mathcal{F}^2(\textbf{x})_n,\mathcal{F}^2(\textbf{y})_n\big)
 \end{array}\right]\leq
  M_\mathcal{F}\left[\begin{array}{c}
  d\big(\mathcal{F}(\textbf{x})_1,\mathcal{F}(\textbf{y})_1\big)\\
  \vdots\\
  d\big(\mathcal{F}(\textbf{x})_n,\mathcal{F}(\textbf{y})_n\big)
  \end{array}\right]\leq
  M^2_\mathcal{F}\left[\begin{array}{c}
  d\big(x_1,y_1\big)\\
  \vdots\\
  d\big(x_n,y_n\big)
  \end{array}\right]$$
by the same reasoning then inductively
\begin{equation}\label{theoremeq}
d_{max}\big(\mathcal{F}^k(\textbf{x}),\mathcal{F}^k(\textbf{y})\big)\leq\Big|\Big|M^k_\mathcal{F}
\left[\begin{array}{c}
d(x_1,y_1)\\
\vdots\\
d(x_n,y_n)
\end{array}\right]
\Big|\Big|_{\infty}
\end{equation}
for all $k>0$.

By the \textit{Jordan canonical form theorem} there is a non-singular matrix $A\in\mathbb{C}^{n\times n}$ and a block-diagonal matrix $J\in\mathbb{C}^{n\times n}$ such that $\mathcal{M}_{\mathcal{F}}=AJA^{-1}$. In particular,
$$J=\left[
\begin{array}{cccc}
J_{m_1}(\lambda_1)&0&\dots&0\\
0&J_{m_2}(\lambda_2)& &\vdots\\
\vdots& &\ddots&0\\
0&\dots&0&J_{m_t}(\lambda_t)\\
\end{array}
\right]$$
where
$$J_{m_i}(\lambda_i)=\left[
\begin{array}{ccccc}
\lambda_i&1&0&\dots&0\\
0&\lambda_i&1&&0\\
\vdots& &\ddots&\ddots&\vdots\\
0&&&\lambda_i&1\\
0&0&\dots&0&\lambda_i\\
\end{array}
\right]\in\mathbb{C}^{m_i\times m_i}$$
and $\lambda_i\in\sigma(\mathcal{M}_\mathcal{F})$ for $1\leq i\leq t$. Moreover, as $J$ is block diagonal then for $k\geq0$,
$$J^k=\left[
\begin{array}{cccc}
J_{m_1}^k(\lambda_1)&0&\dots&0\\
0&J_{m_2}^k(\lambda_2)& &\vdots\\
\vdots& &\ddots&0\\
0&\dots&0&J^k_{m_t}(\lambda_t)\\
\end{array}
\right].$$

As the $\ell^\infty$ norm of a matrix is its maximum absolute row sum then $$\displaystyle{|||J^k|||_{\infty}=\max_{1\leq i\leq t}|||J^k_{m_i}(\lambda_i)|||_{\infty}}.$$ A well-known result states that the $k$th power of any Jordan block $J_{m_i}(\lambda_i)$ is
$$J^k_{m_i}(\lambda_i)=\left[
\begin{array}{ccccc}
{k\choose 0}\lambda_{i}^k&{k\choose 1}\lambda_{i}^{k-1}&{k\choose 2}\lambda_{i}^{k-2}&\dots&{k\choose m_i-1}\lambda_{i}^{k-m_i+1}\\
0&{k\choose 0}\lambda_{i}^k&{k\choose 1}\lambda_{i}^{k-1}&\dots&{k\choose m_i-2}\lambda_{i}^{k-m_i+2}\\
\vdots&&\ddots&&\vdots\\
0&&&{k\choose 0}\lambda_{i}^k&{k\choose 1}\lambda_{i}^{k-1}\\
0&0&\dots&0&{k\choose 0}\lambda_{i}^k\\
\end{array}
\right]
$$
for any $k\geq m_i-1$. Hence, for $k\geq m_i-1$ the matrix norm
\begin{align*}
|||J_{m_i}^k(\lambda_i)|||_{\infty}&=\sum_{j=0}^{m_i-1}\Big|{k\choose j}\lambda_i^{k-j}\Big|\\
&=\Big[\big|{k\choose 0}\lambda_{i}^{m_i-1}\big|+\big|{k\choose 1}\lambda_{i}^{m_i-2}\big|+\dots+\big|{k\choose m_i-1}\big|\Big]\big|\lambda_i^{k-m_i+1}\big|\\
&\leq C_{m_i}\rho(\mathcal{M}_{\mathcal{F}})^{k-m_i+1}
\end{align*}
where $C_{m_i}=\sum_{j=0}^{m_i-1}\big|{k\choose j}\lambda_{i}^{m_i-j-1}\big|$.

Suppose $\rho(\mathcal{M}_{\mathcal{F}})<1$. Letting $\displaystyle{m=\max_{0\leq i\leq t}m_i}$ and $\displaystyle{C=\max_{1\leq i\leq t}C_{m_i}}$ then

$$|||J^k|||_{\infty}\leq C\rho(\mathcal{M}_{\mathcal{F}})^{k-m+1} \ \ \text{for all} \ \ k\geq m-1.$$

Via equation (\ref{theoremeq}),
\begin{align*}
\sum_{k=m}^{\infty}d_{max}\big(\mathcal{F}^k(\textbf{x}),\mathcal{F}^k(\textbf{y})\big)\leq
\sum_{k=m}^\infty\Big|\Big|M^k_\mathcal{F}
\left[\begin{array}{c}
d(x_1,y_1)\\
\vdots\\
d(x_n,y_n)
\end{array}\right]&
\Big|\Big|_{\infty}\\
\leq\sum_{k=m}^{\infty}\Big|\Big|\Big|AJ^kA^{-1}\Big|\Big|\Big|_{\infty}\Big|\Big|
\left[\begin{array}{c}
d(x_1,y_1)\\
\vdots\\
d(x_n,y_n)
\end{array}\right]&
\Big|\Big|_{\infty}\\
\leq\Big|\Big|\Big|A\Big|\Big|\Big|_{\infty}\Big|\Big|\Big|A^{-1}\Big|\Big|\Big|_{\infty}
\Big|\Big|
\left[\begin{array}{c}
d(x_1,y_1)\\
\vdots\\
d(x_n,y_n)
\end{array}\right]&
\Big|\Big|_{\infty}\sum_{k=m}^{\infty}\Big|\Big|\Big|J^k\Big|\Big|\Big|_{\infty}\\
\leq\Big|\Big|\Big|A\Big|\Big|\Big|_{\infty}\Big|\Big|\Big|A^{-1}\Big|\Big|\Big|_{\infty}
\Big|\Big|
\left[\begin{array}{c}
d(x_1,y_1)\\
\vdots\\
d(x_n,y_n)
\end{array}\right]&
\Big|\Big|_{\infty}\frac{C\rho(\mathcal{M}_{\mathcal{F}})}{1-\rho(\mathcal{M}_{\mathcal{F}})}<\infty.
\end{align*}
Letting $\mathbf{y}=F(\mathbf{x})$ it then follows that $\sum_{k=m}^{\infty}d_{max}\big(\mathcal{F}^k(\textbf{x}),\mathcal{F}^{k+1}(\textbf{x})\big)<\infty$. Hence, for $\epsilon>0$ there exists an $N$ such that $\sum_{k=N}^{\infty}d_{max}\big(\mathcal{F}^k(\textbf{x}),\mathcal{F}^{k+1}(\textbf{x})\big)<\epsilon$. Therefore,
$$d_{max}\big(F^k(\mathbf{x}),F^{\ell}(\mathbf{x})\big)<\epsilon$$
for any $k,\ell>N$ or the sequence $\{F^k(\mathbf{x})\}_{k\geq 1}$ is Cauchy. Since $X$ is complete then this sequence converges.

Suppose $\mathcal{F}^k(\textbf{x})\rightarrow \tilde{\textbf{x}}$. As $\mathcal{F}$ is continuous it follows that $\mathcal{F}(\tilde{\textbf{x}})=\tilde{\textbf{x}}$. Moreover, given that $\sum_{k=1}^{\infty}d_{max}\big(\mathcal{F}^k(\tilde{\textbf{x}}),\mathcal{F}^k(\textbf{y})\big)\leq\infty$ for any $\textbf{y}\in X$ then $\mathcal{F}^k(\textbf{y})\rightarrow\tilde{\textbf{x}}$ completing the proof.
\end{proof}

\begin{example}\label{ex2}
Let $T_i:[0,1]\rightarrow [0,1]$ be the logistic map $T_i(x_i)=4x_i(1-x_i)$ for $1\leq i\leq 3$. Let $F:[0,1]^3\rightarrow[0,1]^3$ be the interaction given by
$$F(\mathbf{x})=\left[
\begin{array}{c}
1-x_1x_2/9\\
1-x_1x_3/9\\
1-x_2x_3/9
\end{array}
\right].
$$
One can check that the constants $L_i=4$ and
$$\Lambda_{ij}=
\begin{cases}
2/9 \ &\text{if} \ \ i+j\neq 4\\
\ 0 \ &\text{otherwise}
\end{cases}
$$
satisfy (\ref{eq0.1}) and (\ref{eq2.3}) respectively. In particular, $\Lambda_{ij}=\max_{\textbf{x}\in X}|(DF)_{ji}(\textbf{x})|$. For these constants the matrix
$$M_{\mathcal{F}}=\left[
\begin{array}{ccc}
4/9&4/9&0\\
4/9&0&4/9\\
0&4/9&4/9
\end{array}
\right].
$$
As $\rho(M_\mathcal{F})=8/9<1$ then the dynamical network $(\mathcal{F},X)$ has a globally attracting fixed point.
\end{example}

The local systems $(T,X)$ are said to be \textit{stable} if $\max_{j\in\mathcal{I}}\{L_i\}<1$ and are said to be \textit{unstable} otherwise. We say the interaction $F:X\rightarrow X$ \textit{stabilizes} the local systems $(T,X)$ if the local systems are unstable but the dynamical network $(\mathcal{F},X)$ has a globally attracting fixed point. If the local systems $(T,X)$ are stable and $(\mathcal{F},X)$ has a globally attracting fixed point we say the interaction $F:X\rightarrow X$ \textit{maintains} the stability of $(T,X)$. The following is a corollary to theorem \ref{stability}.

\begin{corollary}\label{col2}
For the dynamical network $(\mathcal{F},X)$ let $\max_{j\in\mathcal{I}}\{L_i\}=L$. If $L\rho(\Lambda)<1$ then the interaction $F$ stabilizes (or maintains the stability of) the local systems $(T,X)$.
\end{corollary}

Before proving this we require the following. For $A,B\in\mathbb{R}^{n\times n}$ we write
\begin{align*}
A\leq& B \ \ \text{if} \ \ A_{ij}\leq B_{ij} \ \ \text{for} \ \ 1\leq i,j\leq n.
\end{align*}
If $0\leq A\leq B$ then a well known result in matrix theory states that $\rho(A)\leq\rho(B)$ (see chapter 8, \cite{Horn85}). We now give a proof of corollary \ref{col2}.

\begin{proof}
As $0\leq\Lambda^T\cdot diag[L_1,\dots,L_n]\leq L\Lambda^T$ then $\rho(M_\mathcal{F})\leq \rho(L\Lambda^T)$. Given that $L\Lambda^T=\{L\lambda:\lambda\in\sigma(\Lambda^T)\}$ then this implies $\rho(L\Lambda^T)= L\rho(\Lambda)$ which in turn implies that $\rho(M_\mathcal{F})\leq L\rho(\Lambda)$. Hence, if $L\rho(\Lambda)<1$ then by theorem \ref{stability} the interaction $F$ stabilizes or maintains the stability of the local systems $(T,X)$.
\end{proof}

For the dynamical network $\mathcal{F}=F\circ T$ in example \ref{ex2} note that $L=4$ and $\rho(\Lambda)=2/9$. As $L\rho(\Lambda)<8/9$ then corollary \ref{col2} implies that the interaction $F$ stabilizes the local systems $(T,X)$.

\subsection{Dynamical Networks Without Local Dynamics}

As defined in section 2.1 a dynamical network $\mathcal{F}=F\circ T$ is the composition of the network's local dynamics $T$ and the interaction $F$. However, if the system has no local dynamics, i.e. $T=id$ is the identity map, then the dynamical network $\mathcal{F}$ is simply the interaction $F$.

\begin{remark}
Note that any composition $\mathcal{F}=F\circ T$ can be considered to be an interaction. Writing $\mathcal{F}=(F\circ T)\circ id$ the dynamical network $(\mathcal{F},X)$ is simply the interaction $F\circ T$ with no local dynamics.
\end{remark}

The following is another corollary of theorem \ref{stability}.

\begin{corollary}\label{cor-1}
Suppose the constants $L_i$ and $\Lambda_{ij}$ satisfy (\ref{eq0.1}) and (\ref{eq2.3}) for the map $\mathcal{F}=F\circ T$. Let $\tilde{\mathcal{F}}=(F\circ T)\circ id$ be the map inducing the dynamical network $(\tilde{F},X)$ with interaction $F\circ T$ and no local dynamics. Then there exist constants $\tilde{L}_i$ and $\tilde{\Lambda}_{ij}$ satisfying (\ref{eq0.1}) and (\ref{eq2.3}) for $id$ and $F\circ T$ respectively such that $\rho(M_{\tilde{\mathcal{F}}})\leq\rho(M_{\mathcal{F}})$.
\end{corollary}

\begin{proof}
Let $L_i$ and $\Lambda_{ij}$ be as in the statement of corollary \ref{cor-1}. Suppose $\mathbf{x},\mathbf{y}\in X$. Then $$d\big(id(\mathbf{x})_i,id(\mathbf{y})_i\big)=d(x_i,y_i).$$ Hence, the constants $\tilde{L}_i=1$ satisfy (\ref{eq0.1}) for the local systems $(id,X)$. Moreover,

\begin{align*}
d\big((F\circ T)(\mathbf{x})_j,(F\circ T)(\mathbf{y})_j\big)=&d\big(F_j(\{T_i(x_i)\}),F_j(\{T_i(y_i)\})\big)\\
                                            \leq&\sum_{i\in\mathcal{I}}\Lambda_{ij}d\big(T_i(x_i),T_i(y_i)\big)\\
                                            \leq&\sum_{i\in\mathcal{I}}\Lambda_{ij}L_i d(x_i,y_i).
\end{align*}

Therefore, the constants $\tilde{\Lambda}_{ij}=\Lambda_{ij}L_i$ satisfy equation (\ref{eq2.3}) for the interaction $F\circ T$. For this choice of constants note that the matrix $\mathcal{M}_\mathcal{\tilde{F}}$ is equal to the matrix $\mathcal{M}_\mathcal{F}$. This implies the result of the corollary.
\end{proof}

\begin{example}\label{local}
Let $T_i:[0,1]\rightarrow [0,1]$ be the map $T_i(x)=\sin(\pi x_i)$ for $1\leq i\leq 2$. Suppose $F:[0,1]^2\rightarrow[0,1]^2$ is the interaction given by
$$F(\mathbf{x})=\left[
\begin{array}{c}
1-x_2^2/4\\
1-x_1^2/4
\end{array}
\right].
$$

Note that the constants $L_i=\pi$ and $\Lambda_{ij}=\max_{\textbf{x}\in X}|(DF)_{ji}(\textbf{x})|$ satisfy (\ref{eq0.1}) and (\ref{eq2.3}) for $T$ and $F$ respectively. If the dynamical network $\mathcal{F}=F\circ T$ then
$$M_\mathcal{F}=\left[
\begin{array}{cc}
0&\pi/2\\
\pi/2&0
\end{array}
\right].
$$
As $\rho(M_\mathcal{F})=\pi/2>1$ then theorem \ref{stability} does not directly apply to the dynamical network $\mathcal{F}$.

However, suppose $\tilde{\mathcal{F}}=(F\circ T)\circ id$ is the dynamical network with interaction $F\circ T$ and no local dynamics. Let $L_i=1$ and $\tilde{\Lambda}_{ij}=\max_{\textbf{x}\in X}|\big(D(F\circ T)\big)_{ji}(\textbf{x})|$ which satisfy (\ref{eq0.1}) and (\ref{eq2.3}) for $T=id$ and $F\circ T$ respectively. With this choice of constants
$$M_{\tilde{\mathcal{F}}}=\left[
\begin{array}{cc}
0&\pi/4\\
\pi/4&0
\end{array}
\right].
$$
As $\rho(M_{\tilde{\mathcal{F}}})=\pi/4<1$ then the dynamical network $(\tilde{\mathcal{F}},X)$ has a globally attracting fixed point. Importantly, given that $\mathcal{F}=\tilde{\mathcal{F}}$ then the dynamical $(\mathcal{F},X)$ must also have a globally attracting fixed point.
\end{example}

Note that if $L_i=1$ for all $i$ then $\Lambda\cdot diag[L_1,\dots,L_n]=\Lambda$. This observation leads to the following remark.

\begin{remark}
If $(\mathcal{F},X)$ is considered as a dynamical network with no local dynamics then $\mathcal{M}_\mathcal{F}=\Lambda$ for any constants $\Lambda_{ij}$ satisfying (\ref{eq2.3}). Hence, $(\mathcal{F},X)$ has a globally attracting fixed point if $\rho(\Lambda)<1$.
\end{remark}

By considering a dynamical network in a modified form, i.e. as an interaction, it is therefore possible to obtain improved estimates of the network's global stability. The result of corollary \ref{cor-1} is then the first step toward answering the following general question: How can a dynamical network (equivalently interaction) be modified to get improved estimates of a network's global stability?

Additionally, in this paper we consider whether it is possible to transform (simplify) a dynamical network at the level of its graph of interactions while maintaining the dynamic properties of the network. Both questions serve to motivate the graph transformations introduced in the following sections.

\section{Isospectral Graph Reductions}

In this section we formally describe the isospectral reduction process of a graph. We then give specific examples of this method and present some results regarding sequences of isospectral reductions and spectral equivalence of graphs.

\subsection{The class of graphs $\mathbb{G}$}

We consider the following class of graphs, namely those graphs that are weighted, directed, with edge weights in the set $\mathbb{W}[\lambda]$ (defined below). Such graphs form the class $\mathbb{G}$. A graph $G\in\mathbb{G}$ is then an ordered triple $G=(V,E,\omega)$ where $V$ and $E$ are again the \textit{vertex set} and \textit{edge set} of $G$ respectively. The function $\omega:E\rightarrow\mathbb{W}[\lambda]$ gives the \textit{edge weights} of $G$. We adopt the standard convention that each edge of the weighted graph $G$ has a nonzero weight.

\begin{remark}
The class of graphs $\mathbb{G}$ is very general as it contains, for instance, the class of undirected graphs with numerical weighs.
\end{remark}

For notational convenience we assume that each vertex set $V$ is given the labelling $V=\{v_1,\dots,v_n\}$. Also, any graph $G$ written as an ordered triple $G=(V,E,\omega)$ will be implicitly assumed to be a graph in $\mathbb{G}$.

Let $\mathbb{C}[\lambda]$ be the set of polynomials in the complex variable $\lambda$ with complex coefficients i.e. the polynomial ring over $\mathbb{C}$ with indeterminate $\lambda$. We denote by $\mathbb{W}[\lambda]$ the field of fractions of $\mathbb{C}[\lambda]$ or the set of rational functions of the form $p/q$ where $p,q\in\mathbb{C}[\lambda]$, and $q\neq 0$. Our first task is to extend the definition of the eigenvalues of a matrix $A\in\mathbb{C}^{n\times n}$ to the eigenvalues of a matrix $A(\lambda)\in\mathbb{W}[\lambda]^{n\times n}$.

The element $\alpha$ of the set $A$ that includes multiplicities has \textit{multiplicity} $m$ if there are $m$ elements of $A$ equal to $\alpha$. Suppose $A$ and $B$ are sets that include multiplicities. If $\alpha\in A$ with multiplicity $m$ and $\alpha\in B$ with multiplicity $n$ then\\
(i) the \textit{union} $A\cup B$ is the set in which $\alpha$ has multiplicity $m+n$; and\\
(ii)the \textit{difference} $A-B$ is the set in which $\alpha$ has multiplicity $m-n$ if $m-n>0$ and multiplicity $0$ otherwise.

If a square matrix $A\in\mathbb{C}^{n\times n}$ then its \textit{characteristic polynomial} $\det\big(A-\lambda I\big)\in\mathbb{C}[\lambda]$. However, if $A(\lambda)\in\mathbb{W}[\lambda]^{n\times n}$ then
\begin{equation}\label{char}
\det\big(A(\lambda)-\lambda I\big)\in\mathbb{W}[\lambda].
\end{equation}
i.e. this determinant may be a rational function of $\lambda$. Despite this we will adopt the convention of referring to (\ref{char}) as the characteristic polynomial of $A(\lambda)$.

\begin{definition}\label{def1.1}
For a matrix $A(\lambda)\in\mathbb{W}[\lambda]^{n\times n}$ suppose the characteristic polynomial $\det(A(\lambda)-\lambda I)=p/q$ where $p,q\in\mathbb{C}[\lambda]$. Define the sets
$$P=\{\lambda\in\mathbb{C}:p=0\} \ \ \text{and} \ \ Q=\{\lambda\in\mathbb{C}:q=0\}$$
where these sets includes multiplicities. We call the sets $$\sigma\big(A(\lambda)\big)=P-Q \ \text{and} \ \sigma^{-1}\big(A(\lambda)\big)=Q-P$$ the \textit{spectrum} (or set of \textit{eigenvalues}) of $A(\lambda)$ and the \textit{inverse spectrum} of $A(\lambda)$ respectively.
\end{definition}

Of primary importance is the fact that the representation of $p/q\in\mathbb{W}[\lambda]$ is not unique. That is, $p/q$ is equivalent to $r/s$ for $p,q,r,s\in\mathbb{C}[\lambda]$ if $ps=qr$. It is therefore necessary to show that the spectrum and inverse spectrum of a matrix $A(\lambda)\in\mathbb{W}[\lambda]^{n\times n}$ are well defined.

That is, $p/q$ is equivalent to $r/s$ in $\mathbb{W}^{n\times n}$ for $p,q,r,s\in\mathbb{C}[\lambda]$. Let $P$ and $Q$ be defined as in definition \ref{def1.1}. Similarly define
$$R=\{\lambda\in\mathbb{C}:r=0\} \ \ \text{and} \ \ S=\{\lambda\in\mathbb{C}:s=0\}.$$
As $ps=qr$ it follows that $P\cup S=Q\cup R$. Moreover, as $P\cup S-Q=Q\cup R-Q=R$ then $(P\cup Q)-S=R-S$.

From the fact that $(P\cup S-Q)-S=(P\cup S-S)-Q$ it follows that $P-Q=R-S$. Similarly, one can show that $Q-P=S-R$. That is, the spectrum and inverse spectrum given by definition \ref{def1.1} are well defined.

Before proceeding we note the following. According to definition \ref{def1.1} any matrix $A\in\mathbb{C}^{n\times n}$ has a well defined spectrum $\sigma(A)$ and inverse spectrum $\sigma^{-1}(A)$. In particular, if $A\in\mathbb{C}^{n\times n}$ the characteristic polynomial of $A$ is the ratio $p/q\in\mathbb{W}[\lambda]$ where $p=\det(A-\lambda I)$ and $q=1$.
Hence, $$\sigma(A)=\{\lambda\in\mathbb{C}:\det(A-\lambda I)=0\}, \ \ \sigma^{-1}(A)=\emptyset.$$ This can be summarized as follows.

\begin{remark}
The eigenvalues of a square matrix with entries in $\mathbb{W}[\lambda]$ is a generalization of the standard notion of the eigenvalues of a complex valued square matrix.
\end{remark}

Suppose $G=(V,E,\omega)$ with vertex set $V=\{v_1,\dots,v_n\}$. The matrix $M(G)\in\mathbb{W}[\lambda]^{n\times n}$ defined entrywise by $$M(G)_{ij}=\omega(e_{ij})$$
is called the \textit{weighted adjacency matrix} of $G$. For the graph $G$ we denote by $\sigma(G)$ and $\sigma^{-1}(G)$ the spectrum of and inverse spectrum of $M(G)$ respectively.

\subsection{Isospectral Graph Reductions}

Our next task is to develop the mathematical framework, i.e. definitions and notation, needed to describe an isospectral reduction of a graph. This requires the following standard terminology.

A \textit{path} $P$ in  the graph $G=(V,E,\omega)$ is an ordered sequence of distinct vertices $v_1,\dots,v_m\in V$ such that $e_{i,i+1}\in E$ for $1\leq i\leq m-1$. We call the vertices $v_2,\dots,v_{m-1}$ of $P$ the \textit{interior} vertices of $P$. If the vetices $v_1$ and $v_m$ are the same then $P$ is a \textit{cycle}. A cycle $v_1\dots,v_m$ is called a \textit{loop} if $m=1$. Note that as $v_i,v_i$ is a loop of $G$ if and only if $e_{ii}\in E$ we may refer to the edge $e_{ii}$ as the loop. If $S\subseteq V$ where $V$ is the vertex set of a graph we will write $\bar{S}=V-S$.

The main idea behind an isospectral reduction of a graph $G=(V,E,\omega)$ is that we reduce $G$ to a smaller graph on some subset $S\subset V$. The sets $S$ for which this is possible are defined as follows.

\begin{definition}\label{def1}
Let $G=(V,E,\omega)$. A nonempty vertex set $S\subseteq V$ is a \textit{structural set} of $G$ if\\
(i) each cycle of $G$, that is not a loop, contains a vertex in $S$; and\\
(ii) $\omega(e_{ii})\neq \lambda$ for each $v_i\in \bar{S}$.
\end{definition}

Observe that assumption (i) of definition \ref{def1} implies that a structural set $S$ of $G$ depends intrinsically on the structure of $G$. On the other hand, part (ii) of definition \ref{def1} is the formal assumption that the loops of the vertices in $\bar{S}$, i.e. the complement of $S$, do not have weight equal to $\lambda\in\mathbb{W}[\lambda]$. For $G\in\mathbb{G}$ let $st(G)$ denote the set of all structural sets of the graph $G$.

\begin{definition}
Suppose $G=(V,E,\omega)$ with structural set $S=\{v_1,\dots,v_m\}$. Let $\mathcal{B}_{ij}(G;S)$ be the set of paths or cycles from $v_i$ to $v_j$ with no interior vertices in $S$. We call a path or cycle $\beta\in\mathcal{B}_{ij}(G;S)$ a \textit{branch} of $G$ with respect to $S$. We let
$$\mathcal{B}_S(G)=\bigcup_{1\leq i,j \leq m} \mathcal{B}_{ij}(G;S)$$
denote the set of all branches of $G$ with respect to $S$.
\end{definition}

If $\beta=v_1,\dots,v_m$ is a branch of $G$ with respect to $S$, i.e. $\beta\in\mathcal{B}_S(G)$, and $m>2$ then let
\begin{equation}\label{eq0.9}
\mathcal{P}_{\omega}(\beta)=\omega(e_{12})\prod_{i=2}^{m-1}\frac{\omega(e_{i,i+1})}{\lambda-\omega(e_{ii})}.
\end{equation}
For $m=1,2$ let $\mathcal{P}_{\omega}(\beta)=\omega(e_{1m})$. We call $\mathcal{P}_{\omega}(\beta)$ the \textit{branch product} of $\beta$. Note that assumption (ii) in definition \ref{def1} implies that the branch product of any $\beta\in\mathcal{B}_S(G)$ is always defined.

In a procedure we term an \textit{isospectral graph reduction} we replace the branches of a graph with edges. The following definition specifies the weights of these edges.

\begin{definition}\label{IR}
Let $G=(V,E,\omega)$ with structural set $S=\{v_1\,\dots,v_m\}$. Define the edge weights
\begin{equation}\label{eq1.0}
\mu(e_{ij})=\begin{cases}
\displaystyle{\sum_{\beta\in\mathcal{B}_{ij}(G;S)}\mathcal{P}_\omega(\beta)} & \text{if} \ \ \ \mathcal{B}_{ij}(G;S)\neq\emptyset\\
\ \ \ \ \ 0 & \text{otherwise}
            \end{cases} \ \ \ \text{for} \ \ \ 1\leq i,j\leq m.
\end{equation}
The graph $\mathcal{R}_S(G)=(S,\mathcal{E},\mu)$ where $e_{ij}\in \mathcal{E}$ if $\mu(e_{ij})\neq 0$
is the \textit{isospectral reduction} of $G$ over $S$.
\end{definition}

Observe that $\mu(e_{ij})$ in definition \ref{IR} is the weight of the edge $e_{ij}$ in $\mathcal{R}_S(G)$. Moreover, as $W[\lambda]$ is closed under both addition and multiplication then the edge weights $\mu(e_{ij})$ of $\mathcal{R}_S(G)$ are also in the set $W[\lambda]$. Hence, the isospectral reduction $\mathcal{R}_S(G)$ is again a graph in $\mathbb{G}$.

\begin{figure}
  \begin{center}
    \begin{overpic}[scale=.5]{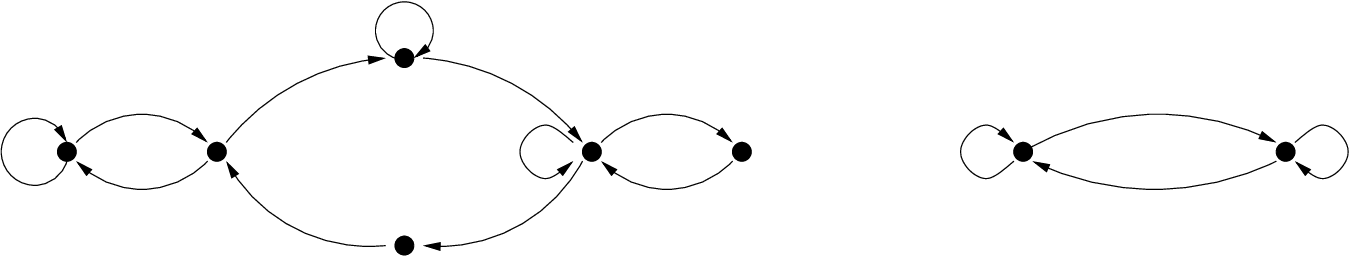}
    \put(28.5,-3){$G$}
    \put(4,10.5){$v_5$}
    \put(14,10.5){$v_1$}
    \put(75,10.5){$v_1$}
    \put(28.5,11.5){$v_2$}
    \put(28.5,2.5){$v_4$}
    \put(42,10.5){$v_3$}
    \put(93,10.5){$v_3$}
    \put(53,10.5){$v_6$}
    \put(83,12){{$\frac{1}{\lambda-1}$}}
    \put(85,1.5){$\frac{1}{\lambda}$}
    \put(65,7){$\frac{1}{\lambda-1}$}
    \put(100.5,7){$\frac{\lambda+1}{\lambda}$}
    \put(82,-3){$\mathcal{R}_S(G)$}
    \end{overpic}
  \end{center}
  \caption{Reduction of $G$ over $S=\{v_1,v_3\}$ where each edge in $G$ has unit weight.}\label{fig2}
\end{figure}

\begin{example}\label{ex3}
Consider the graph $G=(V,E,\omega)$ given in figure \ref{fig2} (left) in which each edge of $G$ is assumed to have unit weight. The vertex set $S=\{v_1,v_3\}\subset V$ is a structural set of of $G$ as\\
(i) the three nonloop cycles of $G$, namely $v_1,v_2,v_3,v_4,v_1$; $v_1,v_5,v_1$; and $v_3,v_6,v_3$ each contain a vertex in $S$; and\\
(ii) the loop weights vertices in $\bar{S}=\{v_2,v_4,v_5,v_6\}$ are $\omega(e_{22})=1$, $\omega(e_{44})=0,$ $\omega(e_{55})=1$, and $\omega(e_{66})=0$. That is, $\omega(e_{ii})\neq\lambda$ for each $v_i\in\bar{S}$.

Conversely, the vertex set $T=\{v_1,v_5\}$ is not a structural set of $G$.This follows from the fact that the (nonloop) cycle $v_3,v_6,v_3$ does not contain a vertex of $T$.

The branches of $G$ with respect to $S$ can be seen to be $\mathcal{B}_{11}(G;S)=\{v_1,v_5,v_1\}$, $\mathcal{B}_{13}(G;S)=\{v_1,v_2,v_3\}$, $\mathcal{B}_{31}(G;S)=\{v_3,v_4,v_1\}$, and the two branches in the set $\mathcal{B}_{33}(G;S)=\{v_3,v_3; v_3,v_6,v_3\}$.

Using equation (\ref{eq0.9}) the branch product of each of these branches are given by
$$\mathcal{P}_\omega(v_1,v_5,v_1)=\mathcal{P}_\omega(v_1,v_2,v_3)=\frac{1}{\lambda-1},$$
$$\mathcal{P}_\omega(v_3,v_4,v_1)=\mathcal{P}_\omega(v_3,v_6,v_3)=\frac{1}{\lambda}, \ \text{and} \ \ \mathcal{P}_\omega(v_3,v_3)=1.$$

The weight of each edge of $\mathcal{R}_S(G)=(S,\mathcal{E},\mu)$ given by (\ref{eq1.0}) can then be computed to be
$$\mu(e_{11})=\mu(e_{13})=\frac{1}{\lambda-1},$$
$$\mu(e_{31})=\frac{1}{\lambda}, \ \text{and} \ \ \mu(e_{33})=1+\frac{1}{\lambda}=\frac{\lambda+1}{\lambda}.$$
Note that as each edge weight is nonzero then the edge set of $\mathcal{R}_S(G)$ is given by $\mathcal{E}=\{e_{11}$, $e_{13}$, $e_{31}$, $e_{33}\}$. The graph $\mathcal{R}_S(G)$ is shown in figure \ref{fig2} (right).
\end{example}

Recall that if $S$ is a structural set of the graph $G\in\mathbb{G}$ then the isospectral reduction $\mathcal{R}_S(G)$ is also a graph in $\mathbb{G}$. Hence, both $G$ and $\mathcal{R}_S(G)$ have well-defined spectra. The relation between the spectrum $\sigma(G)$ and $\sigma(\mathcal{R}_S(G))$ is given in the following theorem.

\begin{theorem}\label{maintheorem}
Let $S$ be a structural set of the graph $G\in\mathbb{G}$. Then $$\sigma\big(\mathcal{R}_S(G)\big)=\big(\sigma(G)\cup\sigma^{-1}(G|_{\bar{S}})\big)-\sigma(G|_{\bar{S}}).$$
\end{theorem}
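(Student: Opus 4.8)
The plan is to realize the reduced matrix $M(\mathcal{R}_S(G))$ as a \emph{Schur complement} of $M(G)-\lambda I$ and then pass from determinants to spectra. Label the vertices so that the structural set comes first, $V=\{v_1,\dots,v_m\}\cup\{v_{m+1},\dots,v_n\}$ with $S=\{v_1,\dots,v_m\}$ and $\bar S=\{v_{m+1},\dots,v_n\}$, and write
\begin{equation*}
M(G)-\lambda I=\begin{pmatrix} A-\lambda I & B\\ C & D-\lambda I\end{pmatrix},
\end{equation*}
where $A,B,C$ and $D=M(G|_{\bar S})$ are the blocks indexed by $(S,S)$, $(S,\bar S)$, $(\bar S,S)$ and $(\bar S,\bar S)$. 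The central (and hardest) step is the matrix identity
\begin{equation*}
M(\mathcal{R}_S(G))-\lambda I=(A-\lambda I)-B\,(D-\lambda I)^{-1}C,
\end{equation*}
i.e. that the entries $\mu(e_{ij})$ defined in (\ref{eq1.0}) are exactly the entries of $A-B(D-\lambda I)^{-1}C$.

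Here I would use both halves of the structural-set hypothesis. Since $\omega(e_{aa})\neq\lambda$ for $v_a\in\bar S$, the diagonal of $D-\lambda I$ consists of the nonzero elements $\omega(e_{aa})-\lambda$; writing $D-\lambda I=-\Lambda(I-\Lambda^{-1}D')$ with $\Lambda=\mathrm{diag}(\lambda-\omega(e_{aa}))$ and $D'$ the loop-free (off-diagonal) part of $D$, the acyclicity of $\ell(G)|_{\bar S}$ forces $D'$ to be nilpotent (order $\bar S$ topologically so $D'$ is strictly triangular). Hence $D-\lambda I$ is invertible over $\mathbb{W}$ and $(D-\lambda I)^{-1}$ is a \emph{finite} Neumann series whose $(a,b)$ entry is a sum over loop-free paths from $v_a$ to $v_b$ in $\bar S$ of products $\prod \omega(e_{\cdot\cdot})/(\lambda-\omega(e_{\cdot\cdot}))$. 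Multiplying by $B$ and $C$ prepends an edge from some $v_i\in S$ and appends an edge to some $v_j\in S$, and comparison with the branch product (\ref{eq0.9}) shows that the $(i,j)$ entry of $A-B(D-\lambda I)^{-1}C$ equals $\sum_{\beta\in\mathcal{B}_{ij}(G;S)}\mathcal{P}_\omega(\beta)=\mu(e_{ij})$; the branches with $m=2$ supply the $A$ term and those with interior vertices in $\bar S$ supply the $B(D-\lambda I)^{-1}C$ term. I expect this combinatorial matching — checking that the algebraic inverse reproduces exactly the branch sums — to be the most delicate part.

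With the Schur form established, the block-determinant factorization gives
\begin{equation*}
\det(M(G)-\lambda I)=\det(D-\lambda I)\cdot\det\big(M(\mathcal{R}_S(G))-\lambda I\big).
\end{equation*}
The same acyclicity shows the only permutation surviving in $\det(D-\lambda I)$ is the identity, so $\det(M(G|_{\bar S})-\lambda I)=\prod_{v_a\in\bar S}(\omega(e_{aa})-\lambda)$; this both recertifies invertibility and pins down the denominator. Solving yields
\begin{equation*}
\det\big(\mathcal{R}_S(G)-\lambda I\big)=\frac{\det(M(G)-\lambda I)}{\det(M(G|_{\bar S})-\lambda I)}.
\end{equation*}

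Finally I would convert this into the spectral identity. Writing each determinant in lowest terms as a ratio of coprime polynomials, the numerator zeros of $\det(M(G)-\lambda I)$ are $\sigma(G)$ and its denominator zeros are $\sigma^{-1}(G)$, and similarly for $G|_{\bar S}$; forming the quotient multiplies the first numerator by the second denominator, so that, before cancellation, the zeros of $\det(\mathcal{R}_S(G)-\lambda I)$ form the list $\sigma(G)\cup\sigma^{-1}(G|_{\bar S})$ while its poles include $\sigma(G|_{\bar S})$. The assertion of the theorem is that reducing to lowest terms removes from the numerator exactly the list $\sigma(G|_{\bar S})$, giving $\sigma(\mathcal{R}_S(G))=(\sigma(G)\cup\sigma^{-1}(G|_{\bar S}))-\sigma(G|_{\bar S})$. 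The care needed to verify that it is \emph{precisely} the factors counted by $\sigma(G|_{\bar S})$ which cancel, handling common factors correctly as lists with multiplicity, is the remaining bookkeeping obstacle, and it is here that the explicit product form of $\det(M(G|_{\bar S})-\lambda I)$ is essential.
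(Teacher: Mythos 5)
Your proposal is correct, and its skeleton is the paper's: realize the reduction as a Schur complement, factor the determinant via the block identity (\ref{eq1.2}) to get $\det(M(G)-\lambda I)=\det(M(G|_{\bar S})-\lambda I)\det(M(\mathcal{R}_S(G))-\lambda I)$, and read off the spectra from the numerator and denominator of $\det(M(G|_{\bar S})-\lambda I)$. Where you genuinely diverge is in the central identity equating the Schur complement with the branch-sum matrix of (\ref{eq1.0}). The paper proves this (Lemma \ref{lemma1.2}) by eliminating the vertices of $\bar S$ one at a time, verifying at each step that a single-vertex Schur complement merges branches correctly, and gluing the steps together with the commutativity Lemma \ref{lemma1.1}. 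You instead invert $(D-\lambda I)$ in one shot: writing $D-\lambda I=-\Lambda(I-\Lambda^{-1}D')$ with $D'$ strictly triangular in a topological order of the acyclic graph $\ell(G)|_{\bar S}$, the finite Neumann series expands $(D-\lambda I)^{-1}$ as a sum over loop-free paths inside $\bar S$, and sandwiching by $B$ and $C$ reproduces exactly the branch products (\ref{eq0.9}) — the signs work out since $-B(D-\lambda I)^{-1}C=B(I-\Lambda^{-1}D')^{-1}\Lambda^{-1}C$, and the enumeration is exact because walks in a DAG cannot repeat vertices. Your route is shorter and more self-contained for this one theorem; the paper's inductive route earns its extra length because Lemmas \ref{lemma1.1} and \ref{lemma1.2} are reused verbatim to prove the commutativity and uniqueness theorems on sequential reductions. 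One caveat: your final step, arguing that reducing the quotient to lowest terms cancels precisely the list $\sigma(G|_{\bar S})$ from the numerator, is asserted rather than verified — but the paper's own proof is exactly as terse at this point, so this is not a gap relative to the published argument.
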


The proof of theorem \ref{maintheorem} will be postponed until section 5. However, we note here that the characteristic polynomial of the matrix $M(G|_{\bar{S}})$ has a particularly simple form.

Since the only cycles the graph $G|_{\bar{S}}$ contains are loops then the following holds. The vertices of $G|_{\bar{S}}$ can be ordered such that the matrix $M(G|_{\bar{S}})$ is \textit{triangular} i.e. either all the entries below or all the entries above the main diagonal of $M(G|_{\bar{S}})$ are zero (see \textit{Frobenius normal form} \cite{Brualdi91}). Hence, the matrix $M(G|_{\bar{S}})-\lambda I$ is also triangular.

As the determinant of a triangular matrix is the product of its diagonal entries then
\begin{equation}\label{eq0.2}
 \det\big(M(G|_{\bar{S}})-\lambda I\big)=\prod_{v_i\in \bar{S}}\big(\omega(e_{ii})-\lambda\big).
\end{equation}
Because of the simplicity of equation (\ref{eq0.2}) both the spectrum $\sigma(G|_{\bar{S}})$ and inverse spectrum $\sigma^{-1}(G|_{\bar{S}})$ can be calculated with relative ease. Moreover, in light of equation (\ref{eq0.2}) theorem \ref{maintheorem} has the following corollary.

\begin{corollary}\label{cor1}
Let $S$ be a structural set of the graph $G\in\mathbb{G}$. If $M(G)\in\mathbb{C}^{n\times n}$ then\\
(i) $\sigma(G|_{\bar{S}})=\{\omega(e_{ii}): v_i\in \bar{S}\}$;\\
(ii) $\sigma^{-1}(G|_{\bar{S}})=\emptyset$; and\\
(iii) $\sigma(\mathcal{R}_S(G))=\sigma(G)-\sigma(G|_{\bar{S}})$.
\end{corollary}

\begin{figure}
  \begin{center}
    \begin{overpic}[scale=.5]{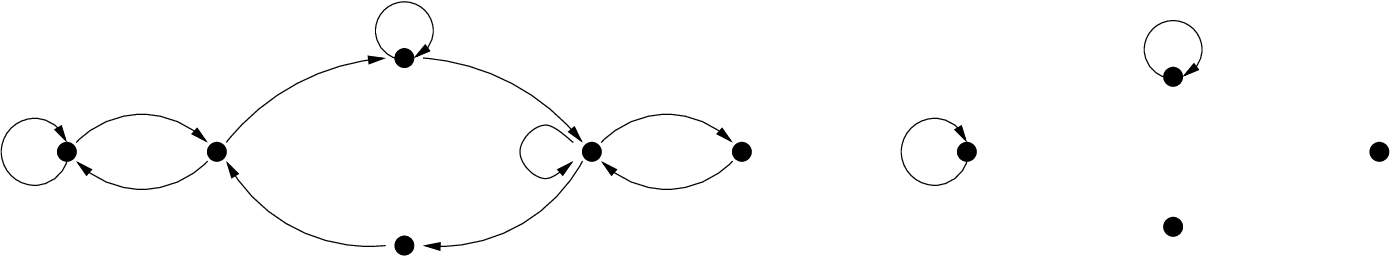}
    \put(28,-3){$G$}
    \put(4,10.5){$v_5$}
    \put(14,10.5){$v_1$}
    \put(71,8){$v_5$}
    \put(28.5,11.5){$v_2$}
    \put(28.5,2.5){$v_4$}
    \put(41,10.5){$v_3$}
    \put(95,8){$v_6$}
    \put(51,10.5){$v_6$}
    \put(83,10){$v_2$}
    \put(83,4){$v_4$}
    \put(82,-2.5){$G|_{\bar{S}}$}
    \end{overpic}
  \end{center}
  \caption{Restriction of the graph $G$ to $\bar{S}=\{v_2,v_4,v_5,v_6\}$.}\label{fig3}
\end{figure}

In applications the graphs (matrices) that are used often have real or positive weights (entries). If $G=(V,E,\omega)$ has complex valued weights and $S\in st(G)$ then corollary \ref{cor1} states the following. The spectrum of $\mathcal{R}_S(G)$ and $G$ differ at most by the spectrum of $G|_{\bar{S}}$. Moreover, the spectrum $\sigma(G|_{\bar{S}})$ can be described as follows.

For $v_i\in \bar{S}$ the weight $\omega(e_{ii})\in\mathbb{C}$ is the weight of the loop $e_{ii}$ if $e_{ii}$ is an edge of the graph $G$. In the case that $e_{ii}\notin E$ then $\omega(e_{ii})=0$. That is, $\sigma(G|_{\bar{S}})$ consists of all the weights of  the loops off the structural set $S$. This includes the weight(s) $0$ for all $v_i\in\bar{S}$ such that $e_{ii}\notin E$. Because of this, the set $\sigma(G|_{\bar{S}})$ can be easily read off the graph $G$. As an illustration we give the following example.

\begin{example}
Let $G$ be the graph considered in example \ref{ex3}. As previously shown the vertex set $S=\{v_1,v_3\}$ is a structural set of $G$. Moreover, $M(G)\in\mathbb{C}^{6\times 6}$. Hence, corollary \ref{cor1} allows us to quickly compute the eigenvalues of the reduced graph $\mathcal{R}_S(G)$ once the eigenvalues of $G$ are known.

As one can calculate, the eigenvalues of the graph $G$ are $\sigma(G)=\{2,-1,1,1,0,0\}$. The restriced graph $G|_{\bar{S}}$ shown in figure \ref{fig3} has loop weights $\omega(e_{22})=1$, $\omega(e_{44})=0$, $\omega(e_{55})=1$, and $\omega(e_{66})=0$. Then corollary \ref{cor1} implies that $\sigma(G|_{\bar{S}})=\{1,0,1,0\}$ (and $\sigma^{-1}(G|_{\bar{S}})=\emptyset$). Additionally, as $\sigma(\mathcal{R}_S(G))=\sigma(G)-\sigma(G|_{\bar{S}})$ then the spectrum of the reduced graph is $\sigma\big(\mathcal{R}_S(G)\big)=\{2,-1\}$.

That is, in the reduction process of $G$ over $S$ we lose the eigenvalues $\{0,0,1,1\}$. However, even if $\sigma(G)$ is unknown we still know the following. The set of eigenvalues $\sigma(G|_{\bar{S}})=\{0,0,1,1\}$ is the most by which $\sigma(\mathcal{R}_S(G))$ and $\sigma(G)$ can differ.
\end{example}

\begin{remark}
We note that it is possible when reducing a graph to maintain the graph's entire spectrum including multiplicities.
\end{remark}

\begin{example}\label{ex4}
Let $K_n=(V,E,\omega)$ be the complete graph on $n$ vertices with unit edge weights. That is, $V=\{v_1,\dots,v_n\}$ and
$$\omega(e_{ij})=
\begin{cases}
1 &i\neq j\\
0 &\text{otherwise}
\end{cases} \ \ \text{for} \ \ 1\leq i,j \leq n.
$$ (See figure \ref{fig4}, left for $n=6$.)

For any fixed $v_k\in V$ note that $V-\{v_k\}\in st(K_n)$. This follows from the fact that condition (i) of definition \ref{def1} is trivially satisfied by the vertex set $V-\{v_k\}$. Condition (ii) follows from the fact that $\omega(e_{kk})=0\neq\lambda$. Therefore, the reduced graph $\mathcal{R}_{V-\{v_k\}}(K_n)=(V-\{v_k\},\mathcal{E},\mu)$ is well defined having edges $e_{ij}$ with edges weights
$$\mu(e_{ij})=
\begin{cases}
1+1/\lambda &i,j\neq k\\
0 &\text{otherwise}
\end{cases} \ \ \text{for} \ \ 1\leq i,j \leq n.
$$ (See figure \ref{fig4}, right for $n=6$.)

To compute the spectrum $\sigma(K_n)$ note the following. As the matrix $M(K_n)+I$ is the $n\times n$ matrix of ones then $\sigma(M(K_n)+I)=\{n,0,\dots,0\}$ where $0$ has multiplicity $n-1$. Therefore, the graph $K_n$ has spectrum $\sigma(K_n)=\{n-1,-1,\dots,-1\}$ where $-1$ has multiplicity $n-1$. In particular, $0\notin\sigma(K_n)$.

Via equation (\ref{eq0.2}) the sets $\sigma(K_n|_{\{v_k\}})=\{0\}$ and $\sigma^{-1}(K_n|_{\{v_k\}})=\emptyset$ since the weight $\omega(e_{kk})=0$. As $\sigma(K_n|_{\{v_k\}})\cap\sigma(K_n)=\emptyset$ then theorem \ref{maintheorem} (or corollary \ref{cor1}) implies $\sigma(K_n)=\sigma(\mathcal{R}_{V-\{v_k\}}(K_n))$. That is, no eigenvalues are lost in reducing the graph $K_n$ to the graph $\mathcal{R}_{V-\{v_k\}}(K_n)$.
\end{example}

\begin{figure}
  \begin{center}
    \begin{overpic}[scale=.65]{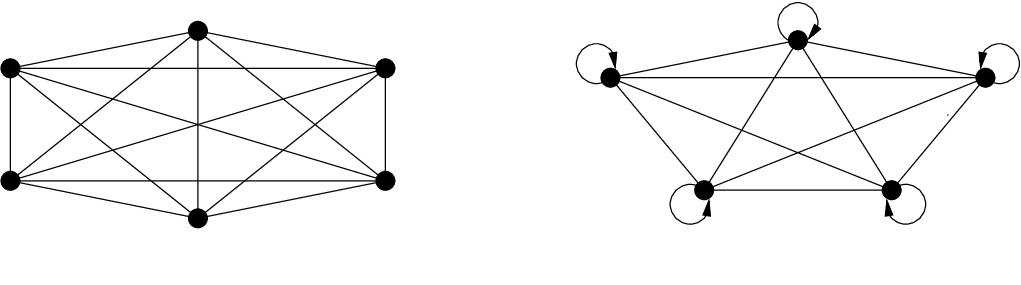}
    \put(18,2){$K_6$}
    \put(73,2){$\mathcal{R}_S(K_6)$}
    \put(-3,10){$v_1$}
    \put(64,10){$v_1$}
    \put(-3,21){$v_2$}
    \put(57.5,17.5){$v_2$}
    \put(15,25.5){$v_3$}
    \put(73,24.5){$v_3$}
    \put(39.5,21){$v_4$}
    \put(96,17.5){$v_4$}
    \put(39.5,10){$v_5$}
    \put(90,10){$v_5$}
    \put(15,5){$v_6$}
    \put(36,-1){$\sigma(K_6)=\sigma(\mathcal{R}_S(K_6))$}
    \end{overpic}
  \end{center}
  \caption{Reduction of $K_6$ over $S=\{v_1,\dots,v_5\}$. The edges of $K_6$ and $\mathcal{R}_S(K_6)$ have weight $1$ and $1+1/\lambda$ respectively.}\label{fig4}
\end{figure}

We note that both $\sigma(G|_{\bar{S}})$ and $\sigma^{-1}(G|_{\bar{S}})$ are easily calculated via equation (\ref{eq0.2}). Therefore, theorem \ref{maintheorem} offers a quick way of computing the eigenvalues of a reduced graph if the spectrum of the original unreduced graph is known.

More importantly, however, theorem \ref{maintheorem} tells us the eigenvalues we will gain and possibly lose if we reduce a graph. In section 6 this will allow us to modify the dynamical network $(\mathcal{F},X)$ while maintaining the network's stability.

\subsection{Sequential Reductions}
In section 3.2 we observed that any reduction $\mathcal{R}_S(G)$ of a graph $G\in\mathbb{G}$ is again a graph in $\mathbb{G}$. It is therefore natural to consider sequential reductions of a graph $G\in\mathbb{G}$. This requires that we first extend our notation to sequences of isospectral reductions.

For $G=(V,E,\omega)$ suppose $S_m\subseteq S_{m-1}\subseteq\dots\subseteq S_1\subseteq V$ such that $S_1\in st(G)$, $\mathcal{R}_1(G)=\mathcal{R}_{S_1}(G)$ and $$S_{i+1}\in st(\mathcal{R}_i(G)) \ \text{where} \ \mathcal{R}_{S_{i+1}}(\mathcal{R}_i(G))=\mathcal{R}_{i+1}(G), \  1\leq i\leq m-1.$$ If this is the case we say $S_1,\dots,S_m$ \textit{induces a sequence of reductions} on $G$ with \textit{final vertex set} $S_m$. By way of notation we write $\mathcal{R}_m(G)=\mathcal{R}(G;S_1,\dots,S_m)$ where $\mathcal{R}(G;S_1,\dots,S_m)$ denotes the graph $G$ reduced over the vertex set $S_1$ then $S_2$ and so on until $G$ is reduced over the final vertex set $S_m$.

\begin{theorem}\label{theorem3}
For $G=(V,E,\omega)$ suppose $S\in st(G)$. If there are vertex sets $S\subseteq S_{m-1}\subseteq\dots\subseteq S_{1}\subseteq V$ then $S_1,\dots,S_{m-1},S$ induces a sequence of reductions on $G$. Moreover, $\mathcal{R}(G;S_1,\dots,S_{m-1},S)=\mathcal{R}_{S}(G)$.
\end{theorem}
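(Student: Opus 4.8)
The plan is to collapse the entire chain to a single two-step identity and then induct on its length $m$. Concretely, I would first isolate the following two-step lemma: if $S_m\in st(G)$ and $S_m\subseteq T\subseteq V$, then (a) $T\in st(G)$, (b) $S_m\in st(\mathcal{R}_T(G))$, and (c) $\mathcal{R}_{S_m}(\mathcal{R}_T(G))=\mathcal{R}_{S_m}(G)$. Granting this, Theorem \ref{theorem3} follows by induction on $m$: the case $m=1$ is immediate, and in the inductive step one applies the lemma with $T=S_1$ to see that the first reduction is legitimate and that $S_m\in st(\mathcal{R}_{S_1}(G))$, then applies the inductive hypothesis to the graph $G'=\mathcal{R}_{S_1}(G)$ (whose vertex set is $S_1$) with the shortened chain $S_m\subseteq\dots\subseteq S_2\subseteq S_1$, and finally uses part (c) to identify $\mathcal{R}_{S_m}(G')=\mathcal{R}_{S_m}(G)$.

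Part (a) is routine: since $S_m\subseteq T$ we have $\bar T\subseteq\bar{S_m}$, so $\ell(G)|_{\bar T}$ is an induced subgraph of the acyclic $\ell(G)|_{\bar{S_m}}$ and hence acyclic, and the condition $\omega(e_{ii})\neq\lambda$ is inherited on the smaller set $\bar T$. For part (b) I would check the two structural-set conditions on $\mathcal{R}_T(G)$, whose vertex set is $T$ and whose complement of $S_m$ is $T-S_m\subseteq\bar{S_m}$. For acyclicity I would argue contrapositively: each edge of a nonloop cycle of $\ell(\mathcal{R}_T(G))|_{T-S_m}$ comes from a branch of $G$ with interior vertices in $\bar T$, so concatenating these branches yields a closed walk of $\ell(G)$ with all vertices in $\bar{S_m}$, which contains a directed cycle and contradicts $S_m\in st(G)$. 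For the loop-weight condition I would use that $\mathcal{R}_T(G)|_{T-S_m}$ is now known to be nonloop-acyclic, so by the reasoning behind (\ref{eq1.1}) its characteristic determinant factors as $\prod_{v_i\in T-S_m}\big(\mu(e_{ii})-\lambda\big)$; since this determinant equals the Schur complement of $(M(G)-\lambda I)|_{\bar T}$ in $(M(G)-\lambda I)|_{\bar{S_m}}$ and both blocks are invertible (from $T,S_m\in st(G)$), it is a nonzero element of $\mathbb{W}$, forcing each factor, hence each $\mu(e_{ii})\neq\lambda$.

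Part (c) is the algebraic heart, and I would establish it entirely through the Schur-complement description of reduction recorded after Theorem \ref{maintheorem}. Ordering $V$ as $\bar T,\,T-S_m,\,S_m$ writes $M(G)-\lambda I$ in $3\times 3$ block form; letting $P$ be the block on $\bar{S_m}=\bar T\cup(T-S_m)$ and $P_{11}$ the block on $\bar T$, reduction over $T$ is $(M(G)-\lambda I)/P_{11}$, whose $(T-S_m)$-block is exactly $P/P_{11}$, and reduction of $\mathcal{R}_T(G)$ over $S_m$ is then $\big((M(G)-\lambda I)/P_{11}\big)\big/\big(P/P_{11}\big)$. The Crabtree--Haynsworth quotient identity for Schur complements gives $\big((M(G)-\lambda I)/P_{11}\big)/(P/P_{11})=(M(G)-\lambda I)/P$, and the right-hand side is $M(\mathcal{R}_{S_m}(G))-\lambda I$ by definition; since the left-hand side is likewise $M(\mathcal{R}_{S_m}(\mathcal{R}_T(G)))-\lambda I$, cancelling the common $-\lambda I$ yields equality of adjacency matrices, hence of graphs. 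The invertibility needed to invoke (\ref{eq1.2}) and the quotient formula is supplied by (a), (b), and the multiplicativity $\det P=\det P_{11}\cdot\det(P/P_{11})$.

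I expect the main obstacle to be part (b) together with the bookkeeping that makes the quotient formula legitimately applicable: one must treat ``no loop weight equals $\lambda$'' as nonvanishing in the field $\mathbb{W}$ rather than at particular values of $\lambda$, and one must verify that the three-block ordering genuinely realizes $\mathcal{R}_T(G)|_{T-S_m}$ as the Schur complement $P/P_{11}$. Once reduction is identified with the Schur complement and the quotient formula is in hand, commutativity itself is essentially automatic; the real work lies in certifying that every intermediate set of the chain remains structural so that each reduction in the sequence is defined.
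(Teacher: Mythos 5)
Your proposal is correct and follows essentially the same route as the paper: its Lemma \ref{lemma1.2} identifies $\mathcal{R}_S(G)$ with the Schur complement $r(M(G)-\lambda I;\mathcal{I})$, and its Lemma \ref{lemma1.1} is precisely your two-step identity, proved by showing the intermediate block $A_1-C_0A_0^{-1}B_0$ stays upper triangular with nonzero diagonal (your part (b)) and then verifying the Schur-complement quotient formula by a direct block computation with the Woodbury identity rather than by citing Crabtree--Haynsworth. The only cosmetic difference is that you argue acyclicity of $\ell(\mathcal{R}_T(G))|_{T-S_m}$ graph-theoretically via concatenation of branches, while the paper works entirely at the level of triangular matrices.
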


Let $S$ be a structural set of the graph $G=(V,E,\omega)$. In light of theorem \ref{theorem3} any sequence $S_1,\dots,S_{m-1},S$ where
\begin{equation}\label{star}
S\subseteq S_{m-1}\subseteq\dots\subseteq S_{1}\subseteq V
\end{equation}
induces a sequence of reductions on $G$. To see how many possible sequences satisfy (\ref{star}) we note the following.

Each such sequence satisfying (\ref{star}) corresponds to a partition of the set $V-S$. To see this suppose $V=S_0$ and $S=S_m$. Then the sequence $S_1,\dots,S_{m-1},S_m$ corresponds to the partition $\{P_1,\dots,P_m\}$ of $V-S$ where $P_i=S_{i-1}-S_i$ for $1\leq i\leq m$.

Note that the number of ways to partition a set is exponential in its number of elements. Hence, there are often a large number of ways to sequentially reduce a graph to one of its structural sets. However, despite the potentially large number of such sequential reductions the following corollary of theorem \ref{theorem3} offers the following uniqueness result.

\begin{corollary}
Suppose $G=(V,E,\omega)$ has structural set $S$. If $S\subseteq S_{m-1}\subseteq\dots\subseteq S_{1}\subseteq V$ and $S\subseteq T_{n-1}\subseteq\dots\subseteq T_{1}\subseteq V$ then it follows from lemma \ref{theorem3} that $$\mathcal{R}(G;S_1,\dots,S_{m-1},S)=\mathcal{R}_{S}(G)=\mathcal{R}(G;T_1,\dots,T_{n-1},S).$$
\end{corollary}

That is, the graph $\mathcal{R}_{S}(G)$ resulting from a sequence of reductions depends only on the final vertex set $S$ if $S\in st(G)$.

\begin{example}
Let $G$ again be the graph considered in example \ref{ex3}. If we let both $S_1=\{v_1,v_2,v_3,v_4\}$ and $S_2=\{v_1,v_3\}$ then the following holds.

The vertex set $S_2$ is a structural set of $G$. In particular $S_2=S$ where $S$ is the structural set considered in example \ref{ex3}. Moreover, $S_2\subseteq S_1$. Hence, lemma \ref{theorem3} implies the sequence $S_1,S_2$ induces a sequence of reductions on $G$. Additionally, as $S_2=S$ then $\mathcal{R}(G;S_1,S_2)=\mathcal{R}_S(G)$ (see figures \ref{fig2} and \ref{fig5}).

We note that as $\bar{S}$ has four elements then there are fourteen ways to reduce $G$ to the graph $\mathcal{R}_S(G)$. This follows from the fact that there are fourteen ways to partition a set with four elements. However, each of these reductions will result in the same graph.
\end{example}

\begin{figure}
  \begin{center}
    \begin{overpic}[scale=.5]{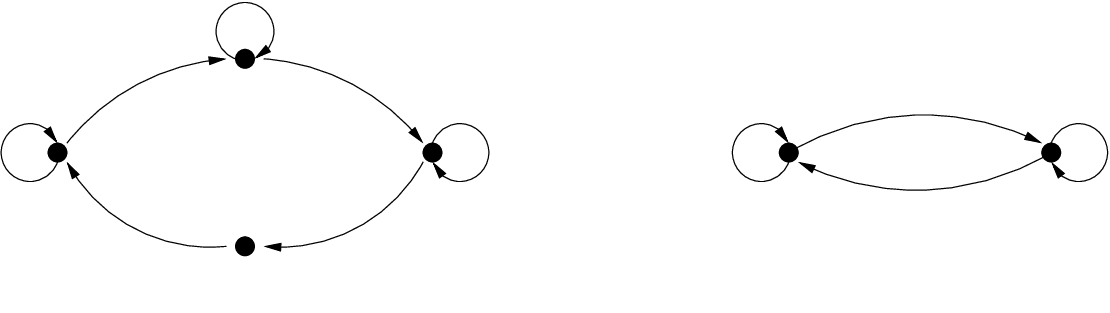}
    \put(15,-1.5){$\mathcal{R}(G;S_1)$}
    \put(-7,13.5){$\frac{1}{\lambda-1}$}
    \put(44.5,13.5){$\frac{\lambda+1}{\lambda}$}
    \put(4,18){$v_1$}
    \put(67,18){$v_1$}
    \put(37,18){$v_2$}
    \put(20.5,8){$v_4$}
    \put(20.5,19.5){$v_3$}
    \put(92.5,18){$v_3$}
    \put(79.5,20){{$\frac{1}{\lambda-1}$}}
    \put(81.5,6.5){$\frac{1}{\lambda}$}
    \put(59,13.5){$\frac{1}{\lambda-1}$}
    \put(100.5,13.5){$\frac{\lambda+1}{\lambda}$}
    \put(65,-1.5){$\mathcal{R}(G;S_1,S_2)=\mathcal{R}_S(G)$}
    \put(17,24){\small$1$}
    \put(11,21.5){\small$1$}
    \put(11,4.5){\small$1$}
    \put(31,21.5){\small$1$}
    \put(31,4.5){\small$1$}
    \end{overpic}
  \end{center}
  \caption{Sequential reduction of the graph $G$ from figure \ref{fig2}.}\label{fig5}
\end{figure}

Let $G=(V,E,\omega)$ and suppose $S\notin st(G)$. It is a natural question to ask whether the result(s) of lemma \ref{theorem3} holds for the vertex set $S$. Specifically, (i) do any sets satisfying $S\subseteq S_{m-1}\subseteq\dots\subseteq S_{1}\subseteq V$ induce a sequence of reductions on $G$; and (ii) can $G$ be reduced to a graph with vertex set $S$ via some sequence of reductions? If so, is this reduction unique?

The answer to (i) is \textit{negative} since $S_1$ need not be a structural set of $G$. To answer question (ii) we note the following. If the weight $\omega(e_{ii})\neq\lambda$ for some $v_i\in V$ then the vertex set $S=V-\{v_i\}$ is a structural set of $G$. This follows from the fact that $\bar{S}=\{v_i\}$. Hence, the graph $G|_{\bar{S}}$ is the graph restricted to the single vertex $v_i$. In particular, this implies that any cycle of $G|_{\bar{S}}$ is a loop.

Therefore, any graph $G\in\mathbb{G}$ can be reduced over the structural set $S=V-\{v_i\}$ if it is known that $\omega(e_{ii})\neq\lambda$. Another way to state this is that it is possible to remove the vertex $v_i$ from $G$ via an isospectral reduction if $\omega(e_{ii})\neq\lambda$ without knowing anything about the graph structure of $G$. This has the following important implication.

Suppose it is known that no loop of $G$ or any loop of any sequential reduction of $G$ has weight $\lambda$. If this is the case then it is possible to remove any sequence of single vertices from $G$ via a sequence of isospectral reductions. Therefore, $G$ can be sequentially reduced to a graph on any subset of its vertex set. This idea is the motivation behind the following.

For any polynomial $p\in\mathbb{C}[\lambda]$ let $deg(p)$ denote the degree of $p$. If $w=p/q\in\mathbb{W}[\lambda]$ where $p,q\in\mathbb{C}[\lambda]$ let $$\pi(w)=deg(p)-deg(q).$$

Again it is required to show that the function $\pi:\mathbb{W}[\lambda]\rightarrow\mathbb{Z}$ is well defined. To do so suppose $p/q$ and $r/s$ are equivalent in $\mathbb{W}[\lambda]$ i.e. $ps=rq$. We then have that $deg(ps)=deg(rq)$. As $deg(ps)=deg(p)+deg(s)$ and similarly $deg(rq)=\deg(r)+deg(q)$ then $$deg(p)-deg(q)=deg(r)-deg(s).$$ That is, $\pi(p/q)=\pi(r/s)$ implying the function $\pi$ is well defined.

Let $\mathbb{W}_\pi[\lambda]$ be the subset of $\mathbb{W}[\lambda]$ given by
$$\mathbb{W}_\pi[\lambda]=\big\{w\in\mathbb{W}[\lambda]:\pi(w)\leq 0\big\}.$$
That is, $\mathbb{W}_\pi[\lambda]$ is the set of rational functions in which the degree of the numerator is less than or equal to the degree of the denominator. Let $\mathbb{G}_\pi$ be the graphs in $\mathbb{G}$ with edge weights in the set $\mathbb{W}_\pi[\lambda]$.

\begin{lemma}\label{lemma1}
If $G\in\mathbb{G}_\pi$ and $S\in st(G)$ then $\mathcal{R}_S(G)\in\mathbb{G}_\pi$. In particular, no loop of $G$ and no loop of any reduction of $G$ can have weight $\lambda$.
\end{lemma}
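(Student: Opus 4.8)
The plan is to treat $\pi$ as a valuation-type invariant on the field $\mathbb{W}$ and track how it propagates through the reduction formula (\ref{eq0.9})--(\ref{eq1.0}). First I would record two elementary facts about $\pi$, viewed as $\deg(p)-\deg(q)$ for $\omega=p/q\in\mathbb{W}$ (this is well defined independently of the chosen representative, since a common factor $r$ contributes $\deg r$ to both degrees): namely $\pi(\omega_1\omega_2)=\pi(\omega_1)+\pi(\omega_2)$, and $\pi(\omega_1+\omega_2)\leq\max\{\pi(\omega_1),\pi(\omega_2)\}$ with equality whenever $\pi(\omega_1)\neq\pi(\omega_2)$ (so that the top-degree terms cannot cancel). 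These are precisely the statements that $-\pi$ is the valuation at infinity on $\mathbb{W}$.

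The heart of the argument is to estimate $\pi(\mathcal{P}_\omega(\beta))$ for a branch $\beta=v_1,\dots,v_m$. Since $G\in\mathbb{G}_\pi$ we have $\pi(\omega(e_{ii}))\leq 0$ for every loop, and since $\pi(\lambda)=1$ the hypothesis forces no cancellation of the leading term in each denominator $\lambda-\omega(e_{ii})$; hence $\pi(\lambda-\omega(e_{ii}))=1$. Combining this with $\pi(\omega(e_{i,i+1}))\leq 0$ and multiplicativity, each interior factor $\omega(e_{i,i+1})/(\lambda-\omega(e_{ii}))$ has $\pi\leq -1$. For $m>2$ there are $m-2\geq 1$ such factors, so $\pi(\mathcal{P}_\omega(\beta))\leq -(m-2)<0$; for $m=2$ the branch product is simply $\omega(e_{12})$ with $\pi\leq 0$. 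In either case $\pi(\mathcal{P}_\omega(\beta))\leq 0$.

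Finally, each reduced weight $\mu(e_{ij})$ in (\ref{eq1.0}) is a finite sum of branch products, so sub-additivity of $\pi$ under sums gives $\pi(\mu(e_{ij}))\leq\max_\beta\pi(\mathcal{P}_\omega(\beta))\leq 0$. As this holds for every entry of $M(\mathcal{R}_S(G))$, we conclude $\mathcal{R}_S(G)\in\mathbb{G}_\pi$. The ``in particular'' claim then follows at once: a weight equal to $\lambda$ would have $\pi=1>0$, which is impossible for any entry of a graph in $\mathbb{G}_\pi$; applying this to $G$ itself and, inductively via the first part, to every (sequential) reduction of $G$ rules out a loop of weight $\lambda$ anywhere.

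The main obstacle --- really the one place the hypothesis does essential work --- is the claim that $\pi(\lambda-\omega(e_{ii}))=1$ rather than merely $\leq 1$. This is where I must be careful: it requires the sharpened ``equality when degrees differ'' form of the sum estimate together with $\pi(\omega(e_{ii}))\leq 0$, so that the leading coefficient of $\lambda$ is never cancelled by a loop weight. Everything else is routine bookkeeping with the additive and sub-additive rules for $\pi$.
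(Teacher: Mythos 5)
Your proof is correct and follows essentially the same route as the paper's: establish that $\pi$ is subadditive under sums, additive under products, and strictly negative on $1/(\lambda-\omega)$ when $\pi(\omega)\leq 0$, then propagate these bounds through the branch products (\ref{eq0.9}) and the reduced weights (\ref{eq1.0}). If anything, your treatment of the key step is slightly more careful than the paper's, which asserts $\pi\bigl(1/(\lambda-w_1)\bigr)<\pi(w_1)$ where the correct (and sufficient) statement is simply $\pi\bigl(1/(\lambda-w_1)\bigr)=-1\leq 0$, exactly as you derive it.
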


By the reasoning above, if $G\in\mathbb{G}_\pi$ then $G$ can be (sequentially) reduced to a graph on any subset of its vertex set. This result is stated in the following theorem.

\begin{theorem}\label{theorem-1}\textbf{(Existence of Isospectral Reductions Over any Vertex Set)} Let $G=(V,E,\omega)$ be graph in $\mathbb{G}_\pi$ and suppose $\mathcal{V}$ is a nonempty subset of $V$. Then there exist sets $\mathcal{V}\subseteq S_{m-1}\subseteq\dots\subseteq S_1\subseteq V$ such that $S_1,\dots, S_{m-1},\mathcal{V}$ induces a sequence of reductions on $G$.
\end{theorem}

For $G\in\mathbb{G}_\pi$ it is therefore possible to reduce a graph $G\in\mathbb{G}_{\pi}$ to a graph on any (nonempty) subset of vertex set via some sequence of isospectral reductions. Moreover, such sequences have the following uniqueness property.

\begin{figure}
  \begin{center}
    \begin{overpic}[scale=.5]{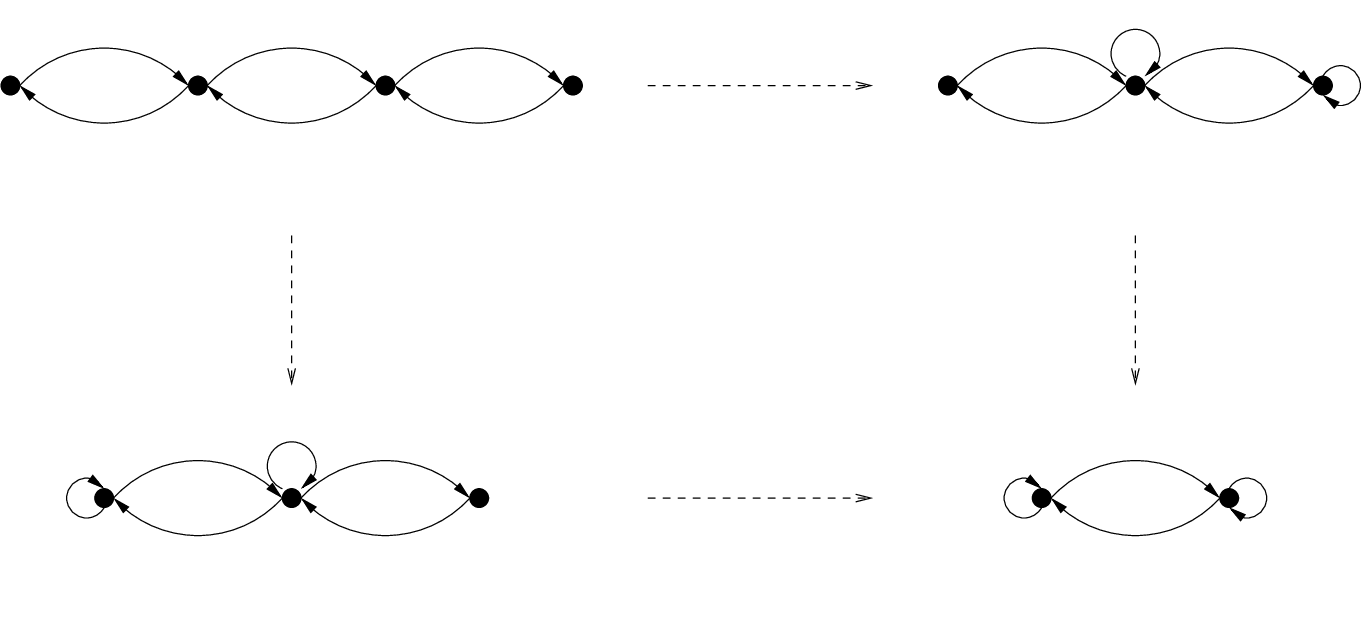}
    \put(20.25,30){$G$}
    \put(7,42.5){\tiny$1$}
    \put(21,42.5){\tiny$1$}
    \put(34.5,42.5){\tiny$1$}
    \put(7,34.5){\tiny$1$}
    \put(21,34.5){\tiny$1$}
    \put(34.5,34.5){\tiny$1$}
    \put(0,41){$v_1$}
    \put(13,41){$v_2$}
    \put(26.5,41){$v_3$}
    \put(40.5,41){$v_4$}

    \put(68,41){$v_1$}
    \put(82,36.5){$v_2$}
    \put(95,41){$v_4$}
    \put(76,42.5){\tiny$1$}
    \put(76,34.5){\tiny$1$}
    \put(81.5,44){\tiny$1/\lambda$}
    \put(88.5,42.5){\tiny$1/\lambda$}
    \put(88.5,34.5){\tiny$1/\lambda$}
    \put(101,38.5){$\frac{1}{\lambda}$}
    \put(75,30){$\mathcal{R}_{\{v_1,v_2,v_4\}}(G)$}

    \put(7,11){$v_1$}
    \put(20,6.5){$v_2$}
    \put(33,11){$v_4$}
    \put(12.5,12.5){\tiny$1/\lambda$}
    \put(12.5,4.5){\tiny$1/\lambda$}
    \put(19.5,14){\tiny$1/\lambda$}
    \put(28,12.5){\tiny$1$}
    \put(28,4.5){\tiny$1$}
    \put(2,8){$\frac{1}{\lambda}$}
    \put(13,0){$\mathcal{R}_{\{v_1,v_3,v_4\}}(G)$}

    \put(75.5,11){$v_1$}
    \put(88.5,11){$v_4$}
    \put(67,8.5){$\frac{\lambda}{\lambda^2-1}$}
    \put(94,8.5){$\frac{\lambda}{\lambda^2-1}$}
    \put(80,3){$\frac{1}{\lambda^2-1}$}
    \put(80,13.5){$\frac{1}{\lambda^2-1}$}
    \put(75,-1){$\mathcal{R}_{\{v_1,v_4\}}(G)$}

    \put(48,41){$\mathcal{R}_{\{v_1,v_2,v_4\}}$}
    \put(23,22){$\mathcal{R}_{\{v_1,v_3,v_4\}}$}
    \put(49,11){$\mathcal{R}_{\{v_1,v_4\}}$}
    \put(85,22){$\mathcal{R}_{\{v_1,v_4\}}$}

    \end{overpic}
  \end{center}
  \caption{Distinct sequences of isospectral reductions with the same outcome.}\label{fig6}
\end{figure}

\begin{theorem}\label{theorem-3}\textbf{(Uniqueness of Isospectral Reductions Over any Vertex Set)} Let $G=(V,E,\omega)$ be graph in $\mathbb{G}_\pi$ and suppose $\mathcal{V}$ is a nonempty subset of $V$. If $S_1,\dots, S_{m-1},\mathcal{V}$ and $T_1,\dots, T_{n-1},\mathcal{V}$ both induce a sequence of reductions on $G$ then $\mathcal{R}(G;S_1,\dots, S_{m-1},\mathcal{V})=\mathcal{R}(G;T_1,\dots, T_{n-1},\mathcal{V})$.
\end{theorem}

The results of theorem \ref{theorem-1} and theorem \ref{theorem-3} allows us to give the following definition.

\begin{definition}\label{note}
Let $G=(V,E,\omega)$ be graph in $\mathbb{G}_\pi$. If $\mathcal{V}\subseteq V$ is nonempty define
$$\mathcal{R}_{\mathcal{V}}[G]=\mathcal{R}(G;S_1,\dots, S_{m-1},\mathcal{V})$$
where $S_1,\dots, S_{m-1},\mathcal{V}$ is any sequence that induces a sequence of reductions on $G$ with final vertex set $\mathcal{V}$.
\end{definition}

The graph $\mathcal{R}_{\mathcal{V}}[G]$ is well defined as a result of theorems \ref{theorem-1} and \ref{theorem-3}. The notation $\mathcal{R}_\mathcal{V}[G]$ given in definition \ref{note} is intended to emphasize the fact that $\mathcal{V}$ need not be a structural set of $G$.

\begin{remark}
Note that $\pi(c)=0$ for any $c\in\mathbb{C}$. Hence, if $M(G)\in\mathbb{C}^{n\times n}$ then $G\in\mathbb{G}_\pi$. Therefore, any graph with complex weights can be uniquely reduced to a graph on any nonempty subset of its vertex set. This is of particular importance for the estimation of spectra of matrices with complex entries in \cite{BW09}.
\end{remark}

\begin{example}
Let $G=(V,E,\omega)$ be the graph shown in figure \ref{fig6}. Our goal is to reduce $G$ over its vertex set $\{v_1,v_4\}\subset V$. Note that as $G\in\mathbb{G}_\pi$ theorem \ref{theorem-1} guarantees that there is at least one sequence of reductions that reduces $G$ to the graph $\mathcal{R}_{\{v_1,v_4\}}[G]$.

In fact there are exactly two. This follows from the fact that $\{v_1,v_4\}\notin st(G)$. Hence, $G$ cannot be reduced over $\{v_1,v_4\}$ with a single reduction. However, any (nontrivial) reduction of $G$ removes at least one vertex from $G$.

Therefore, the two possible ways of reducing $G$ to the vertex set $\{v_1,v_4\}$ are
\begin{align}\label{Al1}
\mathcal{R}_{\{v_1,v_4\}}[G]&=\mathcal{R}(G;\{v_1,v_2,v_4\},\{v_1,v_4\}); \ \ \text{and}\\
\mathcal{R}_{\{v_1,v_4\}}[G]&=\mathcal{R}(G;\{v_1,v_3,v_4\},\{v_1,v_4\}).\label{Al2}
\end{align}

Both of the reductions given in (\ref{Al1}) and (\ref{Al2}) are shown in figure \ref{fig6}. The dashed arrows labeled $\mathcal{R}_T$ in this figure represent the reduction of a graph over some structural set $H$. This notation is meant to emphasize that this diagram commutes. That is,
$$\mathcal{R}_{\{v_1,v_4\}}\big(\mathcal{R}_{\{v_1,v_2,v_4\}}(G)\big)=\mathcal{R}_{\{v_1,v_4\}}\big(\mathcal{R}_{\{v_1,v_3,v_4\}}(G)\big)$$
as guaranteed by theorem \ref{theorem-3}.
\end{example}

\subsection{Equivalence Relations}

Theorem \ref{theorem-1} and theorem \ref{theorem-3} assert that a graph $G\in\mathbb{G}_\pi$ has a unique reduction to any (nonempty) subset of its vertex set via some sequence of isospectral reductions. In this section this property will allow us to define various equivalence relations on the graphs in $\mathbb{G}_{\pi}-\emptyset$.

\begin{figure}
  \begin{center}
    \begin{overpic}[scale=.45]{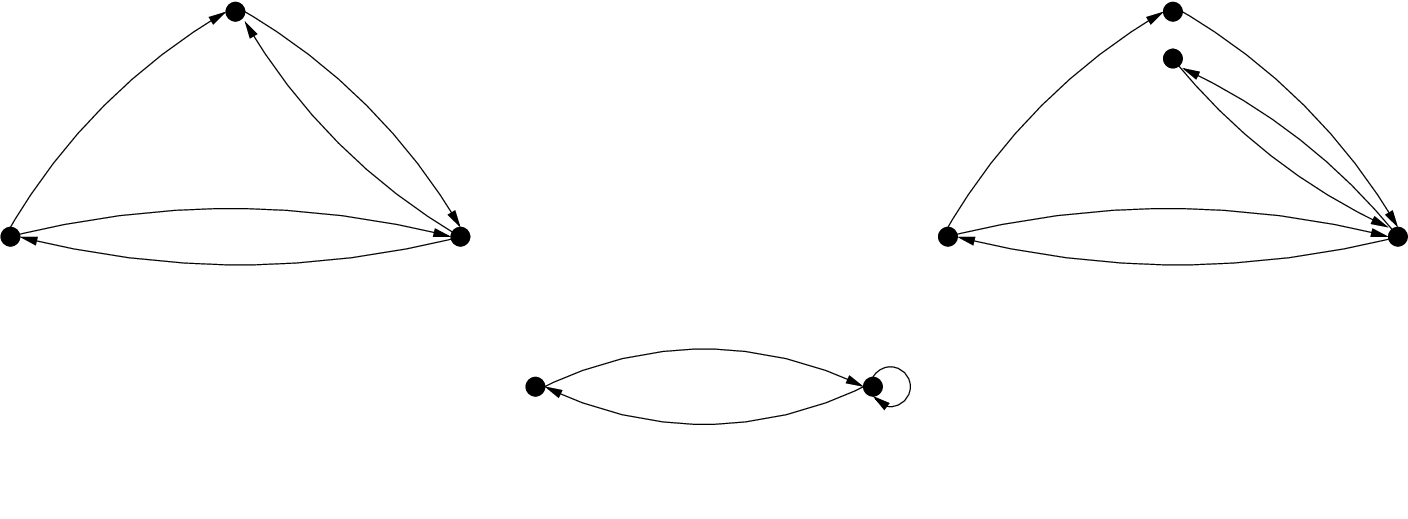}
    \put(-1,17){$v_1$}
    \put(32,17){$v_2$}
    \put(46,14){$1+\frac{1}{\lambda}$}
    \put(15,12){$G$}
    \put(82,12){$H$}
    \put(65.5,17){$v_1$}
    \put(98.5,17){$v_2$}
    \put(11.5,36){$v_3$}
    \put(78,36){$v_3$}
    \put(79,30){$v_4$}
    \put(66,8.5){$\frac{1}{\lambda}$}
    \put(49.5,3.5){$1$}
    \put(35,-1){$\mathcal{R}_{\tau(G)}[G]\simeq\mathcal{R}_{\tau(H)}[H]$}
    \put(37,6){$v_1$}
    \put(59.5,6){$v_2$}
    \end{overpic}
  \end{center}
  \caption{$G$ and $H$ are equivalent under the relation induced by the rule $\tau$ given in example \ref{ex0}.}\label{fig7}
\end{figure}

Two weighted digraphs $G_1=(V_1,E_1,\omega_1)$, and $G_2=(V_2,E_2,\omega_2)$ are \textit{isomorphic} if there is a bijection $b:V_1\rightarrow V_2$ such that there is an edge $e_{ij}$ in $G_1$ from $v_i$ to $v_j$ if and only if there is an edge $\tilde{e}_{ij}$ between $b(v_i)$ and $b(v_j)$ in $G_2$ with $\omega_2(\tilde{e}_{ij})=\omega_1(e_{ij})$. If the map $b$ exists it is called an \textit{isomorphism} and we write $G_1\simeq G_2$.

An isomorphism is essentially a relabeling of the vertices of a graph. Therefore, if two graphs are isomorphic then their spectra are identical.

\begin{theorem}\textbf{(Spectral Equivalence)}\label{theorem0.1}
Suppose for any graph $G=(V,E,\omega)$ in $\mathbb{G}_\pi-\emptyset$ that $\tau$ is a rule that selects a unique nonempty subset $\tau(G)\subseteq V$. Then $\tau$ induces an equivalence relation $\sim$ on the set $\mathbb{G}_\pi-\emptyset$ where $G\sim H$ if the graph $\mathcal{R}_{\tau(G)}[G]\simeq\mathcal{R}_{\tau(H)}[H]$.
\end{theorem}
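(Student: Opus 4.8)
The plan is to verify the three defining properties of an equivalence relation — reflexivity, symmetry, and transitivity — for the relation $\sim$ on $\mathbb{G}_\pi-\emptyset$, using graph isomorphism $\simeq$ as the intermediary. The crucial structural fact I would invoke is Theorem \ref{theorem-1}: since every $G\in\mathbb{G}_\pi-\emptyset$ and every nonempty $\tau(G)\subseteq V$ satisfy its hypotheses, the reduced graph $\mathcal{R}_{\tau(G)}[G]$ is well defined as a \emph{unique} graph. This uniqueness is what makes the rule $\tau$ yield a single representative for each $G$, so the relation is genuinely well posed before one even checks the axioms.

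First I would establish reflexivity. Given $G\in\mathbb{G}_\pi-\emptyset$, the rule $\tau$ selects $\tau(G)\subseteq V$ and produces the unique graph $\mathcal{R}_{\tau(G)}[G]$. Since any graph is isomorphic to itself via the identity map on its vertex set, we have $\mathcal{R}_{\tau(G)}[G]\simeq\mathcal{R}_{\tau(G)}[G]$, hence $G\sim G$. For symmetry, suppose $G\sim H$, so $\mathcal{R}_{\tau(G)}[G]\simeq\mathcal{R}_{\tau(H)}[H]$. Here I would note that $\simeq$ is itself symmetric: if $b$ is an isomorphism realizing the first relation, its inverse $b^{-1}$ (which exists because $b$ is a bijection and preserves edges and weights in both directions by the biconditional in the definition of isomorphism) realizes $\mathcal{R}_{\tau(H)}[H]\simeq\mathcal{R}_{\tau(G)}[G]$, giving $H\sim G$.

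The transitivity step is where the only real content lies. Suppose $G\sim H$ and $H\sim K$, so that $\mathcal{R}_{\tau(G)}[G]\simeq\mathcal{R}_{\tau(H)}[H]$ and $\mathcal{R}_{\tau(H)}[H]\simeq\mathcal{R}_{\tau(K)}[K]$. I would compose the two isomorphisms: if $b_1$ realizes the first and $b_2$ the second, then $b_2\circ b_1$ is a bijection between the vertex sets of $\mathcal{R}_{\tau(G)}[G]$ and $\mathcal{R}_{\tau(K)}[K]$ that preserves the edge relation and the weights, since each factor does. This yields $\mathcal{R}_{\tau(G)}[G]\simeq\mathcal{R}_{\tau(K)}[K]$, hence $G\sim K$. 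The key point here is that $\tau(H)$ is a \emph{single} well-defined subset, so the middle object $\mathcal{R}_{\tau(H)}[H]$ appearing in both hypotheses is literally the same graph; this is exactly what the uniqueness in Theorem \ref{theorem-1} guarantees, and it is what allows the two isomorphisms to be chained.

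In truth the argument reduces entirely to the observation that $\simeq$ is an equivalence relation on $\mathbb{G}_\pi$ (reflexive, symmetric, transitive as a relation of weighted-digraph isomorphism) and that $\tau$ assigns to each graph a unique representative $\mathcal{R}_{\tau(G)}[G]$; pulling back an equivalence relation along a well-defined map always produces an equivalence relation. I expect the \textbf{main obstacle} to be purely expository rather than mathematical: one must be careful that the relation is well \emph{defined}, i.e.\ that $\mathcal{R}_{\tau(G)}[G]$ is unambiguous, which is precisely where Theorem \ref{theorem-1} is essential and where the restriction to $\mathbb{G}_\pi-\emptyset$ (rather than all of $\mathbb{G}$) is needed.
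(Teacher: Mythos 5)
Your argument is correct, and the equivalence-relation verification at its core --- reflexivity, symmetry and transitivity of $\sim$ inherited from the corresponding properties of $\simeq$ once $G\mapsto\mathcal{R}_{\tau(G)}[G]$ is a well-defined map --- is essentially verbatim the argument the paper gives in the paragraph immediately following its displayed proof. The difference lies in where the weight of the proof falls. You delegate the well-definedness of $\mathcal{R}_{\tau(G)}[G]$ entirely to Theorem \ref{theorem-1}, whereas the paper devotes the whole body of its proof of Theorem \ref{theorem0.1} to establishing exactly that uniqueness: it rewrites an arbitrary sequence of reductions with final vertex set $\mathcal{V}$ as a sequence of single-vertex removals $\mathcal{R}em(G;\{v_1\},\dots,\{v_m\})$ (using Theorem \ref{theorem3}), invokes Lemma \ref{lemma-2} to show that transposing two adjacent removals does not change the resulting graph, and concludes that the removal order is immaterial, so any two sequences terminating in the same vertex set produce one and the same graph. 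In other words, the uniqueness half of Theorem \ref{theorem-1} is not proved separately earlier in the paper; its proof is folded into the proof of Theorem \ref{theorem0.1}. Citing Theorem \ref{theorem-1} is logically legitimate since it is stated beforehand, and your observation that the restriction to $\mathbb{G}_\pi-\emptyset$ is what makes the representative unambiguous is exactly right; just be aware that in this paper the mathematical substance behind the statement is precisely that uniqueness argument, which your proposal identifies as the essential ingredient but does not itself supply.
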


\begin{example}\label{ex0}
Let $G=(V,E,\omega)$. The \textit{out degree} of a vertex $v_i\in V$ is the number of outgoing edges incident to $v_i$ i.e. the number $|\{j:\omega(e_{ij})\neq 0\}|$. If $G\in\mathbb{G}_\pi$ let $\tau(G)\subseteq V$ be the set of vertices of maximal out degree.

Observe that for each graph $G\in\mathbb{G}_\pi-\emptyset$ the set $\tau(G)$ both exists and is unique. Thus the relation of having an isomorphic reduction with respect to this rule induces an equivalence relation on $\mathbb{G}_\pi-\emptyset$.

In figure \ref{fig7} the graphs $G$ and $H$ have the vertex set $\tau(G)=\{v_1,v_2\}=\tau(H)$ of maximal out degree. As shown in the figure, the graph $\mathcal{R}_{\tau(G)}[G]\simeq\mathcal{R}_{\tau(H)}[H]$. Hence, $G\sim H$ under the relation $\sim$ induced by the rule $\tau$.
\end{example}

Note that the relation of simply having isomorphic reductions is not transitive. That is, if $\mathcal{R}_S[G]\simeq\mathcal{R}_T[H]$ and $\mathcal{R}_U[H]\simeq\mathcal{R}_V[K]$ it is not necessarily the case that there are sets $X$ and $Y$, subsets of the vertex sets of $G$ and $K$ respectively, such that $\mathcal{R}_X[G]\simeq\mathcal{R}_Y[K]$.

As an example, in figure \ref{fig100} both $\mathcal{R}_S[G]\simeq\mathcal{R}_S[H]$ and $\mathcal{R}_T[H]\simeq\mathcal{R}_T[K]$ where $S=\{v_1,v_2\}$ and $T=\{v_3,v_4\}$. However, one can quickly check that for no subsets $X\subseteq S$ and $Y\subseteq T$ are $\mathcal{R}_X[G]\simeq\mathcal{R}_Y[K]$. Overcoming this intransitivity requires some rule $\tau$ that selects a unique set of vertices from each graph in $\mathbb{G}$ (see theorem \ref{theorem0.1}). If $\tau$ does not, e.g. $\tau:=$\textit{remove any vertex}, then $\tau$ does not induce an equivalence relation on $\mathbb{G}-\emptyset$.

\begin{figure}
  \begin{center}
    \begin{overpic}[scale=.55]{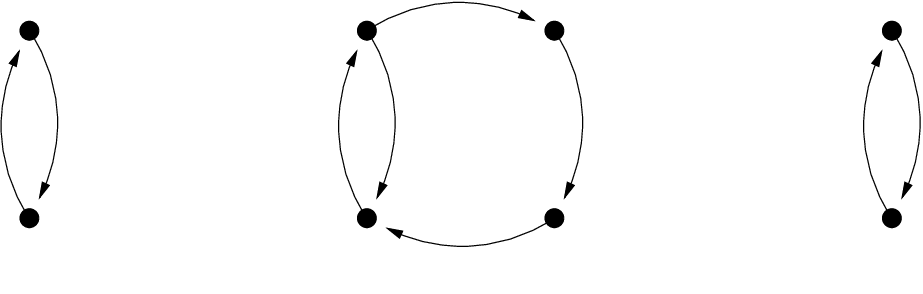}
    \put(1.5,30.5){$v_1$}
    \put(1.5,4){$v_2$}
    \put(-3.5,17){$1$}
    \put(8.5,17){$1+\frac{1}{\lambda^2}$}
    \put(1.5,-2){$G$}
    \put(49,-2){$H$}
    \put(38.5,30.5){$v_1$}
    \put(38.5,4){$v_2$}
    \put(60.5,30.5){$v_3$}
    \put(60.5,4){$v_4$}
    \put(95.5,30.5){$v_3$}
    \put(95.5,4){$v_4$}
    \put(95.5,-2){$K$}
    \put(102,17){$1$}
    \put(83,17){$\frac{1}{\lambda^2-1}$}
    \end{overpic}
  \end{center}
  \caption{$G=\mathcal{R}_S[H]$ and $\mathcal{R}_T[H]=K$ for $S=\{v_1,v_2\}$ and $T=\{v_3,v_4\}$ but the graphs $G$ and $K$ do not have isomorphic reductions.}\label{fig100}
\end{figure}

Importantly, choosing a rule that does select a unique vertex set allows one to study the graphs in $\mathbb{G}_\pi-\emptyset$ modulo some particular graph feature. For instance, in example \ref{ex0} this graph feature (or vertex set) is the vertices that do not have maximal out degree.

\section{Isospectral Transformations Over Fixed Weight Sets}

The isospectral graph reductions of the previous section modify not only the graph structure but also the weight set of a graph. That is, if $\mathcal{R}_S(G)=(S,\mathcal{E},\mu)$ is any reduction of $G=(V,E,\omega)$ then typically $\omega(E)\neq \mu(\mathcal{E})$, i.e.
$$\{\omega(e_{ij}):e_{ij}\in E\}\neq\{\mu(e_{ij}):e_{ij}\in \mathcal{E}\}.$$
This may lead one to assume that our procedure simply shifts the complexity of the graph's structure to its set of edge weights. However, this is not the case.

In this section we introduce different procedures of reducing and expanding a graph that restrict the weights of the transformed graph's edges to particular subsets of $\mathbb{W}[\lambda]$. As before, the procedure preserves the spectrum of the graph up to a known set. Such transformations are of particular importance in section 6 where dynamical network expansions are discussed.

\subsection{Branch Expansions}
The idea behind an isospectral graph transformation that preserves a graph's edge weights is simple enough. If two graphs $G,H\in\mathbb{G}$ have the same branch structure (including weights) then they should have similar spectra. If this is the case, then we say that $G$ is an \textit{isospectral transformation} of $H$ over its weight set and vice-versa.

To make this precise suppose $G=(V,E,\omega)$ and $S\in st(G)$. If the branch $\beta=v_{i_1},\dots,v_{i_m}\in\mathcal{B}_S(G)$ let $\Omega_G(\beta)$ be the ordered sequence $$\Omega_G(\beta)=\omega(e_{i_1i_2}),\dots,\omega(e_{i_{j-1},i_j}),\omega(e_{i_ji_j}),\omega(e_{i_j,i_{j+1}}),\dots,\omega(e_{i_{m-1},i_m}).$$ for $m>1$ and $\omega(e_{i_1i_1})$ if $m=1$. We call $\Omega_G(\beta)$ the \textit{weight sequence} of the branch $\beta$. Moreover, we let $\Omega_G(\beta)_{j,j+1}=\omega(e_{i_j,i_{j+1}})$ and $\Omega_G(\beta)_{jj}=\omega(e_{i_ji_j})$ for $1\leq j\leq m-1$.

Let $G,H\in\mathbb{G}$. Suppose $S=\{v_1,\dots,v_m\}$ is a structural set of both $G$ and $H$. The branch set $\mathcal{B}_{ij}(G;S)$ is \textit{isomorphic} to $\mathcal{B}_{ij}(H;S)$ if there is a bijection
$$b:\mathcal{B}_{ij}(G;S)\rightarrow\mathcal{B}_{ij}(H;S)$$
such that $\Omega_G(\beta)=\Omega_H(b(\beta))$ for each $\beta\in\mathcal{B}_{ij}(G;S)$. If such a map exists we write $\mathcal{B}_{ij}(G;S)\simeq\mathcal{B}_{ij}(H;S)$. If
$$\mathcal{B}_{ij}(G;S)\simeq\mathcal{B}_{ij}(H;S) \ \ \text{for each} \ \ 1\leq i,j\leq m$$
we say $\mathcal{B}_S(G)$ is \textit{isomorphic} to $\mathcal{B}_S(H)$ and write $\mathcal{B}_S(G)\simeq\mathcal{B}_S(H)$.

\begin{example}\label{ex9}
Suppose $H$ and $G$ are the graphs in figure \ref{fig9} with unit edge weights. The vertex set $S=\{v_1,v_3\}$ can be seen to be a structural set of both $H$ and $G$. Moreover,
\begin{align*}
\mathcal{B}_{11}(H;S)=\{v_1,v_2,v_1\},& \hspace{0.15in} \mathcal{B}_{11}(G;S)=\{v_1,u_5,v_1\};\\
\mathcal{B}_{13}(H;S)=\{v_1,v_2,v_3\},& \hspace{0.15in} \mathcal{B}_{13}(G;S)=\{v_1,u_2,v_3\};\\
\mathcal{B}_{31}(H;S)=\{v_3,v_4,v_1\},& \hspace{0.15in} \mathcal{B}_{31}(G;S)=\{v_1,u_4,v_3\};\\
\mathcal{B}_{33}(H;S)=\{v_3,v_3;v_3,v_4,v_3\},& \hspace{0.15in} \mathcal{B}_{33}(G;S)=\{v_3,v_3;v_3,v_u,v_3\}.
\end{align*}

Note that the branch $\beta=\{v_1,u_5,v_1\}$ in $\mathcal{B}_{11}(G;S)$ has the weight sequence given by $\Omega_G(\beta)=1,1,1$. Similarly, the branch $\gamma=\{v_1,v_2,v_1\}$ in $\mathcal{B}_{11}(H;S)$ has the weight sequence $\Omega_H(\gamma)=1,1,1$. Hence, $\mathcal{B}_{11}(H;S)\simeq\mathcal{B}_{11}(G;S)$. Continuing in this manner, one can check that each $\mathcal{B}_{ij}(H;S)\simeq\mathcal{B}_{ij}(G;S)$ for each $i,j\in\{1,3\}$. Therefore, $\mathcal{B}_S(H)\simeq\mathcal{B}_S(G)$.
\end{example}

\begin{remark}
We note that if $\mathcal{B}_S(H)\simeq\mathcal{B}_S(G)$ the graphs $G$ and $H$ need not be isomorphic (see example \ref{ex9}).
\end{remark}

For $G\in\mathbb{G}$ and $S\in st(G)$ suppose $\alpha=v_1,\dots,v_m$ and $\beta=u_1,\dots,u_n$ are branches in $\mathcal{B}_S(G)$. These branches are said to be \textit{independent} if $$\{v_2,\dots,v_{m-1}\}\cup\{u_2,\dots,u_{n-1}\}\neq \emptyset.$$
That is, $\alpha$ and $\beta$ are independent if they share no interior vertices.

\begin{definition}\label{def10}
Let $G,H\in\mathbb{G}$ and $S\in st(G),st(H)$. Suppose\\
(i) $\mathcal{B}_S(G)\simeq\mathcal{B}_T(H)$;\\
(ii) the branches of $\mathcal{B}_S(H)$ are independent;\\
(iii) each vertex of $G$ and $H$ belongs to a branch of $\mathcal{B}_S(G)$ and $\mathcal{B}_S(H)$ respectively.
Then we call $H$ a \textit{branch expansion} of $G$ with respect to $S$.
\end{definition}

\begin{proposition}\label{propnew}
Let $G\in\mathbb{G}$ and $S\in st(G)$. If $H$ and $K$ are branch expansions of $G$ with respect to $S$ then $H\simeq K$.
\end{proposition}

\begin{proof}
Suppose $H=(V_1,E_1,\omega_1)$ and $K=(V_2,E_2,\omega_2)$ are branch expansions of $G$ with respect to $S$. Then by assumption (i) of definition \ref{def10} it follows that there is a bijection $$b:\mathcal{B}_S(H)\rightarrow\mathcal{B}_S(G)$$
such that $\Omega_H(\beta)=\Omega_K(b(\beta))$ for all $\beta\in\mathcal{B}_S(H)$.

For the branch $\beta=v_{i_1},\dots,v_{i_m}$ let $\beta(j)=v_{i_j}$ for $1\leq j\leq m$. We call $j$ the index of $v_{i_j}$. By assumption each vertex $v\in V_1$ belongs to a branch of $\mathcal{B}_S(H)$ and the branches of $\mathcal{B}_S(H)$ are pairwise independent. Hence, either $v\in S$ or $v=\beta(i)$ for exactly one $\beta\in\mathcal{B}_S(H)$ and index $i$. Let $B:V_1\rightarrow V_2$ be the map
$$B(v)=
\begin{cases}
v \ \ &\text{if} \ \ v\in S\\
\big(b(\beta)\big)(i) \ \ &\text{if} \ \ v\notin S \ \ \text{and} \ \ v=\beta(i)
\end{cases}.
$$

By assumption each $v\in V_2$ is similarly either in $S$ or equal to $\beta(i)$ again for exactly one $\beta\in\mathcal{B}_S(K)$ and index $i$. Therefore, $B$ has the inverse
$$B^{-1}(v)=
\begin{cases}
v \ \ &\text{if} \ \ v\in S\\
\big(b^{-1}(\beta)\big)(i) \ \ &\text{if} \ \ v\notin S \ \ \text{and} \ \ v=\beta(i)
\end{cases}
$$ implying the map $B:V_1\rightarrow V_2$ is a bijection.

Let $e_{ij}\in E_1$. If $v_i,v_j\in S$ then $v_i,v_j\in\mathcal{B}_S(H)$ implying
$$\omega_1(e_{ij})=\Omega_H(v_i,v_j)=\Omega_K(v_{B(i)},v_{B(j)})=\omega_2(e_{B(i)B(j)}).$$
Suppose $v_i,v_j\notin S$. If $v_i\neq v_j$ then there are branches $\alpha_1=v_1,\dots,v_i,\dots,v_s$ and $\alpha_2=u_1,\dots,v_j,\dots,v_t$ in $\mathcal{B}_S(H)$ containing $v_i$ and $v_j$ respectively. Therefore, the sequence $v_1,\dots,v_i,v_j,\dots,u_t
\in\mathcal{B}_S(H)$. As the branches of $\mathcal{B}_S(H)$ are pairwise independent then $\alpha_1=\alpha_2$ and $j=i+1$. Hence,
$$\omega_1(e_{ij})=\Omega_H(\alpha_1)_{i,i+1}=\Omega_K(b(\alpha_1))_{i,i+1}=\omega_2(e_{B(i)B(j)}).$$
If $v_i=v_j$ then there is a unique $\beta\in\mathcal{B}_S(H)$ and index $k$ such that $v_i=\beta(k)$. Hence, $$\omega_1(e_{ij})=\Omega_H(\beta)_{kk}=\Omega_K(b(\beta))_{kk}=\omega_2(e_{B(i)B(j)}).$$

Supposing $v_i\in S$, $v_j\notin S$ or $v_i\notin S$, $v_j\in S$ a similar argument implies that $\omega_1(e_{ij})=\omega_2(e_{B(i)B(j)})$. Hence, $H\simeq K$.
\end{proof}

It follows that a branch expansion of $G$ with respect to $S$ is unique up to a labeling of vertices. Therefore, any two expansions of $G$ with respect to $S$ are isomorphic. In what follows we let $\mathcal{X}_S(G)$ be a representative of the class of branch expansions. By slightly abusing our terminology we call any representative $\mathcal{X}_S(G)$ \textit{the branch expansion} of $G$ with respect to $S$.

The principle idea behind a branch expansion is the following. If $G\in\mathbb{G}$ and $S\in st(G)$ then the set of branches $\mathcal{B}_S(G)$ is uniquely defined. However, there are typically many other graphs $H$ with the same branch structure as $G$, i.e. $S\in st (H)$ such that $\mathcal{B}_S(H)\simeq\mathcal{B}_S(G)$.

A branch expansion of $G$ over $S$ is then a graph $H=\mathcal{X}_S(G)$ with identical branch structure but with the following restriction: The branches of $\mathcal{B}_S(H)$ are pairwise independent and every vertex of $H$ belongs to a branch in $\mathcal{B}_S(H)$. That is, any vertex of $\bar{S}$ in $H$ is part of exactly one branch in $\mathcal{B}_S(H)$.

Hence, given a graph $G$ and structural set $S$ we can algorithmically construct the expansion $\mathcal{X}_S(G)$ as follows. Start with the vertices $S$. If $\beta\in\mathcal{B}_{ij}(G;S)$ then both $v_i,v_j\in S$. Construct a path (or cycle) from $v_i$ to $v_j$ with weight sequence $\Omega(\beta)$ with \textit{new} interior vertices. By new we mean vertices that do not already appear on the graph we are constructing. Repeat this for each $\beta\in\mathcal{B}_{ij}(G;S)$. The resulting graph is the branch expansion $\mathcal{X}_S(G)$. A branch expansion is our first example of an isospectral graph transformation that preserves the weight set of a graph.

\begin{theorem}\label{theorem-2}
Let $G=(V,E,\omega)$ with structural set $S$. Then the graph $G$ and its branch expansion $\mathcal{X}_S(G)$ have the same set of edge weights. Moreover,  $$\det\big(M(\mathcal{X}_S(G))-\lambda I \big)=\det\big(M(G)-\lambda I\big)\prod_{v_i\in V-S}\big(\omega(e_{ii})-\lambda\big)^{n_i-1}$$ where $n_i$ is the number of branches in $\mathcal{B}_S(G)$ containing $v_i$.
\end{theorem}

\begin{example}\label{ex10}
Consider the graph $G=(V,E,\omega)$ and $H=(\mathcal{V},\mathcal{E},\mu)$ in figure \ref{fig9} with unit edge weights. As demonstrated in example \ref{ex9}, if $S=\{v_1,v_3\}$ then the branch set $\mathcal{B}_S(G)\simeq\mathcal{B}_S(H)$. Moreover, it can be seen from figure \ref{fig9} the five branches of $\mathcal{B}_S(G)$ share interior vertices. That is, the branches of $\mathcal{B}_S(G)$ are pairwise independent. Lastly, each vertex of $G$ belongs to at least one branch of $\mathcal{B}_S(G)$.

\begin{figure}
  \begin{center}
    \begin{overpic}[scale=.55]{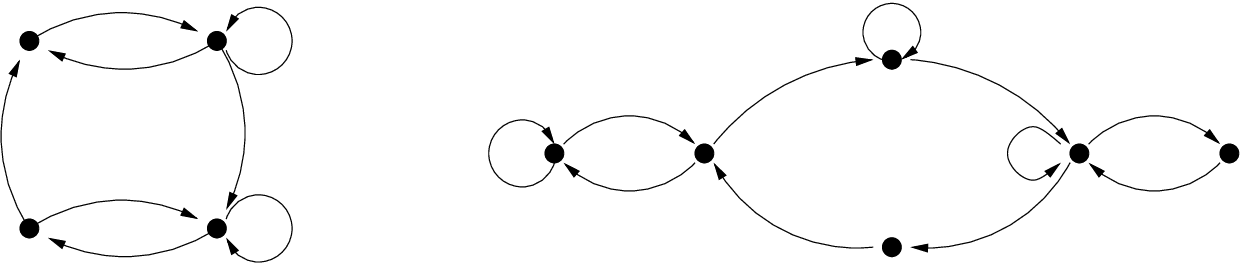}
    \put(0,20){$v_1$}
    \put(15,20){$v_2$}
    \put(15,-0.5){$v_3$}
    \put(0,-0.5){$v_4$}
    \put(9,-3){$H$}
    \put(43.5,11.5){$u_5$}
    \put(55,11.5){$v_1$}
    \put(70,13){$u_2$}
    \put(70,3){$u_4$}
    \put(85,11.5){$v_3$}
    \put(98,11.5){$u_6$}
    \put(71,-3){$G$}
    \end{overpic}
  \end{center}
  \caption{The graph $G$ is a branch expansion of the graph $H$ with respect to the structural set $S=\{v_1,v_3\}$.}\label{fig9}
\end{figure}

Therefore, the graph $G$ is a branch expansion of $H$ with respect to $S$, i.e. $G=\mathcal{X}_S(H)$. Importantly, note that the edge weights of $H$ and its expansion $G$ are identical, i.e. the sets $\omega(E)$ and $\mu(\mathcal{E})$ are both $\{0,1\}$.

Observe that the vertex $v_2$ of $H$ is an interior vertex of the branches $v_1,v_2,v_1;$ $v_1,v_2,v_3\in\mathcal{B}_S(H)$. Similarly, the vertex $v_4$ of $H$ is an interior vertex of the branches $v_3,v_4,v_3;$ $v_3,v_4,v_1\in\mathcal{B}_S(H)$. As $\omega(e_{22})=1$ and $\omega(e_{44})=0$ theorem \ref{theorem-2} implies that $\sigma\big(\mathcal{X}_S(H)\big)=\sigma(H)\cup\{1,0\}.$

We note that $G=\mathcal{X}_S(H)$ in figure \ref{fig9} is isomorphic to the graph $G$ in figure \ref{fig2}. Hence, $\sigma(G)=\{2,-1,1,1,0,0\}$ implying $\sigma(H)=\{2,-1,1,0\}$.
\end{example}

\subsection{Isospectral Graph Transformations Over Modified Weight Sets}
The branch expansions of the previous section are a method of graph transformation that separates the various branches of a graph. In this section we consider the reversal of this process. Specifically, we introduce a method of transforming a graph that incorporates a new technique of merging branches.

This type of isospectral transformation will have the additional property that it keeps the graph's weights in a fixed subset $\mathbb{U}\subset\mathbb{W}[\lambda]$. The particular type of subsets for which this will hold are semirings of $\mathbb{W}$.

The set $\mathbb{U}$ is a \textit{semiring} of $\mathbb{W}[\lambda]$ if it has the following properties. Both $0,1\in\mathbb{U}$ and for any $u_1,u_2\in\mathbb{U}$ both the product $u_1u_2$ and the sum $u_1+u_2$ are in $\mathbb{U}$. Moreover, as $\mathbb{U}\subset\mathbb{W}[\lambda]$ we are implicitly assuming that if $u\in\mathbb{U}$ then any other representation of $u$ is also in $\mathbb{U}$. Examples of such semirings include $\mathbb{C}[\lambda]$, $\mathbb{R}$, and $\mathbb{Z}^+=\{0,1,2,\dots\}$.

\begin{definition}
For $G=(V,E,\omega)$ let $S\in st(G)$. The set $S$ is a \textit{complete structural set} of $G$ if\\
(i) each cycle of $G$, including loops, contains a vertex in $S$; and\\
(ii) $\omega(e_{ii})\neq\lambda$ for each $v_i\in\bar{S}$.
\end{definition}

The difference between a structural set and a complete structural set of a graph $G$ is the following. Recall that if $S$ is a complete structural set of $G$ then every cycle of $G$ contains a vertex in $S$. If $S$ is simply a structural set of $G$ then loops of $G$ need not contain a vertex of $S$.

For $G\in\mathbb{G}$ let $st_0(G)$ denote the set of all complete structural sets of $G$. Additionally, let $\sigma_0(G)$ be the nonzero elements of $\sigma(G)$ including multiplicities. Hence, we refer to $\sigma_0(G)$ as the \textit{nonzero spectrum} of $G$. Moreover, we denote by $\rho(G)$ the \textit{spectral radius} of $G$. That is,
$$\rho(G)=\max_{\ell\in\sigma(G)}|\ell|.$$
The following is a corollary of theorem \ref{maintheorem}.

\begin{corollary}\label{cor100}
Let $G=(V,E,\omega)$ and suppose $S\in st_0(G)$. Then $$\sigma_0\big(\mathcal{R}_S(G)\big)=\sigma_0(G).$$
In particular, $\rho(\mathcal{R}_S(G)\big)=\rho(G)$.
\end{corollary}

\begin{proof}
Suppose $G=(V,E,\omega)$ and $S\in st_0(G)$. As no vertex in $\bar{S}$ can have a loop then $\omega(e_{ii})=0$ for each $v_i\in\bar{S}$. Equation (\ref{eq0.2}) then implies that
$$\det(M(G|_{\bar{S}})-\lambda I)=\prod_{v_i\in\bar{S}}\lambda.$$
Hence, $\sigma^{-1}(G|_{\bar{S}})=\emptyset$ and $\sigma(G|_{\bar{S}})=\{0,\dots,0\}$ in which $0$ has multiplicity $|\bar{S}|$. The corollary then follows from theorem \ref{maintheorem}.
\end{proof}

To simplify the discussion in what follows, suppose $G\in\mathbb{G}$ and $S\in st(G)$. If $\beta=v_{i_1},\dots,v_{i_m}\in\mathcal{B}_S(G)$ we will say $e_{i_j,i_{j+1}}$ is the $j$th edge \textit{belonging} to the branch $\beta$ for each $1\leq i\leq m-1$.

\begin{lemma}\label{newest}
Let $G=(V,E,\omega)$ and $S\in st_0(G)$. Then $\mathcal{X}_S(G)=(\mathcal{V},\mathcal{E},\mu)$ has the following properties.\\
(i) If $e_{ij}\in\mathcal{E}$ then $e_{ij}$ belongs to exactly one branch of $\mathcal{B}_S(\mathcal{X}_S(G))$.\\
(ii) If $\beta=v_1,\dots,v_m$ is a branch in $\mathcal{B}_S(\mathcal{X}_S(G))$ then
$$\Omega_{\mathcal{X}_S(G)}(\beta)=\mu(e_{12}),\dots,\mu(e_{k,k-1}),0,\mu(e_{k,k+1}),\dots,\mu(e_{m-1,m}).$$
\end{lemma}

\begin{proof}
As each vertex of $\mathcal{X}_S(G)$ belongs to at least one branch of $\mathcal{B}_S(\mathcal{X}_S(G))$ then the same holds any $e_{ij}\in\mathcal{E}$. On the other hand, suppose $e_{ij}$ belongs to both $\beta_1,\beta_2\in\mathcal{B}_S(\mathcal{X}_S(G))$. Then neither $v_i$ or $v_j$ can be interior vertices of $\beta_1$ or $\beta_2$ as these branches, if distinct, are independent. Hence, $\beta_1=\beta_2=v_i,v_j$ or $e_{ij}$ belongs to at most one branch of $\mathcal{B}_S(\mathcal{X}_S(G))$. This verifies property (i)

Since $S$ is a complete structural set of $G$ then $\omega(e_{ii})=0$ for each $v_i\in V-S$. This implies that, for any $\beta=v_1,\dots,v_m\in\mathcal{B}_S(G)$ the weight sequence $\Omega_{G}(\beta)$ has the form
$$\Omega_{G}(\beta)=\omega(e_{12}),\dots,\omega(e_{k,k-1}),0,\omega(e_{k,k+1}),\dots,\omega(e_{m-1,m}).$$
As $\mathcal{B}_S(G)\simeq\mathcal{B}_S(\mathcal{X}_S(G))$ property (ii) holds.
\end{proof}

\begin{center}
\textbf{The $\mathcal{Y}$-Construction: Branch Reweighting}
\end{center}

Given a branch expansion $\mathcal{X}_S(G)$ we construct a new graph $\mathcal{Y}_S(G)$ by reweighting the branches $\mathcal{B}_S(\mathcal{X}_S(G))$ of $\mathcal{X}_S(G)$. The idea behind this construction is to reweight the branches $\mathcal{B}_S(\mathcal{X}_S(G))$ in such a way that it preserves their branch products.

Suppose the expansion $\mathcal{X}_S(G)=(\mathcal{V},\mathcal{E},\mu)$ where $S\in st_0(G)$. Let the graph $\mathcal{Y}_S(G)=(\mathcal{V},\mathcal{E},\nu)$. That is, $\mathcal{Y}_S(G)$ has the same vertex and edge set as $\mathcal{X}_S(G)$ but possibly different edge weights. We note this implies that $S$ is a complete structural set of $\mathcal{Y}_S(G)$ and moreover that the branch set $\mathcal{B}_S(\mathcal{Y}_S(G))$ is identical to $\mathcal{B}_S(\mathcal{X}_S(G))$.

For the branch $\beta=v_1,\dots,v_m\in\mathcal{B}_S(\mathcal{Y}_S(G))$ define the weight sequence of $\beta$ to be
\begin{equation}\label{eq.new}
\displaystyle{\Omega_{\mathcal{Y}_S(G)}(\beta)=\prod_{k=1}^{m-1}\mu(e_{k,k+1}),0,\dots,1,0,1,\dots,0,1}
\end{equation}
if $m>1$. If $m=1$ let $\Omega_{\mathcal{Y}_S(G)}(\beta)=\mu(e_{11})$. As $\mathcal{X}_S(G)$ and $\mathcal{Y}_S(G)$ have the same vertex and edge set then lemma \ref{newest} implies that each edge of $\mathcal{Y}_S(G)$ belongs to exactly one branch of $\mathcal{B}_S(\mathcal{Y}_S(G))$. Therefore, equation (\ref{eq.new}) completely specifies the edge weights of the graph $\mathcal{Y}_S(G)$.

In particular, the edge $e_{ij}\in\mathcal{E}$ in $\mathcal{Y}_S(G)$ has weight $\nu(e_{ij})=1$ unless $e_{ij}$ is the first edge of a branch in $\mathcal{B}_S(\mathcal{Y}_S(G))$. If $e_{ij}$ happens to be the first edge of the branch $\beta\in\mathcal{B}_S(\mathcal{Y}_S(G))$ then its weight is the product of the nonzero entries of $\Omega_{\mathcal{X}_S(G)}(\beta)$.

Considering part (ii) of lemma \ref{newest}, $\mathcal{Y}_S(G)$ is effectively the graph $\mathcal{X}_S(G)$ in which the first edge weight of each branch is the product of that branches original weights in $\mathcal{X}_S(G)$. Every other edge of $\mathcal{Y}_S(G)$ is given weight 1. (Figure \ref{fig10} gives an example of this branch reweighting.)

\begin{center}
\textbf{The $\mathcal{Z}$-Construction: Branch Merging}
\end{center}

From the branch reweighting $\mathcal{Y}_S(G)$ we construct the graph $\mathcal{Z}_S(G)$. The major idea behind this construction is that the branches $\mathcal{B}_S(\mathcal{Y}_S(G))$ of the graph $\mathcal{Y}_S(G)$ can be merged together in a way that maintains the weight set of the graph as well as its nonzero spectrum.

For $G=(V,E,\omega)$ suppose $S=\{v_1,\dots,v_m\}$ is a structural set of $G$. Let
$$\mathcal{B}_j(G;S)=\bigcup_{1\leq i\leq m}\mathcal{B}_{ij}(G;S).$$ That is, $\mathcal{B}_j(G;S)$ are the branches in $\mathcal{B}_S(G)$ terminating at the vertex $v_j$. For any $\beta=u_1,\dots,u_k$ in $\mathcal{B}_S(G)$ let $|\beta|=k$, i.e. the number of vertices in $\beta$. Moreover, let $\{\beta\}_{int}$ denote the set of interior vertices of $\beta$.

Suppose the graph $\mathcal{Y}_S(G)=(\mathcal{V},\mathcal{E},\nu)$ and $S=\{v_1,\dots,v_m\}\in st_0(G)$. For each $v_j\in S$ select a branch $\beta^j\in\mathcal{B}_j(\mathcal{Y}_S(G))$ with the property that $|\beta^j|\geq |\beta|$ for all branches $\beta\in\mathcal{B}_S(\mathcal{Y}_S(G))$. We set $\beta^j=\emptyset$ if $\mathcal{B}_j(\mathcal{Y}_S(G))=\emptyset$.

Denoting $B=\mathcal{B}_S(\mathcal{Y}_S(G))-\{\beta^1,\dots,\beta^m\}$ let
\begin{equation}\label{eqeq}
\mathcal{U}=\bigcup_{\beta\in B}\{\beta\}_{int}.
\end{equation}
That is, $\mathcal{U}$ is the set of interior vertices of the branches $\beta\neq\{\beta^1,\dots,\beta^m\}$ in $\mathcal{B}_S(\mathcal{Y}_S(G))$. Let the graph $$\mathcal{Z}^{\prime}_S(G)=\mathcal{Y}_S(G)|_{\mathcal{V}-\mathcal{U}}.$$
Recall that the branches of $\mathcal{B}_S(\mathcal{Y}_S(G))$ are pairwise independent. Therefore, the graph $\mathcal{Z}^{\prime}_S(G)=(\mathcal{V}-\mathcal{U},\mathcal{E}^\prime,\nu^\prime)$ where $e\in\mathcal{E}^\prime$ if and only if $e$ belongs to some $\beta^j$. Furthermore, the edge weights of $\mathcal{Z}^\prime_S(G)$ are given by the restriction $\nu^\prime=\nu|_{\mathcal{E}^\prime}$.

If $e$ is an edge from the vertex $a$ to the vertex $b$ we will denote this by $e=(a,b)$. Suppose the branch $\beta^j=v_1^j,\dots,v_k^j$. For each $$\beta\in\mathcal{B}_{ij}(\mathcal{Y}_S(G);S)-\beta^j$$
we add an edge $(v_i,v_{k-|\beta|+2}^j)$ to the graph $\mathcal{Z}_S^\prime(G)$. The edge $(v_i,v_{k-|\beta|+2}^j)$ is given the weight of the first edge belonging to $\beta$ in $\mathcal{Y}_S(G)$. If this is done over all $1\leq i,j\leq m$ we call the resulting graph $\mathcal{Z}^{\prime\prime}_S(G)$.

It is important to note that the graph $\mathcal{Z}^{\prime\prime}_S(G)$ may have parallel edges. By \textit{parallel edges} we mean that there may be multiple edges in the edges set of $\mathcal{Z}^{\prime\prime}_S(G)$ of the form $(a,b)$. In particular, if two branches $\beta_1,\beta_2\in\mathcal{B}_{ij}(\mathcal{Y}_S(G);S)$ have the same length, i.e. $|\beta_1|=|\beta_2|=\ell$, then there are (at least) two edges in $\mathcal{Z}^{\prime\prime}_S(G)$ of the form $(v_i,v_{\ell-|\beta|+2}^j)$.

Suppose $\mathcal{Z}_S^{\prime\prime}(G)$ has parallel edges $e_1,\dots,e_N$ of the form $(v_i,v_j)$ with weights $w_1,\dots,w_N$. We replace the edges $e_1,\dots,e_N$ in $\mathcal{Z}_S^{\prime\prime}(G)$ with the single edge $e_{ij}$ having weight $w_1+\dots+w_N$. If this is done for each set of parallel edges in $\mathcal{Z}_S^{\prime\prime}(G)$ we denote the resulting graph by $\mathcal{Z}_S(G)$.

Note that our construction of $\mathcal{Z}_S(G)$ depends on the initial choice of each $\beta_j$. We therefore write $\mathcal{Z}_S(G)=\mathcal{Z}_S(G;\beta^1,\dots,\beta^m)$.

\begin{theorem}\label{theoremunital}
Let $G=(V,E,\omega)$ and $S\in st_0(G)$. Suppose the edge weights of $G$ are in the semiring $\mathbb{U}\subseteq\mathbb{W}[\lambda]$. Then $\mathcal{Z}_S(G)$ has edge weights in $\mathbb{U}$. Moreover, $\sigma_0(\mathcal{Z}_S(G))=\sigma_0(G)$
\end{theorem}

\begin{figure}
  \begin{center}
    \begin{overpic}[width=4.75in, height=1.65in]{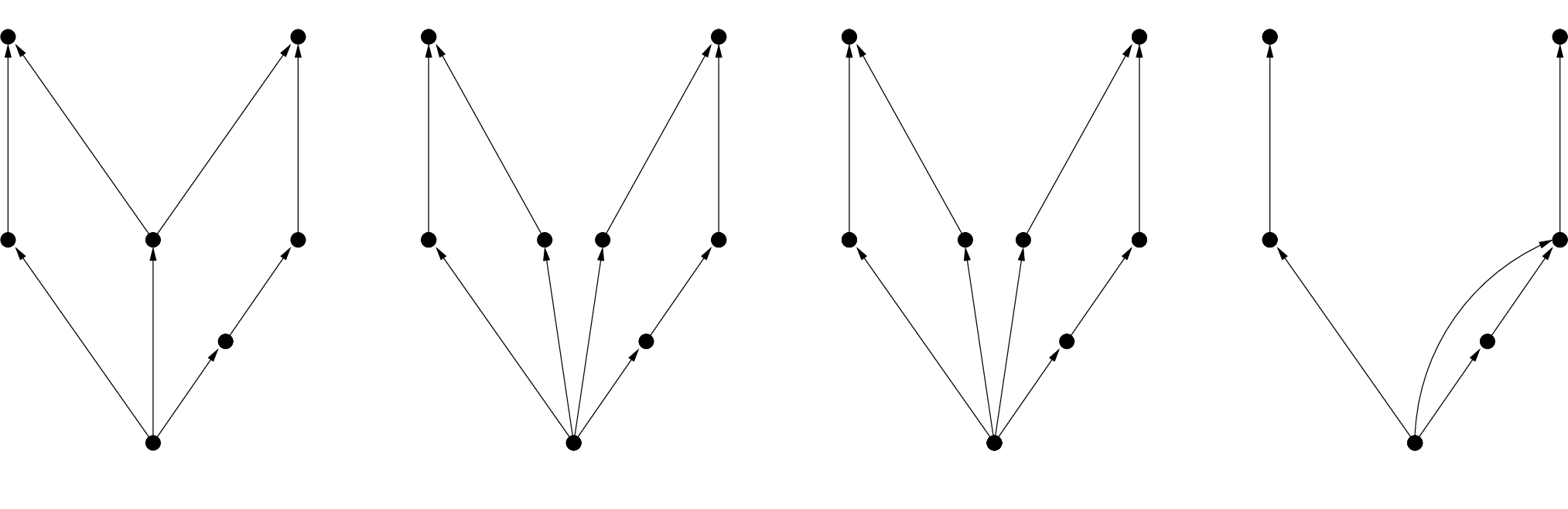}
    \put(9,2){$v_1$}
    \put(35.5,2){$v_1$}
    \put(62.5,2){$v_1$}
    \put(89.5,2){$v_1$}

    \put(15,9){$v_2$}
    \put(41.5,9){$v_2$}
    \put(68.5,9){$v_2$}
    \put(95.5,9){$v_2$}

    \put(-1.5,15.5){$v_3$}
    \put(25.5,15.5){$v_3$}
    \put(52.5,15.5){$v_3$}
    \put(79,15.5){$v_3$}

    \put(6,18){$v_4$}

    \put(19,15.5){$v_5$}
    \put(45.5,15.5){$v_5$}
    \put(72.5,15.5){$v_5$}
    \put(99.5,15.5){$v_5$}

    \put(-1,33.5){$v_6$}
    \put(25.5,33.5){$v_6$}
    \put(52.5,33.5){$v_6$}
    \put(80,33.5){$v_6$}

    \put(18.5,33.5){$v_7$}
    \put(45,33.5){$v_7$}
    \put(72,33.5){$v_7$}
    \put(99,33.5){$v_7$}

    \put(31,18){$u_1$}
    \put(39.5,18){$u_2$}
    \put(58,18){$u_1$}
    \put(66.5,18){$u_2$}

    \put(3,10){\small$1$}
    \put(7.5,12){\small$2$}
    \put(12.5,6){\small$3$}
    \put(16.5,12.5){\small$4$}
    \put(-1.5,24){\small$5$}
    \put(5,26.5){\small$6$}
    \put(13.5,26.5){\small$7$}
    \put(20,24){\small$8$}

    \put(29.5,10){\small$1$}
    \put(33.5,12.5){\small$2$}
    \put(38.5,12.5){\small$2$}
    \put(39.5,6){\small$3$}
    \put(43,12.5){\small$4$}
    \put(25,24){\small$5$}
    \put(31.5,26.5){\small$6$}
    \put(41,26.5){\small$7$}
    \put(46.5,24){\small$8$}

    \put(56.5,10){\small$5$}
    \put(59,12.5){\small$12$}
    \put(65,12.5){\small$14$}
    \put(66,5.5){\small$96$}
    \put(70.5,12.5){\small$1$}
    \put(52,24){\small$1$}
    \put(58.5,26.5){\small$1$}
    \put(67.5,26.5){\small$1$}
    \put(73.5,24){\small$1$}

    \put(82,10){\small$17$}
    \put(93,5.5){\small$96$}
    \put(97.5,12.5){\small$1$}
    \put(79,24){\small$1$}
    \put(100.5,24){\small$1$}
    \put(89.5,12.5){\small$14$}

    \put(8.5,-2.5){$G$}
    \put(33,-2.5){$\mathcal{X}_S(G)$}
    \put(60,-2.5){$\mathcal{Y}_S(G)$}
    \put(87,-2.5){$\mathcal{Z}_S(G)$}
    \end{overpic}
  \end{center}
  \caption{The isospectral transformation $\mathcal{Z}_S(G)$ of $G$ over $S=\{v_1,v_2,v_3\}$.}\label{fig10}
\end{figure}

\begin{example}\label{ex12}
Let $G=(V,E,\omega)$ be the graph shown in figure \ref{fig10} (far left). Note that the vertex set $S=\{v_1,v_2,v_3\}$ is a complete structural set of $G$ since $G$ has no cycles (including loops). The branches of $G$ with respect to $S$ are then $\beta_1=v_1,v_3,v_6$, $\beta_2=v_1,v_4,v_6$, $\beta_3=v_1,v_4,v_7$, and $\beta_4=v_1,v_2,v_5,v_7$. Observe that only $\beta_2$ and $\beta_3$ share an interior vertex.

The graph $\mathcal{X}_S(G)$, shown in figure \ref{fig10} (middle left), is then the graph in which the branches $\beta_2$ and $\beta_3$ have been replaced with the independent branches $\tilde{\beta}_2=v_1,u_1,v_6$ and $\tilde{\beta}_3=v_1,u_2,v_7$ respectively. Thus, the expansion $\mathcal{X}_S(G)$ has the vertex set $\mathcal{V}=\{v_1,v_2,v_3,v_5,v_6,v_7,u_1,u_2\}$.

Note that the products of the weights along the branches $\beta_1$, $\tilde{\beta}_2$, $\tilde{\beta}_3$, $\beta_4$ are $5, 12, 14, 96$ respectively. Hence, these are the first edge weights of each of the branches $\beta_1$, $\tilde{\beta}_2$, $\tilde{\beta}_3$, $\beta_4$ in $\mathcal{Y}_S(G)$ respectively (see figure \ref{fig10}, middle right). Each other edge weight of $\mathcal{Y}_S(G)$ is $1$.

To construct $\mathcal{Z}^\prime_S(G)$ note that $\beta^1=\emptyset$, $\beta^7=\beta_4$, and $\beta^6$ is either $\beta_1$ or $\tilde{\beta}_2$. Here we make the arbitrary choice of letting $\beta^6=\beta_1$. As the sets $\{\tilde{\beta}_2\}_{int}=\{u_1\}$ and $\{\tilde{\beta}_3\}_{int}=\{u_2\}$ then $\mathcal{Z}^\prime_S(G)$ has vertex set $\mathcal{V}-\{u_1,u_2\}=\{v_1,v_2,v_3,v_5,v_6,v_7\}$. Since the branch set $$\mathcal{B}_S(\mathcal{Z}^{\prime}_S(G))-\{\beta^6,\beta^7\}=\{\tilde{\beta}_2,\tilde{\beta}_3\}$$ then constructing $\mathcal{Z}_S^{\prime\prime}(G)$ amounts to adding two edges to $\mathcal{Z}_S^{\prime}(G)$; one for $\tilde{\beta}_2$ and one for $\tilde{\beta}_3$.

Note that $\tilde{\beta}_2\in\mathcal{B}_{16}(\mathcal{Y}_S(G);S)$ with first edge weight $12$ and $|\tilde{\beta}_2|=3$. As $|\beta^6|=3$ then to the graph $\mathcal{Z}_S^{\prime}(G)$ we add a parallel edge from $v_1$ to $v_3$ with weight $12$. For the branch $\tilde{\beta}_3\in\mathcal{B}_{17}(\mathcal{Y}_S(G);S)$ note that it has first edge weight $14$ and $|\tilde{\beta}_2|=3$. Since $|\beta^7|=4$ then to the graph $\mathcal{Z}_S^{\prime}(G)$ we add an edge from $v_1$ to $v_5$ with weight $14$. The result is the graph $\mathcal{Z}_S^{\prime\prime}(G)$.

Note that the two parallel edges from $v_1$ to $v_3$ in $\mathcal{Z}_S^{\prime\prime}(G)$ have weights $5$ and $12$ respectively. Hence, in $\mathcal{Z}_S(G)=\mathcal{Z}_S(G;\beta_1,\beta_4)$ shown in figure \ref{fig10} (far right) the edge $e_{13}$ has weight $17$. Moreover, the edge weights of $G$ are in the semiring of nonnegative integers $\mathbb{Z}^+$. Therefore, as guaranteed by theorem \ref{theoremunital}, the graph $\mathcal{Z}_S(G)$ also has edge weights in $\mathbb{Z}^+$. One can also compute that $\sigma_0(\mathcal{Z}_S(G)=\sigma_0(G)$.
\end{example}

For a graph $G=(V,E,\omega)$ let $|G|=|V|$, i.e. the number of vertices in $V$.

\begin{proposition}\label{prop1}
For $G=(V,E,\omega)$ let $S=\{v_1,\dots,v_m\}$ be a complete structural set of $G$. Then $$|\mathcal{Z}_S(G)|=|S|+\sum_{i=1}^m(|\beta^j|-2)$$
where $|\beta^j|=2$ if $\beta^j=\emptyset$.
\end{proposition}

\begin{proof}
Let $\mathcal{Y}_S(G)=(\mathcal{V},\mathcal{U},\nu)$. Suppose $\alpha,\beta\in\mathcal{B}_S(\mathcal{Y}_S(G))$ and $\alpha\neq\beta$. Note that the branches of $\mathcal{B}_S(\mathcal{Y}_S(G))$ are pairwise independent and each vertex of $\mathcal{V}$ belongs to a branch of $\mathcal{B}_S(\mathcal{Y}_S(G))$. Hence, $\{\alpha\}_{int}\cap\{\beta\}_{int}=\emptyset$ and $\{\beta\}_{int}\cap S=\emptyset$. Therefore, $\mathcal{V}$ is the disjoint union
$$\mathcal{V}=S\cup\big(\bigcup_{\beta\in\mathcal{B}_S(\mathcal{Y}_S(G))}\{\beta\}_{int}\big).$$

Recall that the set $\mathcal{U}$, given by (\ref{eqeq}), is the set of interior vertices of the branches $\mathcal{B}_S(\mathcal{Y}_S(G))-\{\beta_1,\dots,\beta_m\}$.  Thus, the vertex set
$$\mathcal{V}-\mathcal{U}=S\cup\big(\bigcup_{i=1}^m\{\beta^j\}_{int}\big).$$
where each $\{\beta^i\}_{int}\cap\{\beta^j\}_{int}=\emptyset$ and $\{\beta^i\}_{int}\cap S=\emptyset$ for $i\neq j$. Therefore, $$|\mathcal{V}-\mathcal{U}|=|S|+\sum_{i=1}^m(|\beta^j|-2)$$
since each branch $\beta^j$ has $|\beta^j|-2$ interior vertices. If $\beta^j=\emptyset$ then it has no interior vertices which is compensated for by assuming $|\beta^j|=2$.

Note that $|\mathcal{Z}_S^\prime S(G)|=|\mathcal{Y}_S(G)|_{\mathcal{V}-\mathcal{U}}|=|\mathcal{V}-\mathcal{U}|$.
As constructing $\mathcal{Z}^{\prime\prime}_S(G)$ from $\mathcal{Z}^\prime_G(S)$ only involves the addition of edges and constructing $\mathcal{Z}_S(G)$ from $\mathcal{Z}_S^{\prime\prime}G(S)$ the removal of parallel edges it follows that $|\mathcal{Z}_S(G)|=|\mathcal{V}-\mathcal{U}|$. This completes the proof.
\end{proof}

In example \ref{ex12} we consider the $\mathcal{Z}$-construction of the graph $G$ in figure \ref{fig10} over the complete structural set $S=\{v_1,v_6,v_7\}$. From proposition \ref{prop1} it follows that $|\mathcal{Z}_S(G)|=|S|+|\beta^1|+|\beta^6|+|\beta^7|$. As $\beta^1=\emptyset$, $\beta^6=v_1,v_3,v_6$, and $\beta^7=v_1,v_2,v_5,v_7$ then $|\mathcal{Z}_S(G)|=6$.

Note that in this example $|G|>|\mathcal{Z}_S(G)|$. Hence, $\mathcal{Z}_S(G)$ can be considered to be a \textit{reduction} of $G$ over the weight set $\mathbb{Z}^+$ having the same nonzero spectrum.

\section{Proofs}
In this section we give proofs of the theorems and lemmas in sections 3 and 4. Let $G\in\mathbb{G}$. If $S\in st(G)$ then, as previously noted, the vertices of $G$ can be ordered such that $M(G)-\lambda I$ has the block form
\begin{equation}\label{eq3.1}
M(G)-\lambda I=\left( \begin{array}{cc}
A & B\\
C & D
\end{array}
\right)
\end{equation}
where  $A$ corresponds to the vertices in $\bar{S}$ and is a triangular matrix with nonzero diagonal (see \textit{Frobenius normal form} in \cite{Brualdi91}). The matrix $A$ can be made triangular from the fact that $G|_{\bar{S}}$ contains no (nonloop) cycles. The nonzero diagonal follows from the assumption that $\omega(e_{ii})\neq \lambda$ for $v_i\in \bar{S}$. As a triangular matrix with nonzero diagonal has a nonzero determinant then the matrix $A$ is invertable.

Using the matrix identity
\begin{equation}\label{eq1.2}
 \det\left( \begin{array}{cc}
 A & B\\
 C & D
 \end{array}
 \right)=\det(A)\cdot\det(D-CA^{-1}B)
\end{equation}
it follows that
\begin{equation*}
 \det(D-CA^{-1}B)=\frac{\det(M(G)-\lambda I)}{\det(A)}.
\end{equation*}

As $D-CA^{-1}B=R-\lambda I$ for some $R\in\mathbb{W}[\lambda]^{|S|\times |S|}$ then the claim is that the isospectral reduction $\mathcal{R}_S(G)$ is the graph with adjacency matrix $R$ (see proof of lemma \ref{lemma1.2}). Furthermore, as $A$ corresponds to the vertex set $\bar{S}$ then by equation (\ref{eq0.2})
$$\det(A)=\prod_{v_i\in\bar{S}}\big(\omega(e_{ii})-\lambda\big).$$ Our first task in this section is to make this precise.

\begin{definition}
Let $M\in\mathbb{W}[\lambda]^{n\times n}$ and $\mathcal{I}\subseteq\{1,\dots,n\}$ be nonempty. If the set $\mathcal{I}=\{i_1,\dots,i_m\}$ where $i_j<i_{j+1}$ then the matrix $A$ given by $A_{k\ell}=M_{i_ki_\ell}$ for $1\leq k,\ell\leq m\leq n$ is the \textit{principle submatrix} of $M$ indexed by $\mathcal{I}$.
\end{definition}

In what follows, we will use the convention that if $\mathcal{I}=\{i_1,\dots,i_m\}$ is an indexing set of a matrix then $i_j<i_{j+1}$. Moreover, if $D$ the principle submatrix of the matrix $M\in\mathbb{W}[\lambda]^{n\times n}$ indexed by $\mathcal{I}$ then there exists a unique permutation matrix $P$ such that
$$PMP^{-1}=\left( \begin{array}{cc}
A & B\\
C & D\\
\end{array}
\right)$$
where $A$ is the principle submatrix of $M$ indexed by $\bar{\mathcal{I}}=\{1,\dots,n\}-\mathcal{I}$.

Assuming $A$ is invertable, then by (\ref{eq1.2})
$$\det(M)=\det(PMP^{-1})=\det(A)\det(D-CA^{-1}B).$$

If this is the case we call the matrix $D-CA^{-1}B$ the \textit{reduction} of $M$ over $\mathcal{I}$ and write $r(M;\mathcal{I})=D-CA^{-1}B$. In the case that $\mathcal{I}=\{1,\dots,n\}$ we let the matrix $r(M;\mathcal{I})=M$. Moreover, if $r(M;\mathcal{I})$ can be reduced over the index set $\mathcal{J}$ we write the reduction of $r(M;\mathcal{I})$ over $\mathcal{J}$ by $r(M;\mathcal{I},\mathcal{J})$, and so on.

\begin{lemma}\label{lemma1.1}
Let $M\in\mathbb{W}[\lambda]^{n\times n}$ and $\mathcal{I}_m\subseteq\{1,\dots, n\}$ be nonempty. If the principle submatrix of $M$ indexed by $\bar{\mathcal{I}}_m$ is upper triangular with nonzero diagonal and the sets $\mathcal{I}_m\subseteq\mathcal{I}_{m-1}\subseteq\dots\subseteq\mathcal{I}_1\subseteq\{1,\dots,n\}$ then   $r(M;\mathcal{I}_1,\dots,\mathcal{I}_{m-1},\mathcal{I}_m)=r(M;\mathcal{I}_m)$.
\end{lemma}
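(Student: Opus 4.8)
The plan is to prove the statement by induction on $m$, reducing everything to the single two-step case $r(M;\mathcal{I}_1,\mathcal{I}_2)=r(M;\mathcal{I}_2)$ where $\mathcal{I}_2\subseteq\mathcal{I}_1\subseteq\{1,\dots,n\}$. Once this base identity is established, the general chain $\mathcal{I}_m\subseteq\dots\subseteq\mathcal{I}_1$ collapses step by step: applying the two-step case to the innermost pair lets me replace $r(M;\mathcal{I}_1,\dots,\mathcal{I}_{m-1},\mathcal{I}_m)$ by a reduction with one fewer stage, and iterating drives the whole sequence down to $r(M;\mathcal{I}_m)$. So the entire content lives in the two-step case, and I would devote the proof to that.

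\textbf{The two-step case via block partitioning.}
For the two-step case I would partition the index set $\{1,\dots,n\}$ into three blocks according to membership: let $\mathcal{I}_2$ be the innermost surviving set, $\mathcal{I}_1-\mathcal{I}_2$ the indices discarded at the \emph{second} reduction, and $\bar{\mathcal{I}}_1$ the indices discarded at the \emph{first} reduction. After the unique permutation guaranteed in the preamble, $M$ takes a $3\times 3$ block form with blocks indexed by $(\bar{\mathcal{I}}_1,\ \mathcal{I}_1-\mathcal{I}_2,\ \mathcal{I}_2)$. The first reduction $r(M;\mathcal{I}_1)$ is the Schur complement of the top-left block $A$ (the $\bar{\mathcal{I}}_1$-block, upper triangular with nonzero diagonal, hence invertible), producing a $2\times 2$ block matrix on the index set $\mathcal{I}_1$. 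The second reduction $r(\,\cdot\,;\mathcal{I}_2)$ is then the Schur complement of the top-left block of \emph{that} matrix, namely the $(\mathcal{I}_1-\mathcal{I}_2)$-block. The goal is to show this iterated Schur complement equals the single Schur complement of the combined $\bar{\mathcal{I}}_2=\bar{\mathcal{I}}_1\cup(\mathcal{I}_1-\mathcal{I}_2)$-block, which is exactly $r(M;\mathcal{I}_2)$.

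\textbf{The algebraic heart: the quotient property of Schur complements.}
The key step is the \textbf{quotient formula} for Schur complements: taking the Schur complement with respect to a block, and then taking the Schur complement of the result with respect to a subblock, equals taking the Schur complement with respect to the union of the two blocks in one stroke. Concretely, with $M$ in the $3\times 3$ block form above and writing the combined top-left $2\times 2$ block (over $\bar{\mathcal{I}}_2$) as $\left(\begin{smallmatrix} A & B_{12} \\ C_{21} & D_{11}\end{smallmatrix}\right)$, one verifies that the Schur complement of $D_{11}$ in the once-reduced matrix coincides with the Schur complement of the whole combined block in $M$. Before I can invoke this I must check the invertibility hypotheses feed through: $A$ is invertible by assumption, and I need the $(\mathcal{I}_1-\mathcal{I}_2)$-block appearing at the second stage to be invertible as well. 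This follows because the $\bar{\mathcal{I}}$-principal submatrix being upper triangular with nonzero diagonal forces every principal submatrix of it (in particular the combined $\bar{\mathcal{I}}_2$-block is again upper triangular with nonzero diagonal, so all the relevant Schur-complement pivots are invertible).

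\textbf{Main obstacle.}
I expect the main obstacle to be bookkeeping rather than a deep idea: namely, verifying that the permutations used to bring $M$ into block form at each stage are mutually compatible, so that the blocks produced by the first reduction are correctly indexed when the second reduction is applied, and that the "combined block" $\bar{\mathcal{I}}_2$ really is the top-left block after a single permutation. The substantive mathematical fact is just the Schur-complement quotient identity together with the propagation of the upper-triangular-nonzero-diagonal condition to all the submatrices that must be inverted along the way; once the indexing is pinned down, the identity itself is a direct, if slightly tedious, block computation using $\det(A)\det(D-CA^{-1}B)=\det(M)$ and the associated inverse-of-block-matrix formulas.
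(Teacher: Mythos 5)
Your proposal is correct and follows essentially the same route as the paper: the same three-block partition indexed by $\bar{\mathcal{I}}_1$, $\mathcal{I}_1-\mathcal{I}_2$, $\mathcal{I}_2$, the same two-stage Schur complement collapsed via the quotient identity (which the paper verifies by hand with the block-inverse/Woodbury formula rather than citing it), and the same iteration up the nested chain. The one step you pass over quickly --- that the second-stage pivot $A_1-C_0A_0^{-1}B_0$ is again upper triangular with nonzero diagonal even though $\bar{\mathcal{I}}_1$ and $\mathcal{I}_1-\mathcal{I}_2$ interleave --- is precisely the index-chasing claim that occupies the nontrivial part of the paper's proof, and it is genuinely needed (not just invertibility of that pivot) so that the reduced matrix ``has the same form as $M$'' and the argument can be repeated along the chain.
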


\begin{proof}
Without loss in generality suppose $\mathcal{I}_m\subseteq\{1,\dots,n\}$ such that the matrix $M$ has block form
$$M=\left( \begin{array}{cc}
A & B\\
C & D\\
\end{array}
\right)$$
where $A$ is an upper triangular principle submatrix of $M$ indexed by $\bar{\mathcal{I}}_m$ with nonzero diagonal.
Then for $\mathcal{I}_m\subseteq\mathcal{I}_1\subseteq\{1,\dots,n\}$ there exists a unique permutation matrix $P$ such that
$$PMP^{-1}=\left( \begin{array}{ccc}
A_0 & B_0 & B_2\\
C_0 & A_1 & B_1\\
C_2 & C_1 & D
\end{array}
\right)$$
where $A_0$, $A_1$, and $D$ are the principle submatrices indexed by $\{1,\dots,n\}-\mathcal{I}_1,\mathcal{I}_1-\mathcal{I}_m$, and $\mathcal{I}_m$ respectively. Moreover, both $A_0$ and $A_1$ are upper triangular matrices with nonzero diagonal as they are principle submatrices of $A$ and are therefore invertable.

By partitioning the matrix $PMP^{-1}$ into the blocks
$$PMP^{-1}=\left( \begin{array}{cc}
\left[\begin{array}{c}
A_0
\end{array}\right]
&
\left[\begin{array}{cc}
B_0 & B_2
\end{array}\right]\\
\left[\begin{array}{c}
C_0\\
C_2
\end{array}\right]&
\left[\begin{array}{cc}
A_1 & B_1\\
C_1 & D
\end{array}\right]
\end{array}
\right)$$
it then follows that
\begin{equation}
r(M;\mathcal{I}_1)=
\left( \begin{array}{cc}
A_1-C_0A_0^{-1}B_0 & B_1-C_0A_0^{-1}B_2\\
C_1-C_2A_0^{-1}B_0 & D-C_2A_0^{-1}B_2\\
\end{array}
\right).
\end{equation}

The claim is then that the matrix $C_0A_0^{-1}B_0$ is an upper triangular matrix possessing a zero diagonal. To see this let the sets $\{1,\dots,n\}-\mathcal{I}_1=\{i_1,\dots,i_s\}$, $\mathcal{I}_1-\mathcal{I}_m=\{j_1,\dots,j_t\}$, and $(A_0^{-1})=\alpha_{ij}$. As the inverse of an upper triangular matrix is upper triangular $\alpha_{ij}=0$ for $i>j$. Hence,
$$(C_0A^{-1}_0B_0)_{\ell k}=\sum_{p=1}^{s}\sum_{q=1}^{p}\big(A_{j_{\ell}i_q}\alpha_{qp}A_{i_pj_k}\big) \ \ \ 1\leq \ell,k\leq t.$$
As $1\leq q\leq p$ then each $i_q\leq i_p$. For each $k\leq \ell$ note that similarly $j_k\leq j_\ell$. Moreover, if $j_k<i_p$ then $A_{i_pj_k}=0$ since $A$ is upper triangular. If $j_k>i_p$ then $i_q<j_\ell$ implying $A_{j_\ell i_q}=0$ for the same reason. As $j_k\neq i_p$, it then follows that
$$(C_0A^{-1}_0B_0)_{\ell k}=0 \ \text{for all} \ \ k\leq \ell$$
verifying the claim.

$A_1-C_0A_0^{-1}B_0$ is therefore an upper triangular matrix with nonzero diagonal implying $A_1-C_0A_0^{-1}B_0$ is invertible and $r(M;\mathcal{I}_1)$ can be reduced over $\mathcal{I}_m$. In particular,
\begin{equation}\label{eq2.1}
r(M;\mathcal{I}_1,\mathcal{I}_m)=D-C_2A_0^{-1}B_2-\big(C_1-C_2A_0^{-1}B_0\big)\Gamma\big(B_1-C_0A_0^{-1}B_2\big)
\end{equation}
where $\Gamma=(A_1-C_0A_0^{-1}B_0)^{-1}$.

To verify that $r(M;\mathcal{I}_1,\mathcal{I}_m)=r(M;\mathcal{I}_m)$ note that $M$ has block form
$$M=\left( \begin{array}{cc}
A & B\\
C & D\\
\end{array}
\right) \ \ \text{and} \ \
QAQ^{-1}=\left[ \begin{array}{cc}
A_0 & B_0\\
C_0 & A_1\\
\end{array}
\right]$$
where $Q$ is the principle submatrix of $P$ indexed by $\bar{\mathcal{I}}_m$. Therefore,
$$A^{-1}=Q^{-1}\left[ \begin{array}{cc}
A_0 & B_0\\
C_0 & A_1\\
\end{array}
\right]^{-1}Q=Q^{-1}\left[ \begin{array}{cc}
(A_0-B_0A_1^{-1}C_0)^{-1} & -A_0^{-1}B_0\Gamma^{-1}\\
-\Gamma^{-1}C_0A_0^{-1} & \Gamma^{-1}\\
\end{array}
\right]Q.$$
Note the matrix $(A_0-B_0A_1^{-1}C_0)^{-1}=A_0^{-1}-A_0^{-1}B_0\Gamma C_0A_0^{-1}$ by the Woodbury matrix identity ( see \cite{Hager89}) and is therefore well defined. From this
\begin{align*}
r(M;\mathcal{I}_m)=& D-CQ^{-1}
\left[\begin{array}{cc}
A_0 & B_0\\
C_0 & A_1\\
\end{array}
\right]^{-1}
QB\\
=& D-\left[ \begin{array}{cc}
C_2 & C_1
\end{array}
\right]
\left[ \begin{array}{cc}
(A_0-B_0A_1^{-1}C_0)^{-1} & -A_0^{-1}B_0\Gamma^{-1}\\
-\Gamma^{-1}C_0A_0^{-1} & \Gamma^{-1}\\
\end{array}
\right]
\left[ \begin{array}{c}
B_2\\ B_1
\end{array}
\right]\\
=& D-C_2(A_0^{-1}-A_0^{-1}B_0\Gamma C_0A_0^{-1})B_2-C_1\Gamma C_0A_0^{-1}B_2-\\
& \hspace{1.85in} C_2A_0^{-1}B_0\Gamma C_0A_0^{-1}B_1+C_1\Gamma B_1.
\end{align*}
From (\ref{eq2.1}) it follows that $r(M;\mathcal{I}_m)=r(M;\mathcal{I}_1,\mathcal{I}_m)$

For $\mathcal{I}_m\subseteq\mathcal{I}_2\subseteq\mathcal{I}_1$ the same argument can be repeated to show
$$r\big(r(M;\mathcal{I}_1);\mathcal{I}_m\big)=r\big(r(M;\mathcal{I}_1);\mathcal{I}_2;\mathcal{I}_m\big)$$
as $r(M;\mathcal{I}_1)$ has the same form as $M$, i.e. its upper left hand block is an upper triangular matrix with nonzero diagonal. Hence, $r(M;\mathcal{I}_m)=r(M;\mathcal{I}_1,\mathcal{I}_2;\mathcal{I}_m)$. The lemma follows by further extending $\mathcal{I}_m\subseteq\mathcal{I}_2\subseteq\mathcal{I}_1\subseteq\{1,\dots,n\}$ to the nested sequence $\mathcal{I}_m\subseteq\mathcal{I}_{m-1}\subseteq\dots\subseteq\mathcal{I}_1\subseteq\{1,\dots,n\}$ by repeated use of the above argument.
\end{proof}

To establish that graph reductions are analogous to matrix reductions we prove the following lemma.

\begin{lemma}\label{lemma1.2}
If $S=\{v_{m+1},\dots,v_{n}\}$ is a structural set of $G\in\mathbb{G}$ then $\mathcal{R}_S(G)$ is the graph with adjacency matrix $r\big(M(G)-\lambda I;\{m+1,\dots,n\}\big)+\lambda I$.
\end{lemma}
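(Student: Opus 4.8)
The plan is to pass to the block form used throughout this section and to verify the claimed adjacency matrix entry by entry. After the permutation placing the indices of $\bar{S}$ first, write $M(G)-\lambda I$ in the form (\ref{eq3.1}), where $A$ is the principle submatrix indexed by $\bar{S}$ and $D$ the one indexed by $S$. Since the off-diagonal blocks meet no diagonal entries, the entries of $B$ and $C$ are the bare edge weights $\omega(e_{kl})$ between $\bar S$ and $S$; moreover $A=M(G|_{\bar S})-\lambda I$ and $D+\lambda I$ is exactly the $S\times S$ block of $M(G)$, with entries $\omega(e_{ij})$. Because $S\in st(G)$, a topological reordering of $\bar S$ makes $A$ upper triangular with diagonal entries $\omega(e_{kk})-\lambda\neq 0$, hence invertible, so $r\big(M(G)-\lambda I;\{i_1,\dots,i_p\}\big)=D-CA^{-1}B$ is defined. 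Adding $\lambda I$, the assertion reduces to the entrywise identity $\mu(e_{ij})=\omega(e_{ij})-(CA^{-1}B)_{ij}$ for all $v_i,v_j\in S$.

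The heart of the argument is a closed-form expansion of $A^{-1}$. I would write $A=-(\lambda I-\Lambda)+N$, where $\Lambda=\mathrm{diag}(\omega(e_{kk}))$ collects the loop weights of $\bar S$ and $N$ is the strictly upper-triangular part recording the nonloop edges within $\bar S$, factor $A=-(\lambda I-\Lambda)\big(I-(\lambda I-\Lambda)^{-1}N\big)$, and use that $N$ is nilpotent to obtain the finite Neumann series
$$A^{-1}=-\sum_{r\geq 0}\big((\lambda I-\Lambda)^{-1}N\big)^{r}(\lambda I-\Lambda)^{-1}.$$
Expanding the $(k,l)$ entry term by term exhibits $-(A^{-1})_{kl}$ as a sum over walks $k=u_0\to u_1\to\cdots\to u_r=l$ inside $\bar S$, each walk contributing
$$\frac{1}{\lambda-\omega(e_{u_0u_0})}\prod_{s=1}^{r}\omega(e_{u_{s-1}u_s})\frac{1}{\lambda-\omega(e_{u_su_s})}.$$
Since $\ell(G)|_{\bar S}$ contains no cycles, every such walk carrying nonzero weight is in fact a simple path, so the sum is finite and indexed precisely by the paths of $G|_{\bar S}$.

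It then remains to match these terms with branch products. Substituting the expansion into $(CA^{-1}B)_{ij}=\sum_{k,l\in\bar S}\omega(e_{ik})(A^{-1})_{kl}\omega(e_{lj})$, each nonzero summand is indexed by a path $v_i\to u_0\to\cdots\to u_r\to v_j$ whose interior vertices lie in $\bar S$, that is, by a branch $\beta\in\mathcal{B}_{ij}(G;S)$ with at least one interior vertex. A direct comparison with (\ref{eq0.9}) shows this summand equals exactly $-\mathcal{P}_\omega(\beta)$, the factor $\tfrac{1}{\lambda-\omega(e_{ww})}$ attaching to each interior vertex $w$ and pairing with its outgoing edge precisely as in the branch product. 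Hence $-(CA^{-1}B)_{ij}$ sums $\mathcal{P}_\omega(\beta)$ over all branches with at least one interior vertex, and adding the remaining term $\omega(e_{ij})$ — which is the branch product of the length-one branch (the direct edge, or the loop when $i=j$) in the $m=2$ case of (\ref{eq0.9}) — yields
$$\omega(e_{ij})-(CA^{-1}B)_{ij}=\sum_{\beta\in\mathcal{B}_{ij}(G;S)}\mathcal{P}_\omega(\beta)=\mu(e_{ij})$$
by (\ref{eq1.0}), which is the desired identity.

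The main obstacle is the combinatorial bookkeeping in the last two steps: one must check that the Neumann-series walk expansion assigns the factor $1/(\lambda-\omega(e_{ww}))$ to exactly the interior $\bar S$-vertices and the edge weights to exactly the traversed edges, so that each term reproduces a branch product with the correct sign, and that the acyclicity hypothesis genuinely forces every weight-bearing walk to be a simple path (hence a bona fide branch with distinct interior vertices, matching the definition of $\mathcal{B}_{ij}(G;S)$). Some care is also needed for the diagonal entries $i=j$, where the loop at $v_i$ must be recognized as the trivial $m=2$ cycle in $\mathcal{B}_{ii}(G;S)$ while the nontrivial cycles through $\bar S$ are supplied by the same $CA^{-1}B$ expansion.
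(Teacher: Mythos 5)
Your proposal is correct, and it reaches the conclusion by a genuinely different route from the paper. The paper proves the lemma by induction on the vertices of $\bar S$, removing them one at a time: at each step it computes the single-vertex Schur complement explicitly, invokes Lemma \ref{lemma1.1} to identify $r(M(G)-\lambda I;\mathcal{I}_k)$ with $r(M(G)-\lambda I;\mathcal{I}_{k-1},\mathcal{I}_k)$, and then splits the branch set $\mathcal{B}_{ij}(G;V_k)$ into branches through and not through the vertex $v_k$ just removed, matching the two resulting sums against the two terms of the one-step Schur formula. You instead eliminate all of $\bar S$ in one shot, inverting $A$ via the finite Neumann series afforded by the nilpotency of its strictly upper-triangular part, and reading off $-(CA^{-1}B)_{ij}$ as a sum over weight-bearing walks through $\bar S$, which acyclicity forces to be simple paths, i.e.\ exactly the branches with at least one interior vertex; I have checked that the resulting term-by-term identification with $\mathcal{P}_\omega(\beta)$ in (\ref{eq0.9}), including the $\omega(e_{ij})$ contribution of the length-one branch and the cancellation of $-\lambda$ against $+\lambda I$ on the diagonal, works out as you describe. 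The trade-off: the paper's induction keeps each algebraic step elementary (a rank-one update) but needs the commutativity lemma as scaffolding and a combinatorial decomposition of branches at each stage; your closed-form expansion of $A^{-1}$ bypasses Lemma \ref{lemma1.1} entirely and makes the bijection between matrix terms and branch products visible in a single formula, at the cost of the walk-to-path bookkeeping you rightly flag. Both are complete arguments; yours is arguably the more transparent explanation of \emph{why} the reduced adjacency matrix is a branch-product sum.
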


\begin{proof}
Let $\bar{S}=\{v_1,\dots,v_m\}$. From the discussion in the first paragraph of section 5, if $S$ is a structural set of $G=(V,E,\omega)$ then $M(G)-\lambda I$ has the block form
$$M(G)-\lambda I=\left( \begin{array}{cc}
A & B\\
C & D
\end{array}
\right)$$
where $A$ is the principle submatrix of $M(G)-\lambda I$ indexed by $\{1,\dots,m\}$ and is an upper triangular matrix with nonzero diagonal.

With this in mind, let $V_k=\{v_{k+1},\dots,v_n\}$ and  $\mathcal{I}_k=\{k+1,\dots,n\}$ for $1\leq k\leq m$.
If $M(G)_{ij}=\omega_{ij}$ for $1\leq i,j\leq n$ then, proceeding by induction, for $k=1$
$$r(M(G)-\lambda I;\mathcal{I}_1)=\big(D_1-[\omega_{21},\dots,\omega_{n1}]^T\frac{1}{\omega_{11}-\lambda}[\omega_{12},\dots,\omega_{1n}]\big)$$
where $D_1$ is the principle submatrix of $M(G)-\lambda I$ indexed by $\mathcal{I}_1$. Hence,
\begin{equation}\label{eq2.5}
r(M(G)-\lambda I;\mathcal{I}_1)_{ij}=
\begin{cases}
\omega_{ij}+\omega_{i1}\omega_{1j}/(\lambda-\omega_{11}) & i\neq j\\
\omega_{ij}+\omega_{i1}\omega_{1j}/(\lambda-\omega_{11})-\lambda & i=j
\end{cases}
\end{equation}
for $2\leq i,j\leq n$.

Conversely, note that the set $\mathcal{B}_{ij}(G;V_1)$ consists of at most the two branches $\beta^+=v_i,v_1,v_j$ and $\beta^-=v_i,v_j$ i.e. the branches from $v_i$ to $v_j$ with and without interior vertex $v_1$. As $\mathcal{P}_\omega(\beta^-)=\omega_{ij}$ and $\mathcal{P}_\omega(\beta^+)=\omega_{i1}\omega_{1j}/(\lambda-\omega_{11})$ if the branches $\beta^-$ and $\beta^+$ are respectively in $\mathcal{B}_{ij}(G;V_1)$ it then follows from (\ref{eq1.0}) and (\ref{eq2.5}) that $r\big(M(G)-\lambda I;\mathcal{I}_1\big)=M\big(\mathcal{R}_{V_1}(G)\big)-\lambda I$.

Suppose then that $r\big(M(G)-\lambda I;\mathcal{I}_{k-1}\big)=M\big(\mathcal{R}_{V_{k-1}}(G)\big)-\lambda I$ for some $k\leq m$. Since the principle submatrix of $M(G)-\lambda I$ indexed by $\mathcal{I}_{k}$ is upper triangular with nonzero diagonal and $\mathcal{I}_k\subseteq\mathcal{I}_{k-1}\subseteq\{1,\dots,n\}$ an application of lemma \ref{lemma1.1} implies that $r(M(G)-\lambda I;\mathcal{I}_k)=r(M(G)-\lambda I;\mathcal{I}_{k-1},\mathcal{I}_k)$. Hence,
$$r(M(G)-\lambda I;\mathcal{I}_k)=r\big(M(\mathcal{R}_{V_{k-1}}(G))-\lambda I;\mathcal{I}_k\big).$$

Letting $\mathcal{M}=M(\mathcal{R}_{V_{k-1}}(G))$ then by the argument above
$$r(M(G)-\lambda I;\mathcal{I}_k)_{ij}=
\begin{cases}
\mathcal{M}_{ij}+\mathcal{M}_{ik}\mathcal{M}_{kj}/(\lambda-\mathcal{M}_{kk}) & i\neq j\\
\mathcal{M}_{ij}+\mathcal{M}_{ik}\mathcal{M}_{kj}/(\lambda-\mathcal{M}_{kk})-\lambda & i=j
\end{cases}$$
for $k+1\leq i,j\leq n$. As $\mathcal{M}_{ij}=M(\mathcal{R}_{V_{k-1}}(G))_{ij}$ then the entries of $\mathcal{M}$ are given by
$\mathcal{M}_{ij}=\sum_{\beta\in\mathcal{B}_{ij}(G;V_{k-1})}\mathcal{P}_\omega(\beta)$.

Observe that
\begin{equation*}
\sum_{\beta\in\mathcal{B}_{ij}(G;V_k)}\mathcal{P}_\omega(\beta)=\sum_{\beta\in\mathcal{B}^-_{ij}(G;V_k)}\mathcal{P}_\omega(\beta)+\sum_{\beta\in\mathcal{B}^+_{ij}(G;V_k)}\mathcal{P}_\omega(\beta)
\end{equation*}
where $\mathcal{B}^+_{ij}(G;V_k)$ and $\mathcal{B}^-_{ij}(G;V_k)$ are the branches in $\mathcal{B}_{ij}(G;V_k)$ that contain and do not contain the interior vertex $v_k$ respectively. It then immediately follows that $\mathcal{B}^-_{ij}(G;V_k)=\mathcal{B}_{ij}(G;V_{k-1})$ implying $\sum_{\beta\in\mathcal{B}^-_{ij}(G;V_k)}\mathcal{P}_\omega(\beta)=\mathcal{M}_{ij}$.

On the other hand, any $\beta\in\mathcal{B}^+_{ij}(G;V_{k})$ can be written as $\beta=v_{i},\dots,v_k,\dots,v_{j}$ where $\beta_1=v_{i},\dots,v_k\in\mathcal{B}_{ik}(G;V_{k-1})$ and $\beta_2=v_k,\dots,v_{j}\in\mathcal{B}_{kj}(G;V_{k-1})$. Hence, equation (\ref{eq0.9}) implies
\begin{equation}\label{eq2.6}
\sum_{\beta\in\mathcal{B}^+_{ij}(G;V_k)}\mathcal{P}_\omega(\beta)=\sum_{\beta\in\mathcal{B}^+_{ij}(G;V_k)}\frac{\mathcal{P}_\omega(\beta_1)\mathcal{P}_\omega(\beta_2)}{\lambda-\omega_{kk}}.
\end{equation}

Conversely, if $\beta_1=v_{i},\dots,v_k\in\mathcal{B}_{ik}(G;V_{k-1})$ and $\beta_2=v_k,\dots,v_{j}\in\mathcal{B}_{kj}(G;V_{k-1})$ then $\beta=v_{i},\dots,v_k,\dots,v_{j}\in\mathcal{B}^+_{ij}(G;V_{k})$. This follows from the fact that $\beta_1$ and $\beta_2$ share no interior vertices since otherwise $G|_{\bar{V}_k}$ would contain a cycle. Note that $G|_{\bar{V}_k}$ cannot contain a cycle since $V_k\in st(G)$. Therefore,
\begin{equation}\label{eq2.7}
\sum_{\beta\in\mathcal{B}^+_{ij}(G;V_k)}\mathcal{P}_\omega(\beta_1)\mathcal{P}_\omega(\beta_2)=\sum_{\beta_1\in\mathcal{B}_{ik}(G;V_{k-1})}\mathcal{P}_\omega(\beta_1)\sum_{\beta_2\in\mathcal{B}_{kj}(G;V_{k-1})}\mathcal{P}_\omega(\beta_2).
\end{equation}

Moreover, $\omega_{kk}=\mathcal{M}_{kk}$ since $\mathcal{B}_{kk}(G;V_{k-1})$ contains at most the cycle $v_k,v_k$ given that $V_{k-1}\in st(G)$. From (\ref{eq2.6}) and (\ref{eq2.7}) it then follows that
$$\mathcal{M}_{ij}+\mathcal{M}_{ik}\mathcal{M}_{kj}/(\lambda-\mathcal{M}_{kk})=\sum_{\beta\in\mathcal{B}_{ij}(G;V_{k})}\mathcal{P}_\omega(\beta)$$
implying $r\big(M(G)-\lambda I;\mathcal{I}_k\big)=M\big(\mathcal{R}_{V_k}(G)\big)-\lambda I$.

By induction we have that $r\big(M(G)-\lambda I;\mathcal{I}_m\big)=M\big(\mathcal{R}_{S}(G)\big)-\lambda I$ by setting $k=m$.
\end{proof}

If $S=\{v_{i_1},\dots,v_{i_m}\}$ is a structural set of $G\in\mathbb{G}$ then let $\{i_1,\dots,i_m\}$ be the index set associated with $S$. Hence, if $S\in st(G)$ is indexed by $\mathcal{I}$ then $\bar{S}$ is indexed by $\bar{\mathcal{I}}$ and there is a unique permutation matrix $P$ such that
$$P\big(M(G)-\lambda I\big)P^{-1}=\left( \begin{array}{cc}
A & B\\
C & D
\end{array}\right)$$
where $A$ is the principle submatrix of $M(G)-\lambda I$ indexed by $\bar{\mathcal{I}}$. Therefore, $A=M(G|_{\bar{S}})-\lambda I$ and it follows from lemma \ref{lemma1.2} and (\ref{eq1.2}) that
\begin{equation}\label{eq3.3}
\det\big(M(G)-\lambda I\big)=\det\big(M(G|_{\bar{S}})-\lambda I\big)\det\big(M(\mathcal{R}_S(G))-\lambda I\big).
\end{equation}
Let $\det(M(G|_{\bar{S}})-\lambda I)=p/q\in\mathbb{W}[\lambda]$ where $p,q\in\mathbb{C}[\lambda]$. Then theorem \ref{maintheorem} follows by observing that $\sigma(G|_{\bar{S}})$ and $\sigma^{-1}(G|_{\bar{S}})$ are the solutions to $p=0$ and $q=0$ respectively.

We now give proofs of the theorems on isospectral transformations. For theorem \ref{theorem3} we give the following.

\begin{proof}
Let $S$ be a structural set of $G=(V,E,\omega)$. Following definition \ref{def1}, the graph $G|_{\bar{S}}$ contains no cycles, except possibly loops, and if $G|_{\bar{S}}$ has loops they do not have weight $\lambda$. If $S\subseteq T\subseteq V$ then the graph $G|_{\bar{T}}$ is a subgraph of $G|_{\bar{S}}$. Therefore, $T\in st(G)$ as $G|_{\bar{T}}$ similarly contains no cycles, except possibly loops, where loops do not have weight $\lambda$. By the same argument, $S\in st(\mathcal{R}_T(G))$ since $\mathcal{R}_T(G)|_{\bar{S}}$ is a subgraph of $G|_{\bar{S}}$. Hence, $T,S$ induces a sequence of reductions on $G$.

Suppose $S_m\subseteq S_{m-1}\subseteq\dots\subseteq S_1\subseteq V$ where $S_m$ is a structural set of $G$. It then follows that $S_1,\dots,S_m$ induce a sequence of reductions on $G$. Let $\mathcal{I}_i$ be the index set associated with $S_i$ for $1\leq i\leq m$. By lemma \ref{lemma1.2} the graph $\mathcal{R}(G;S_1)$ has adjacency matrix $r(M(G)-\lambda I,\mathcal{I}_1)+\lambda I$. Hence, by another application of lemma \ref{lemma1.2}, the graph $\mathcal{R}(G;S_1,S_2)$ has adjacency matrix
$$r\big(r(M(G)-\lambda I;\mathcal{I}_1)+(\lambda-\lambda)I;\mathcal{I}_2\big)+\lambda I=r(M(G)-\lambda I;\mathcal{I}_1,\mathcal{I}_2)+\lambda I.$$
Continuing in this manner, it follows that $\mathcal{R}(G;S_1,\dots,S_m)$ has adjacency matrix given by $r(M(G)-\lambda I;\mathcal{I}_1,\dots,\mathcal{I}_m)+\lambda I$ which by lemma \ref{lemma1.1} is equivalent to the matrix $r(M(G)-\lambda I;\mathcal{I}_m)+\lambda I$.

Given that $r(M(G)-\lambda I;\mathcal{I}_m)+\lambda I$ is the adjacency matrix of $\mathcal{R}_{S_m}(G)$ then lemma \ref{lemma1.2} implies that $\mathcal{R}_{S_m}(G)=\mathcal{R}(G;S_1,\dots,S_m)$.
\end{proof}

We now give a proof of lemma \ref{lemma1}.

\begin{proof}
Let $w_1=p_1/q_1$ and $w_2=p_2/q_2$ be in $\mathbb{W}_\pi[\lambda]$ for $p_1,p_2,q_1,q_2\in\mathbb{C}[\lambda]$. That is, $\pi(w_1),\pi(w_2)\leq 0$. Then
\begin{align*}
\pi(w_1+w_2)&=\pi\Big(\frac{p_1q_2+p_2q_1}{q_1q_2}\Big)\leq\max\{\pi(w_1),\pi(w_2)\}\leq 0,\\
\pi(w_1w_2)&=\pi\Big(\frac{p_1p_2}{q_1q_2}\Big)=\pi(w_1)+\pi(w_2)\leq 0, \ \text{and}\\
\pi\Big(\frac{1}{\lambda-w_1}\Big)&=\pi\Big(\frac{q_1}{(q_1\lambda-p_1)}\Big)=-1\leq 0.
\end{align*}
It then follows from equation (\ref{eq0.9}) and (\ref{eq1.0}) that $\mathcal{R}_S(G)\in\mathbb{G}_\pi$ if $G\in\mathbb{G}_\pi$.
\end{proof}

In order to prove theorem \ref{theorem-3} we require the following lemma that essentially implies that the order in which any two vertices are removed from a graph $G\in\mathbb{G}_\pi$ does not effect the final reduced graph.

\begin{lemma}\label{lemma-2}
Suppose $G\in\mathbb{G}_\pi$ with vertex set $V=\{v_1,\dots,v_n\}$ for $n>2$. If $T=\{v_1,v_2\}$ then $\mathcal{R}(G;V-\{v_1\},T)=\mathcal{R}(G;V-\{v_2\},T)$.
\end{lemma}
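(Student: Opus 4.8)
The plan is to pass to the matrix picture via Lemma \ref{lemma1.2} and to show that \emph{both} orders of removal coincide with a single Schur-complement reduction of $M(G)-\lambda I$ over the block of surviving vertices $W=V-\{v_1,v_2\}$ (so that $T$, the common final vertex set, is $W$, reached by deleting both $v_1$ and $v_2$, one in each order). First I would record that both sequences are legitimate: $V-\{v_1\}\in st(G)$ and $V-\{v_2\}\in st(G)$ because a single vertex induces no cycle in $\ell(G)$ and has loop weight $\neq\lambda$ since $G\in\mathbb{G}_\pi$; and each second step is legitimate because, by Lemma \ref{lemma1}, $\mathcal{R}_{V-\{v_1\}}(G)$ and $\mathcal{R}_{V-\{v_2\}}(G)$ again lie in $\mathbb{G}_\pi$, so the one remaining vertex is removable. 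Writing $M=M(G)-\lambda I$ and ordering the vertices as $v_1,v_2$ followed by $W$, put $M$ in block form $\left(\begin{smallmatrix} A_0 & \mathcal{B}\\ \mathcal{C} & D\end{smallmatrix}\right)$, where $A_0=\left(\begin{smallmatrix} a & b\\ c & d\end{smallmatrix}\right)$ is the $2\times 2$ principle submatrix on $\{v_1,v_2\}$ with $a=\omega_{11}-\lambda$, $d=\omega_{22}-\lambda$, $b=\omega_{12}$, $c=\omega_{21}$, and $D=M(G|_W)-\lambda I$.

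The key observation is that $A_0$ is invertible over the field $\mathbb{W}$ even when $v_1,v_2$ form a cycle, the case in which $A_0$ is not triangular and Lemma \ref{lemma1.1} does not apply directly. Indeed $\det A_0=ad-bc$; since $\pi(\omega_{11}),\pi(\omega_{22})\le 0$ one has $\pi(a)=\pi(d)=1$, hence $\pi(ad)=2$, whereas $\pi(bc)=\pi(b)+\pi(c)\le 0$ by $G\in\mathbb{G}_\pi$. The leading terms cannot cancel because $\pi(ad)=2>0\ge\pi(bc)$, so by the estimates in the proof of Lemma \ref{lemma1} we get $\pi(\det A_0)=2$, and in particular $\det A_0\neq 0$. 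Consequently the single block reduction $r(M;\mathcal{I}_W)=D-\mathcal{C}A_0^{-1}\mathcal{B}$ is well defined, where $\mathcal{I}_W$ indexes $W$.

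It then remains to show that each of the two orders reproduces $r(M;\mathcal{I}_W)$. Removing $v_1$ first is the scalar Schur complement with respect to $a$ (invertible since $a\neq 0$); the resulting $(v_2,v_2)$ block is $d-cb/a$, which is nonzero because it is the $v_2$-loop weight minus $\lambda$ of $\mathcal{R}_{V-\{v_1\}}(G)\in\mathbb{G}_\pi$, so the second elimination is valid. This is exactly the two-step computation carried out in the proof of Lemma \ref{lemma1.1} with the blocks $A_0,A_1,B_0,C_0$ replaced by the scalars $a,d,b,c$; the Woodbury identity used there yields $r(M;\mathcal{I}_1,\mathcal{I}_W)=D-\mathcal{C}A_0^{-1}\mathcal{B}$, where $\mathcal{I}_1$ indexes $V-\{v_1\}$. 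By the symmetric computation, removing $v_2$ first gives $r(M;\mathcal{I}_2,\mathcal{I}_W)=D-\mathcal{C}A_0^{-1}\mathcal{B}$ as well. Hence the two matrix reductions agree; adding $\lambda I$ and applying Lemma \ref{lemma1.2} once more shows $\mathcal{R}(G;V-\{v_1\},T)=\mathcal{R}(G;V-\{v_2\},T)$.

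The main obstacle is precisely the case excluded by Lemma \ref{lemma1.1}: when $\{v_1,v_2\}$ carries a $2$-cycle the removed block $A_0$ is not triangular, so one cannot invoke commutativity of reductions directly. The resolution is to trade the triangularity hypothesis for field-invertibility of $A_0$ (established by the $\pi$-degree count above) together with $\mathbb{G}_\pi$-invertibility of the intermediate single-vertex blocks (supplied by Lemma \ref{lemma1} rather than by a zero diagonal of $C_0A_0^{-1}B_0$). With those two invertibility facts in hand the Woodbury computation of Lemma \ref{lemma1.1} applies unchanged, identifying both removal orders with the single block Schur complement $D-\mathcal{C}A_0^{-1}\mathcal{B}$.
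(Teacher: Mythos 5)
Your proof is correct and follows essentially the same route as the paper's: pass to the matrix picture via Lemma \ref{lemma1.2}, check that the $2\times 2$ principal block on $\{v_1,v_2\}$ is invertible despite not being triangular, and identify both removal orders with the single block Schur complement over $V-\{v_1,v_2\}$. The only real difference is a minor one: you establish $\det A_0\neq 0$ by a direct $\pi$-degree count ($\pi(ad)=2>0\geq\pi(bc)$), whereas the paper argues by contradiction, noting that $\det A_0=0$ would force the loop weight at $v_2$ in $\mathcal{R}_{V-\{v_1\}}(G)$ to equal $\lambda$, contradicting Lemma \ref{lemma1}.
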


\begin{proof}
Let $G\in\mathbb{G}_\pi$ and $M(G)_{ij}=\omega_{ij}$ for $1\leq i,j\leq n$. Then $M(G)-\lambda I$ can be written in the block form
$$M(G)-\lambda I=\left( \begin{array}{ccc}
\omega_{11}-\lambda & \omega_{12} & B_1\\
\omega_{21} & \omega_{22}-\lambda & B_2\\
C_1    & C_2    & D
\end{array}
\right) \ \ \text{where} \ \
A=\left[ \begin{array}{cc}
\omega_{11}-\lambda & \omega_{12}\\
\omega_{21} & \omega_{22}-\lambda
\end{array}
\right]$$
is the principle submatrix of $M(G)-\lambda I$ indexed by $\bar{\mathcal{I}}=\{1,2\}$.

Note the assumption $G\in\mathbb{G}_\pi$ implies that both $\omega_{11}-\lambda$, $\omega_{22}-\lambda\neq 0$. Moreover, if $\det(A)=0$ then $\omega_{22}=\omega_{12}\omega_{21}(\omega_{11}-\lambda)^{-1}+\lambda.$ However, equation (\ref{eq2.5}) implies in this case that
$$M\big(\mathcal{R}_{V-v_1}(G)\big)_{22}-\lambda=\omega_{22}+\frac{\omega_{21}\omega_{12}}{\lambda-\omega_{11}}-\lambda=0.$$
Hence, $M\big(\mathcal{R}_{V-v_1}(G)\big)_{22}=\lambda$, which is not possible as it contradicts the conclusion of lemma \ref{lemma1}. Therefore, $\det(A)\neq 0$ implying
$$R_{ij}= \left(D-\frac{1}{\omega_{jj}-\lambda}C_jB_j-\frac{\omega_{jj}-\lambda}{\det(A)}(C_i-\frac{\omega_{ji}}{\omega_{jj}-\lambda}C_j)(B_i-\frac{\omega_{ij}}{\omega_{jj}-\lambda}B_j)\right)$$
is well defined.

Letting $\mathcal{I}_i=\mathcal{I}\cup\{i\}$ for $i\neq j$ and $i,j\in\bar{\mathcal{I}}$ then repeated use of (\ref{eq1.2}) implies $R_{ij}=r(M(G)-\lambda I;\mathcal{I}_i,\mathcal{I})$. Moreover, as $R_{12}=R_{21}$ then it follows that the graphs $\mathcal{R}(G;V-\{v_1\},T)=\mathcal{R}(G;V-\{v_2\},T)$ since these graphs have the same adjacency matrices.
\end{proof}

To simplify the proof of theorem \ref{theorem-3} we note the following. If $S_1,\dots,S_m$ induces a sequence of reductions on $G=(V,E,\omega)$ then $\mathcal{R}(G;S_1,\dots,S_m)$ can alternately be written as $\mathcal{R}em(G;S_0-S_1,\dots,S_{m-1}-S_{m})$ for $V=S_0$. This notation is meant to indicate that at the $i$th reduction we remove the vertices $S_{i-1}-S_{i}$ from the graph $\mathcal{R}em(G;S_0-S_1,\dots,S_{i-2}-S_{i-1})$ for each $1 \leq i\leq m$ where $S_{-1}-S_{0}=\emptyset$.

With this notation in place we give a proof of theorem \ref{theorem-3}.

\begin{proof}
If $G=(V,E,\omega)\in\mathbb{G}_\pi$ and $\{v_1,\dots,v_m\}\subset V$ then, by the discussion preceding theorem \ref{theorem-1}, the graph $\mathcal{R}em(G;\{v_1\},\dots,\{v_m\})$ is well defined. Moreover, for any $1\leq i<m$, let $G_i=\mathcal{R}em(G;\{v_1\},\dots,\{v_{i-1}\})$ where $G_1=G$. Then by lemma \ref{lemma-2} it follows that
$\mathcal{R}em\big(G_i;\{v_{i}\},\{v_{i+1}\}\big)=\mathcal{R}em\big(G_i;\{v_{i+1}\},\{v_{i}\}\big)$ which in turn implies that
\begin{align*}
\mathcal{R}em\big(G;\{v_1\},\dots,\{v_{i}\},\{v_{i+1}\},\dots\{v_m\}\big)&=\\ \mathcal{R}em\big(&G;\{v_1\},\dots,\{v_{i+1}\},\{v_{i}\},\dots\{v_m\}\big).
\end{align*}
By repeatedly switching the order of any two vertices as above it follows that for any bijection $b:\{v_1,\dots,v_m\}\rightarrow\{v_1,\dots,v_m\}$
\begin{equation}\label{eq2.8}
\mathcal{R}em\big(G;\{v_1\},\dots\{v_m\}\big)=\mathcal{R}em\big(G;\{b(v_{1})\},\dots\{b(v_{m})\}\big).
\end{equation}

Suppose $S_1,\dots,S_{m}$ induces a sequence of reductions on $G$. If $S_0=V$ and each $S_{i-1}-S_{i}=\{v^i_1,\dots,v^i_{i_n}\}$ then theorem \ref{theorem3} implies
\begin{align*}
\mathcal{R}em(G;S_{0}-S_{1},\dots,S_{m-1}-S_{m})&=\\ \mathcal{R}em(&G;\{v^1_1\},\dots,\{v^1_{n_1}\},\dots,\{v^{m}_1\},\dots,\{v^m_{n_{m}}\}).
\end{align*}
If $T_1,\dots,T_{p}$ also induces sequence of reductions on $G$ where $T_0=V$ and each $T_{i-1}-T_{i}=\{\tilde{v}^i_1,\dots,\tilde{v}^i_{i_q}\}$ then similarly,
\begin{align*}
\mathcal{R}em(G;T_{0}-T_{1},\dots,T_{p-1}-T_{p})&=\\ \mathcal{R}em(&G;\{\tilde{v}^1_1\},\dots,\{\tilde{v}^1_{q_1}\},\dots,\{\tilde{v}^{p}_1\},\dots,\{\tilde{v}^p_{q_{p}}\}).
\end{align*}
If $S_m=T_p$ then $\bar{S}_m=\bigcup_{i=1}^m\big(S_{i-1}-S_{i}\big)=\bigcup_{i=1}^p\big(T_{i-1}-T_{i}\big)$. There is then a bijection $\tilde{b}:\bar{S}_m\rightarrow\bar{S}_m$ such that
$\mathcal{R}em(G;\{v^1_1\},\dots,\{v^m_{n_{m}}\})=\mathcal{R}em(G;\{\tilde{b}(\tilde{v}^1_1)\},\dots,\{\tilde{b}(\tilde{v}^p_{q_{p}})\})$ implying
$\mathcal{R}(G;S_1,\dots,S_m)=\mathcal{R}(G;T_1,\dots,T_{p-1},S_m)$
by use of equation (\ref{eq2.8}).

This completes the proof.
\end{proof}

The notation $\mathcal{R}_{S_m}[G]=\mathcal{R}(G;S_1,\dots,S_m)$ is then well defined for any graph $G=(V,E,\omega)$ in $\mathbb{G}_\pi$ and nonempty $S_m\subseteq V$. Moreover, if for any $G\in\mathbb{G}_\pi-\emptyset$, $\tau(G)\subseteq V$ is nonempty then the relation $G\sim H$ if $\mathcal{R}_{\tau(G)}[G]\simeq\mathcal{R}_{\tau(H)}[H]$ is an equivalence relation on $\mathbb{G}_\pi$. To see this, note that as the rule $\tau$ specifies a unique nonempty graph $\mathcal{R}_{\tau(G)}[G]$ for any $G\in\mathbb{G}_\pi-\emptyset$, then $\sim$ is an equivalence relation given that $\simeq$ is clearly reflexive, symmetric, and transitive. Hence, theorem \ref{theorem0.1} holds.

We now give a proof of theorem \ref{theorem-2}.

\begin{proof}
Suppose $\mathcal{X}_S(G)=(\mathcal{V},\mathcal{E},\mu)$ is a branch expansion of the graph $G=(V,E,\omega)$. By assumption then each vertex of $G$ and $\mathcal{X}_S(G)$ belongs to a branch of $\mathcal{B}_S(G)$ and $\mathcal{B}_S(\mathcal{X}_S(G))$ respectively.

Suppose that $e_{ij}\in E$. If $i=j$ then, as the vertex $v_i$ belongs to some branch $\beta\in\mathcal{B}_S(G)$, then $\omega(e_{ij})$ belongs to the weight sequence $\Omega_G(\beta)$. If $i\neq j$ then there are branches $v_1,\dots,v_i,\dots,v_s$ and $u_1,\dots,u_j,\dots,u_t$ in $\mathcal{B}_S(G)$ containing $v_i$ and $v_j=u_j$ respectively. Hence, $\beta=v_1,\dots,v_i,v_j,\dots,u_t\in\mathcal{B}_S(G)$. Therefore, $\omega(e_{ij})$ belongs to the weight sequence $\Omega_G(\beta)$.

Hence, each edge weight of $G$ belongs to a weight sequence of some $\beta\in\mathcal{B}_S(G)$. By similar reasoning each edge weight of $\mathcal{X}_S(G)$ belongs to a weight sequence of some $\beta\in\mathcal{B}_S(\mathcal{X}_S(G))$. Since $\mathcal{B}_S(G)\simeq\mathcal{B}_S(\mathcal{X}_S(G))$ then $\omega(e_{ij})$ is an edge weight of $G$ if and only if $\omega(e_{ij})$ is an edge weight of $\mathcal{X}_S(G)$. Therefore, $G$ and $\mathcal{X}_S(G)$ have the same set of edge weights.

Suppose $\beta\in\mathcal{B}_S(\mathcal{X}_S(G))$. As $\mathcal{B}_S(G)\simeq\mathcal{B}_S(\mathcal{X}_S(G))$ there is a bijection
$$b:\mathcal{B}_S(G)\rightarrow\mathcal{B}_S(\mathcal{X}_S(G))$$
such that $\Omega_{\mathcal{X}_S(G)}(\beta)=\Omega_{G}(b^{-1}(\beta))$. If $\beta(i)=v\notin S$ define
$$\tilde{b}(v)=\big(b^{-1}(\beta)\big)(i).$$
To see that $\tilde{b}$ is well defined suppose $v\in\mathcal{V}-S$. As $v$ is assumed to belong to some branch $\beta\in\mathcal{B}_S(\mathcal{X}_S(G))$ and $v\notin S$ then $v$ is an interior vertex of $\beta$. Given that the branches of $\mathcal{B}_S(\mathcal{X}_S(G))$ are pairwise independent then the branch $\beta$ containing $v$ is unique. Hence, $\tilde{b}$ is well-defined.

Moreover, as $|\beta|=|b^{-1}(\beta)|$ then $\tilde{b}$ maps interior vertices of $\beta$ to interior vertices of $b^{-1}(\beta)$. Since each vertex of $\mathcal{V}-S$ and $V-S$ belong to some branch of $\mathcal{B}_S(\mathcal{X}_S(G))$ and $\mathcal{B}_S(G)$ respectively then $\tilde{b}:\mathcal{V}-S\rightarrow V-S$ is onto. Let $$\mathcal{V}_\ell=\{v\in\mathcal{V}-S:\tilde{b}(v)=v_\ell\in V\}.$$ Note that $|\mathcal{V}_\ell|=n_\ell$ is then the number of branches in $\mathcal{B}_S(G)$ containing $v_\ell$. Moreover, $\mathcal{V}-S$ is the disjoint union
$$\mathcal{V}-S=\bigcup_{v_\ell\in V-S}\mathcal{V}_\ell.$$

As $b:\mathcal{B}_S(G)\rightarrow\mathcal{B}_S(\mathcal{X}_S(G))$ preserves weight sequences then $\mu(e_{jj})=\omega(e_{ii})$ for each $v_j\in\mathcal{V}_i$. Hence,
\begin{equation}\label{newer}
\prod_{v_j\in \mathcal{V}-S}\big(\mu(e_{jj})-\lambda\big)=\prod_{v_i\in V-S}\Big(\prod_{v_j\in \mathcal{V}_i}\big(\mu(e_{jj})-\lambda\big)\Big)=\prod_{v_i\in V-S}\big(\omega(e_{ii})-\lambda\big)^{n_i}.
\end{equation}

If $S=\{v_1,\dots,v_m\}$ then by assumption $$\mathcal{B}_{ij}(G;S)\simeq\mathcal{B}_{ij}(\mathcal{X}_S(G);S) \ \ \text{for} \ \ 1\leq i,j\leq m.$$
Equation (\ref{eq1.0}) then implies that the edge $e_{ij}$ in both $\mathcal{R}_S(G)$ and $\mathcal{R}_S(\mathcal{X}_S(G))$ have the same weight. Therefore, $\mathcal{R}_S(G)=\mathcal{R}_S(\mathcal{X}_S(G))$. As
$$\det\big(M(\mathcal{R}_S(G))-\lambda I\big)=\det\big(M\big(\mathcal{R}_S(\mathcal{X}_S(G))\big)-\lambda I\big)$$
then equation (\ref{eq3.3}) implies
$$\frac{\det\big(M(G)-\lambda I\big)}{\det\big(M(G|_{\bar{S}})-\lambda I\big)}=\frac{\det\big(M(\mathcal{X}_S(G))-\lambda I\big)}{\det\big(M(\mathcal{X}_S(G)|_{\bar{S}})-\lambda I\big)}.$$
Via equation (\ref{eq0.2}) it then follows that
$$\det\big(M(\mathcal{X}_S(G))-\lambda I\big)=\det\big(M(G)-\lambda I\big)\frac{\prod_{v_j\in \mathcal{V}-S}\big(\mu(e_{jj})-\lambda\big)}{\prod_{v_i\in V-S}\big(\omega(e_{ii})-\lambda\big)}.$$
From equation (\ref{newer}) we then have that
$$\det\big(M(\mathcal{X}_S(G))-\lambda I \big)=\det\big(M(G)-\lambda I\big)\prod_{v_i\in V-S}\big(\omega(e_{ii})-\lambda\big)^{n_i-1}.$$
\end{proof}

For theorem \ref{theoremunital} we give the following proof.

\begin{proof}
Let $G=(V,E,\omega)$ and $S\in st_0(G)$. Suppose the edge weights of $G$ are in the semiring $\mathbb{U}\subset\mathbb{W}[\lambda]$. By theorem \ref{theorem-2} both $G$ and $\mathcal{X}_S(G)=(\mathcal{V},\mathcal{E},\mu)$ have the same set of edge weights. Hence, $\mathcal{X}_S(G)$ has edge weights in $\mathbb{U}$.

Let $\mathcal{Y}_S(G)=(\mathcal{V},\mathcal{E},\nu)$ and suppose $\beta=v_1,\dots,v_\ell\in\mathcal{B}_S(\mathcal{X}_S(G))$. Then equation (\ref{eq.new}) gives the weights of the edges belonging to $\beta$ in $\mathcal{Y}_S(G)$ as
\begin{equation}\label{redy}
\nu(e_{i,i+1})=
\begin{cases}
\prod_{k=1}^{\ell-1}\mu(e_{k,k+1}) \ \ &\text{for} \ \ i=1\\
1 & 2\leq i\leq \ell-1
\end{cases}
\end{equation}
for $\ell\neq 1$. If $\ell=1$ then $\nu(e_{11})=\mu(e_{11})$. Since $1\in\mathbb{U}$ and $\mathbb{U}$ is closed under multiplication then the edges of $\beta$ in $\mathcal{Y}_S(G)$ have weights in $\mathbb{U}$. Given that each edge of $\mathcal{Y}_S(G)$ belongs to a branch $\beta\in\mathcal{B}_S(\mathcal{Y}_S(G))$ then this implies that $\mathcal{Y}_S(G)$ has edge weights in $\mathbb{U}$. As $\mathcal{Z}^\prime_S(G)$ is a restriction of the graph $\mathcal{Y}_S(G)$ it follows that $\mathcal{Z}^\prime_S(G)$ also has edge weights in $\mathbb{U}$.

To construct $\mathcal{Z}_S^{\prime\prime}(G)$ from the graph $\mathcal{Z}^\prime_S(G)$ we add edges with weights from the weight set of $\mathcal{Y}_S(G)$. Hence, $\mathcal{Z}_S^{\prime\prime}(G)$ has weights in $\mathbb{U}$. Suppose $\mathcal{Z}_S^{\prime\prime}(G)$ has parallel edges $e_1,\dots,e_N$ of the form $(e_i,e_j)$ with weights $w_1,\dots,w_N\in\mathbb{U}$ respectively. As $\mathbb{U}$ is closed under addition then $w_1+\dots+w_N\in\mathbb{U}$. Hence, replacing $e_1,\dots,e_N$ by $e_{ij}$ where $e_{ij}$ is given the weight $w_1+\dots+w_N$ implies that the graph $\mathcal{Z}_S(G)$ has edge weights in $\mathbb{U}$.

Observe that as $\mathcal{B}_S(\mathcal{X}_S(G))=\mathcal{B}_S(\mathcal{Y}_S(G))$ then equation (\ref{redy}) implies $$\mathcal{P}_\mu(\beta)=\prod_{k=1}^{|\beta|-1}\mu(e_{k,k+1})/\lambda^{|\beta|-2}=\mathcal{P}_\nu(\beta)$$
for all $\beta\in\mathcal{B}_S(\mathcal{X}_S(G)),\mathcal{B}_S(\mathcal{Y}_S(G))$. Hence, $\mathcal{R}_S(\mathcal{X}_S(G))=\mathcal{R}_S(\mathcal{Y}_S(G))$. The claim then is that $\mathcal{R}_S(\mathcal{Y}_S(G))=\mathcal{R}_S(\mathcal{Z}_S(G))$.

To verify this let
$$B_{ij}^\ell=\big\{\beta\in\mathcal{B}_{ij}(\mathcal{Y}_S(G);S):|\beta|=\ell\big\}.$$
Moreover, for $\beta\in\mathcal{B}_S(\mathcal{Y}_S(G))$ let $\mathcal{P}^1_\nu(\beta)$ be the first edge weight of $\beta$. Hence, if $\mathcal{L}=\max\{|\beta|:\beta\in\mathcal{B}_S(\mathcal{Y}_S(G))\}$ then
\begin{equation}\label{nextnew}
\sum_{\beta\in\mathcal{B}_{ij}(\mathcal{Y}_S(G);S)}\mathcal{P}_{\nu}(\beta)=\sum_{\ell=2}^{\mathcal{L}}\Big(\sum_{\beta\in B_{ij}^\ell}\mathcal{P}_{\nu}(\beta)\Big)=\sum_{\ell=2}^{\mathcal{L}}\Big(\big(\sum_{\beta\in B_{ij}^\ell}\mathcal{P}^1_{\nu}(\beta)\big)/\lambda^{\ell-2}\Big).
\end{equation}

Suppose $S=\{v_1,\dots,v_m\}$ and $\beta^j=v_i^j,\dots,v_k^j$ for $1\leq j\leq m$. Following the $\mathcal{Z}$-construction we add an edge of the form $(v_i,v_{k-|\beta|+2})$ to the graph $\mathcal{Z}^\prime_S(G)$ with weight $\mathcal{P}^1_\nu(\beta)$ for each branch in $B_{ij}^\ell$ not equal to $\beta^j$. Hence, the graph $\mathcal{Z}_S(G)|_S$ consists of the interior vertices of $\beta^1,\dots,\beta^m$. As these vertices have no loops and $\beta^1,\dots,\beta^m$ are pairwise independent then $S\in st_0(\mathcal{Z}_S(G)).$

Suppose that $B_{ij}^\ell=\{\beta_1,\dots,\beta_N\}$. We then add an edge of the form $(v_i,v_{k-\ell+2})$ to the graph $\mathcal{Z}^\prime_S(G)$ with weight $\mathcal{P}^1_\nu(\beta_t)$ for each branch $\beta_t\in\mathcal{B}_{ij}(\mathcal{Y}_S(G))$ not equal to $\beta^j$. Hence, in $\mathcal{Z}_S^{\prime\prime}(G)$ there are $N$ edges $e_1,\dots,e_N$ of the form $(v_i,v^j_{k-\ell+2})$ having weights $\mathcal{P}_\nu^1(\beta_1),\dots,\mathcal{P}_\nu^1(\beta_m)$ respectively.

Define the branch set
$$\tilde{B}_{ij}^\ell=\big\{\beta\in\mathcal{B}_{ij}(\mathcal{Z}_S(G);S):|\beta|=\ell\big\}.$$
If $\tilde{\beta}=v_i,v^j_{k-\ell+2},\dots,v_m^j$ then by construction $\tilde{B}_{ij}^\ell=\tilde{\beta}$. Moreover, for $\mathcal{Z}_S(G)=(\mathcal{V}-\mathcal{U},\tilde{\mathcal{E}},\tilde{\nu})$, the branch $\beta$ has weight sequence
$$\Omega_{\mathcal{Z}_S(G)}(\tilde{\beta})=\sum_{\beta\in B_{ij}^\ell}\mathcal{P}^1_\nu(\beta),0,\dots,0,1,0,\dots,1$$
as the weights of the edges $e_1,\dots,e_N$ are summed in the construction of $\mathcal{Z}_S(G)$. Therefore,
\begin{equation}\label{newnext}
\sum_{\beta\in \tilde{B}_{ij}^\ell}\mathcal{P}^1_{\tilde{\nu}}(\beta)=\sum_{\beta\in B_{ij}^\ell}\mathcal{P}^1_\nu(\beta).
\end{equation}

Note that the sum
$$\sum_{\beta\in\mathcal{B}_{ij}(\mathcal{Z}_S(G);S)}\mathcal{P}_{\tilde{\nu}}(\beta)=\sum_{\ell=2}^{\mathcal{L}}\Big(\sum_{\beta\in \tilde{B}_{ij}^\ell}\mathcal{P}_{\nu}(\beta)\Big)=\sum_{\ell=2}^{\mathcal{L}}\Big(\big(\sum_{\beta\in \tilde{B}_{ij}^\ell}\mathcal{P}^1_{\tilde{\nu}}(\beta)\big)/\lambda^{\ell-2}\Big).$$
Combining this with equations (\ref{nextnew}) and (\ref{newnext}) implies
$$\sum_{\beta\in\mathcal{B}_{ij}(\mathcal{Y}_S(G);S)}\mathcal{P}_{\nu}(\beta)=\sum_{\beta\in\mathcal{B}_{ij}(\mathcal{Z}_S(G);S)}\mathcal{P}_{\tilde{\nu}}(\beta).$$
Therefore, $\mathcal{R}_S(\mathcal{Y}_S(G))=\mathcal{R}_S(\mathcal{Z}_S(G))$ verifying the claim.

As $\mathcal{R}_S(G)=\mathcal{R}_S(\mathcal{Y}_S(G))$ then $\mathcal{R}_S(G)=\mathcal{R}_S(\mathcal{Z}_S(G))$. As $S$ is a complete structural set, corollary \ref{cor100} implies that $G$ and $\mathcal{Z}_S(G)$ have the same nonzero spectrum.
\end{proof}

\section{Improving Stability Estimates via Dynamical Network Expansions}
A major obstacle in understanding the dynamics of a high dimensional system is that the information needed to do so is spread throughout the various system components. By extending the notion of an isospectral branch expansion to a dynamical network it is possible to modify the structure of a dynamical network $(\mathcal{F},X)$ in a way that preserves the system's dynamics but concentrates this information. 

In 6.1 we introduce the concept of a \emph{dynamical network expansion} $(\mathcal{X}_S\mathcal{F},X_S)$ of the network $(\mathcal{F},X)$. We then show that if $(\mathcal{X}_S\mathcal{F},X_S)$ is globally stable then so is $(\mathcal{F},X)$ (see theorem \ref{gafp}). Using this connection we show in theorem \ref{last} that the global stability of a system $(\mathcal{F},X)$ is better understood by applying theorem \ref{stability} to $(\mathcal{X}_S\mathcal{F},X_S)$ rather than $(\mathcal{F},X)$.

\subsection{Dynamical Network Expansions}
Suppose $(\mathcal{F},X)$ is a dynamical network. Following the discussion in section 2.3, here we consider $(\mathcal{F},X)$ as a dynamical network with no local dynamics. That is, the component
$$\mathcal{F}_j:\bigoplus_{i\in\mathcal{I}_j}X_i\rightarrow X \ \ \text{for} \ \ j\in\mathcal{I}=\{1,\dots,n\}.$$
Alternatively we write
$$\mathcal{F}_j(\{x_i\})=\mathcal{F}_j(x_{j_1},\dots,x_{j_m}), \ \ \text{where} \ \ \mathcal{I}_j=\{j_1,\dots,j_m\}.$$
Note that if we \textit{replace} the variable $x_{j_i}$ of $\mathcal{F}_j$ by the function $f(y_1,\dots,y_k)$ the result is the function
$$\mathcal{F}_j(x_{j_1},\dots,x_{j_{i-1}},f(y_1,\dots,y_k),x_{j_{i+1}},\dots,x_{j_m})$$
having variables $x_{j_1},\dots,x_{j_{i-1}},y_1,\dots,y_k,x_{j_{i+1}},\dots,x_{j_m}$. Additionally, if the sequence $\underline{\gamma}=\ell_1,\dots,\ell_N$ let
$$\mathcal{F}_{j;\underline{\gamma}}(x_{j_1},\dots,x_{j_m})=\mathcal{F}_j(x_{j_1,\underline{\gamma}},\dots,x_{j_m,\underline{\gamma}}).$$
That is, the variables of the function $\mathcal{F}_{j;\underline{\gamma}}$ are indexed by the sequences $j_i,\ell_1,\dots,\ell_N$ for $1\leq i\leq m$.

As defined in section 2 the graph of interactions $\Gamma_\mathcal{F}=(V,E)$ is an unweighted graph. However, $\Gamma_\mathcal{F}$ can be considered to be the weighted graph $\Gamma_\mathcal{F}=(V,E,\omega)\in\mathbb{G}$ by setting $\omega(e_{ij})=1$ for all $e_{ij}\in E$. This will allow us, in particular, to consider structural sets $S\in st(\Gamma_\mathcal{F})$. Recall that if $S=\{v_1,\dots,v_m\}$ then $\mathcal{I}_S=\{1,\dots,m\}$ is the \textit{index set} of $S$.

\begin{definition}\label{admissable}
For $S\in st_0(\Gamma_{\mathcal{F}})$ the set
\begin{equation}\label{eq.add}
\mathcal{A}_S(\mathcal{F})=\{\ell_1,\dots,\ell_N:v_{\ell_1},\dots,v_{\ell_N}\in\mathcal{B}_S(\Gamma_\mathcal{F}), \ N>2\}
\end{equation}
is called the set of \textit{admissible sequences} of $\mathcal{F}$ with respect to $S$.
\end{definition}

Let $(\mathcal{F},X)$ be a dynamical network and suppose $S\in st_0(\Gamma_\mathcal{F})$. For $j\in\mathcal{I}$ let $\mathcal{F}_{\left<j,1\right>}$ be the function
$$\mathcal{F}_j=\mathcal{F}_j(x_{j_i},\dots,x_{j_m})$$
in which each variable $x_{j_\ell}$ is replaced by $x_{j_\ell,j}$ if $j_\ell\notin\mathcal{I}_S$.

For $i>1$ let $\mathcal{F}_{\left<j,i\right>}$ be the function
$$\mathcal{F}_{\left<j,i-1\right>}=\mathcal{F}_{\left<j,i-1\right>}(x_{\underline{\gamma}_1},\dots,x_{\underline{\gamma}_t})$$
in which each $x_{\underline{\gamma}_\ell}=x_{\ell_1,\dots,\ell_N}$ is replaced by the function $\mathcal{F}_{\ell_1;\underline{\gamma}_\ell}$ if $\ell_1\notin\mathcal{I}_S$. If $\ell_1\in\mathcal{I}_S$ for each $1\leq\ell\leq t$ then define $\mathcal{X}_S\mathcal{F}_j=\mathcal{F}_{\left<j,{i-1}\right>}$.

Let $\underline{\gamma}=\ell_1,\dots,\ell_N\in\mathcal{A}_S(\mathcal{F})$. For $1<i<|\underline{\gamma}|=N$ define the $N-2$ spaces
$$X_{i;\underline{\gamma}}=X_{\ell_1}.$$
Additionally, define the functions
$$\mathcal{X}_S\mathcal{F}_{i;\underline{\gamma}}(x_{i-1,\underline{\gamma}})=x_{i-1,\underline{\gamma}}.$$
By way of notation we let
\begin{equation}\label{eq2.40}
X_{N-1,\underline{\gamma}}=X_{\underline{\gamma}}, \  \mathcal{X}_S\mathcal{F}_{N-1,\underline{\gamma}}=\mathcal{X}_S\mathcal{F}_{\underline{\gamma}}, \ \text{and} \ x_{1,\underline{\gamma}}=x_{\ell_1}.
\end{equation}

\begin{definition}\label{expansion}
Suppose $S\in st_0(\Gamma_\mathcal{F})$ such that each vertex of $\Gamma_\mathcal{F}$ belongs to a branch of $\mathcal{B}_S(\Gamma_\mathcal{F})$. Let
$$
\mathcal{X}_S\mathcal{F}=\Big(\bigoplus_{j\in\mathcal{I}_S}\mathcal{X}_S\mathcal{F}_j\Big) \oplus\Big(\bigoplus_{
\begin{smallmatrix}
\underline{\gamma}\in\mathcal{A}_S(\mathcal{F})\\
1<i<|\underline{\gamma}|
\end{smallmatrix}} \mathcal{X}_S\mathcal{F}_{i;\underline{\gamma}}\Big).$$
and
$$X_S=\Big(\bigoplus_{j\in\mathcal{I}_S}X_j\Big)\oplus\Big(\bigoplus_{
\begin{smallmatrix}
\underline{\gamma}\in\mathcal{A}_S(\mathcal{F})\\
1<i<|\underline{\gamma}|
\end{smallmatrix}}X_{i;\underline{\gamma}}\Big).$$
The dynamical network $(\mathcal{X}_S\mathcal{F},X_S)$ is called the \emph{dynamical network expansion} of $(\mathcal{F},X)$ with respect to $S$.
\end{definition}

To see that $(\mathcal{X}_S\mathcal{F})_j$ is well defined for each $j\in\mathcal{I}_S$ suppose
$$\mathcal{F}_{\left<j,{i-1}\right>}=\mathcal{F}_{\left<j,{i-1}\right>}(x_{\underline{\gamma}_1},\dots,x_{\underline{\gamma}_t}).$$ If $\underline{\gamma}_\ell=\ell_1,\dots,\ell_N$ and $N>1$ then by construction $\ell_N=j\in\mathcal{I}_S$, $\ell_k\notin\mathcal{I}_S$, and each $\mathcal{F}_{\ell_k}$ is a function of $x_{\ell_{k-1}}$ for $1< k\leq N$. Therefore, if $\ell_1\in\mathcal{I}_S$ then the vertex sequence $v_{\ell_1},\dots,v_{\ell_N}\in\mathcal{B}_j(\Gamma_\mathcal{F};S)$.

If $\ell_1\notin \mathcal{I}_S$ then $v_{\ell_1},\dots,v_{\ell_N}$ is part of a branch of $\mathcal{B}_j(\Gamma_\mathcal{F};S)$ in the following sense. There is a branch of the form $v_1,\dots,v_{r-1},v_{\ell_1},v_{r+1},\dots,v_s\in\mathcal{B}_j(\Gamma_\mathcal{F};S)$ since $v_{\ell_1}$ is assumed to belong to some branch of $\mathcal{B}_j(\Gamma_\mathcal{F};S)$. Hence,
$$v_1,\dots,v_{r-1},v_{\ell_1},v_{\ell_2},\dots,v_{\ell_N}\in\mathcal{B}_j(\Gamma_\mathcal{F};S)$$
which contains the vertex sequence $v_{\ell_1},\dots,v_{\ell_N}$.

As each branch of $\mathcal{B}_S(\Gamma_\mathcal{F})$ consists of a finite sequence of vertices then for large enough $i$, $\ell_1\in\mathcal{I}_S$. As this holds for each $1\leq \ell\leq t$ then $\mathcal{X}_S\mathcal{F}_j$ is well defined for each $j\in\mathcal{I}_S$. Moreover, this implies that if $x_{\ell_1,\dots,\ell_N}$ is a variable of $\mathcal{X}_S\mathcal{F}_j$ and $N>2$ then $v_{\ell_1},\dots,v_{\ell_N}\in\mathcal{B}_j(\Gamma_\mathcal{F};S)$. Similarly, one can show that each $\mathcal{X}_S\mathcal{F}_j$ is well defined for $j\notin\mathcal{I}_S$.

\begin{lemma}\label{last1}
Suppose $S\in st_0(\Gamma_\mathcal{F})$ and $j\in\mathcal{I}_S$. Then $x_{\ell_1,\dots,\ell_N}$ is a variable of $\mathcal{X}_S\mathcal{F}_j$ if and only if either $N=1$, $\ell_1\in\mathcal{I}_S$, and $x_{\ell_1}$ is a variable of $\mathcal{F}_j$ or $N>2$ and $v_{\ell_1},\dots,v_{\ell_N}\in\mathcal{B}_j(\Gamma_\mathcal{F};S)$.
\end{lemma}

\begin{proof}
Suppose $j\in\mathcal{I}_S$ and that $x_{\ell_1,\dots,\ell_N}$ is a variable of $\mathcal{X}_S\mathcal{F}_j$. A variable of $\mathcal{X}_S\mathcal{F}_j$ is indexed by a sequence of length $N=1$ if it is not replaced by a function in the first step of the construction of $\mathcal{X}_S\mathcal{F}_j$. Hence, $x_{\ell_1}$ is a variable of $\mathcal{X}_S\mathcal{F}_j$ if $x_{\ell_1}$ is a variable of $\mathcal{F}_j$ and $\ell_1\in\mathcal{I}_S$.

Assuming $N=2$, or $x_{\ell_1,\ell_2}$ is a variable of $(\mathcal{X}_S\mathcal{F})_j$, then $\ell_1\in\mathcal{I}_S$ as $x_{\ell_1,\ell_2}$ is not replaced by the function $\mathcal{F}_{\ell_1;\ell_1,\ell_2}$ in the next step of the construction. However, this implies that $x_{\ell_1}$ is a variable of $\mathcal{F}_{\ell_2}$. Hence, in constructing $(\mathcal{X}_S\mathcal{F})_j$ the variable $x_{\ell_1}$ is not replaced by $x_{\ell_1,\ell_2}$ contradicting the assumption that $N=2$. Therefore, $N\neq 2$. However, if $N>2$ then as observed $v_{\ell_1},\dots,v_{\ell_N}\in\mathcal{B}_j(\Gamma_\mathcal{F};S)$.

Conversely, suppose $\beta=v_{\ell_1},\dots,v_{\ell_N}\in\mathcal{B}_j(\Gamma_\mathcal{F};S)$. Then $x_{\ell_{i-1}}$ is a variable of $\mathcal{F}_{\ell_i}$ for $1<i\leq N$ where  $\ell_1,\ell_N\in\mathcal{I}_S$, and $\ell_2,\dots,\ell_{N-1}\notin\mathcal{I}_S$. Hence, $x_{\ell_1,\dots,\ell_N}$ is a variable of $\mathcal{X}_S\mathcal{F}_{\ell_1}$. If $N=2$ then $v_{\ell_1},v_{\ell_2}\in\mathcal{B}_j(\Gamma_\mathcal{F};S)$. As $\ell_1,\ell_2\in\mathcal{I}_S$ then $x_{\ell_1}$ is a variable of $\mathcal{F}_j$.
\end{proof}

\begin{figure}
  \begin{center}
    \begin{overpic}[scale=.3]{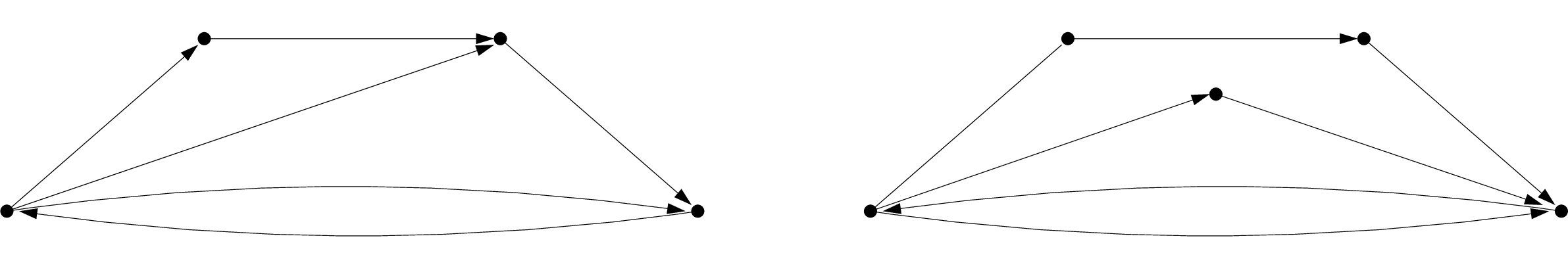}
    \put(0,0){$v_1$}
    \put(44,0){$v_2$}
    \put(12,15){$v_3$}
    \put(30,15){$v_4$}
    \put(21,-2){$\Gamma_{\mathcal{F}}$}

    \put(55,0){$v_1$}
    \put(99,0){$v_2$}
    \put(64,15){$v_{2;1342}$}
    \put(83,15){$v_{1342}$}
    \put(74,11.5){$v_{142}$}
    \put(76,-2){$\Gamma_{\mathcal{X}_S\mathcal{F}}$}
    \end{overpic}
  \end{center}
\caption{The graph of interactions $\Gamma_\mathcal{F}$ and $\Gamma_{\mathcal{X}_S\mathcal{F}}$ of $(\mathcal{F},X)$ and $(\mathcal{X}_S\mathcal{F},X_S)$ in example \ref{ex13}.}\label{fig2001}
\end{figure}

\begin{example}\label{ex13}
Consider the dynamical network $(\mathcal{F},X)$ where $\mathcal{F}$ is given by
$$\mathcal{F}(\mathbf{x})=\left[\begin{array}{l}
\mathcal{F}_1(x_2)\\
\mathcal{F}_2(x_1,x_4)\\
\mathcal{F}_3(x_1)\\
\mathcal{F}_4(x_1,x_3)\\
\end{array}
\right].$$
The graph of interactions $\Gamma_\mathcal{F}=(V,E,\omega)$ is shown in figure \ref{fig2001} (left) where each edge is given unit weight.

Note that $S=\{v_1,v_2\}$ is a complete structural set of $\Gamma_\mathcal{F}$. Moreover, as
$$\mathcal{B}_S(\Gamma_\mathcal{F})=\{\beta_1,\beta_2,\beta_3,\beta_4\}$$
where $\beta_1=v_1,v_4,v_2;$ $\beta_2=v_1,v_3,v_4,v_2;$ $\beta_3=v_1,v_2;$ and $\beta_4=v_2,v_1$ then each vertex of $\Gamma_\mathcal{F}$ belongs to a branch of $\mathcal{B}_S(\Gamma_\mathcal{F})$. Hence, $\mathcal{X}_S\mathcal{F}$ is defined.

To construct this expansion we first consider the components of $\mathcal{F}$ indexed by $\mathcal{I}_S=\{1,2\}$. For $\mathcal{F}_1=\mathcal{F}_1(x_2)$ note that $x_2$ is indexed by an element of $\mathcal{I}_S$. Hence, $\mathcal{F}_{\left<1,1\right>}=\mathcal{F}_1(x_2)$. As each variable of $\mathcal{F}_{\left<1,1\right>}$ is indexed by a sequence beginning with an element of $\mathcal{I}_S$ then
$$\mathcal{X}_S\mathcal{F}_1=\mathcal{F}_1(x_2).$$

For $\mathcal{F}_2=\mathcal{F}_2(x_1,x_4)$ the variable $x_4$ is not indexed by an element of $\mathcal{I}_S$. Hence, $\mathcal{F}_{\left<2,1\right>}=\mathcal{F}_2(x_1,x_{42})$. Replacing $x_{42}$ by the function $\mathcal{F}_{4;42}=\mathcal{F}_4(x_{142},x_{342})$ yields $\mathcal{F}_{\left<2,2\right>}=\mathcal{F}_4(x_1,\mathcal{F}_4(x_{142},x_{342}))$. Continuing we replace $x_{342}$ by $\mathcal{F}_{3;342}$ which gives us $\mathcal{F}_{\left<2,3\right>}$. Since each variable of $\mathcal{F}_{\left<2,3\right>}$ is indexed by a sequence beginning with an element of $\mathcal{I}_S$ then $\mathcal{X}_S\mathcal{F}_2=\mathcal{F}_{\left<2,3\right>}$ which is given by
$$\mathcal{X}_S\mathcal{F}_2=\mathcal{F}_2\big(x_1,\mathcal{F}_4(x_{142},\mathcal{F}_3(x_{1342}))\big).$$

Note that $\mathcal{A}_S(\mathcal{F})=\{142,1342\}$. For $142\in\mathcal{A}_S(\mathcal{F})$ there is a single corresponding function $\mathcal{X}_S\mathcal{F}_{2;142}$ given by
$\mathcal{X}_S\mathcal{F}_{2;142}(x_{1,142})=x_{1;142}$. Similarly, the functions $$\mathcal{X}_S\mathcal{F}_{2;1342}(x_{1;1342})=x_{1;1342} \ \ \text{and} \ \ \mathcal{X}_S\mathcal{F}_{3;1342}(x_{2;1342})=x_{2;1342}$$
correspond to $1342\in\mathcal{A}_S(\mathcal{F})$

By the identification in (\ref{eq2.40}) the expansion $(\mathcal{X}_S\mathcal{F}, X_S)$ is given by
\begin{equation}\label{eqnext}
\mathcal{X}_S\mathcal{F}(\textbf{x})=
\left[
\begin{array}{l}
\mathcal{X}_{S}\mathcal{F}_{1}(x_2)\\
\mathcal{X}_{S}\mathcal{F}_{2}(x_1,x_{142},x_{1342})\\
\mathcal{X}_{S}\mathcal{F}_{142}(x_1)\\
\mathcal{X}_{S}\mathcal{F}_{1342}(x_{2;1342})\\
\mathcal{X}_{S}\mathcal{F}_{2;1342}(x_1)\\
\end{array}
\right]=
\left[
\begin{array}{l}
\mathcal{F}_1(x_2)\\
\mathcal{F}_2\big(x_1,\mathcal{F}_4(x_{142},\mathcal{F}_3(x_{1342}))\big)\\
x_1\\
x_{2;1342}\\
x_1\\
\end{array}
\right]
\end{equation}
where $X_S=\big(X_1\oplus X_2\big)\oplus\big(X_{142}\oplus X_{1342}\oplus X_{2;1342}\big)$. The graph of interactions $\Gamma_{\mathcal{X}_S\mathcal{F}}$ is shown in figure \ref{fig2001} (right). Note that the branch expansion $\mathcal{X}_S(\Gamma_\mathcal{F})=\Gamma_{\mathcal{X}_S\mathcal{F}}$ (see lemma \ref{expand}).
\end{example}

A natural question to ask is whether the expansion $(\mathcal{X}_S\mathcal{F},X_S)$ and $(\mathcal{F},X)$ have similar dynamics. To address this let $\mathcal{L}=\max\{|\beta|:\beta\in\mathcal{B}_S(\Gamma_\mathcal{F})\}$. For $\tilde{\mathbf{x}}\in X$ let $\tilde{\mathbf{x}}_S\in X_S$ be given by
$$
(\tilde{\mathbf{x}}_S)_j=
\begin{cases}
\mathcal{F}^\mathcal{L}_j(\tilde{\mathbf{x}}) & \text{for} \ \ j\in\mathcal{I}_S\\
\mathcal{F}^{\mathcal{L}-i+1}_{\ell_1}(\tilde{\mathbf{x}}) & \text{for} \ \ j=i;\ell_1,\dots,\ell_N
\end{cases}.
$$

\begin{lemma}\label{identical}
Let $(\mathcal{X}_S\mathcal{F},X_S)$ be a dynamical network expansion of $(\mathcal{F},X)$. Then
$$\mathcal{X}_S\mathcal{F}_j^{k}(\tilde{\mathbf{x}}_S)=\mathcal{F}^{\mathcal{L}+k}_j(\tilde{\mathbf{x}})$$
for any $\tilde{\mathbf{x}}\in X$, $j\in\mathcal{I}$, and $k>0$.
\end{lemma}

\begin{proof}
Let $j\in\mathcal{I}_S$ and suppose $\mathcal{F}_j=\mathcal{F}_j(x_{j_1},\dots,x_{j_m})$ is a type-1 function. Then, $\mathcal{X}_S\mathcal{F}_j=\mathcal{F}_j$ and $j_\ell\in\mathcal{I}_S$ for $1\leq\ell\leq m$. As $(\tilde{x}_S)_{j_\ell}=\mathcal{F}^\mathcal{L}_{j_\ell}(\tilde{x})$ for $j_\ell\in\mathcal{I}_S$ then
\begin{equation}\label{eq123}
\mathcal{X}_S\mathcal{F}_j(\tilde{\mathbf{x}}_S)=\mathcal{F}_j\big(\mathcal{F}^\mathcal{L}_{j_1}(\tilde{\mathbf{x}}),\dots,\mathcal{F}^\mathcal{L}_{j_m}(\tilde{\mathbf{x}}))\big)=\mathcal{F}_j^{\mathcal{L}+1}(\tilde{\mathbf{x}})
\end{equation}
Suppose $\mathcal{F}_j=\mathcal{F}_j(x_{j_1},\dots,x_{j_m})$ is a type-2 function. Then either $j_\ell\in\mathcal{I}_S$ or $\mathcal{F}_{\left<j,2\right>}$ is the function $\mathcal{F}_j$ in which
the variable $x_{j_\ell}$ is replaced by the function $\mathcal{F}_{j_\ell;j_\ell,j}$ for all ${j_\ell}\notin\mathcal{I}_S$. This implies that
\begin{equation}\label{1}
(\tilde{\mathbf{x}}_S)_{j_\ell}=\mathcal{F}^\mathcal{L}_{j_\ell}(\tilde{\mathbf{x}}) \ \ \text{if} \ \ j_\ell\in\mathcal{I}_S.
\end{equation}

If on the other hand $j_\ell\notin\mathcal{I}_S$ then $\mathcal{F}_{j_\ell}$ is a type-1 function. By lemma \ref{last1}, if $x_{i,j_\ell,j}$ is a variable of $\mathcal{F}_{j_\ell;j_\ell,j}$ then $v_i,v_{j_\ell},v_{j}\in\mathcal{B}_j(\Gamma_\mathcal{F})$ implying $i,j_\ell,j\in\mathcal{A}_S(\mathcal{F})$. As per our notation, if $\ell_1,\dots,\ell_N\in\mathcal{A}_S(\mathcal{F})$
then the variable $x_{\ell_1,\dots,\ell_N}=x_{N-1;\ell_1,\dots,\ell_N}$. Therefore, each variable of  $\mathcal{F}_{j_\ell;j_\ell,j}$ has the form $x_{i,j_\ell,j}=x_{(2;i,j_\ell,j)}$. Hence,
\begin{equation}\label{2}
\mathcal{F}_{j_\ell;j_\ell,j}(\tilde{\mathbf{x}}_S)=\mathcal{F}_{j_\ell}(\mathcal{F}^{\mathcal{L}-2+1}\big(\tilde{\mathbf{x}})\big)=\mathcal{F}_{j_\ell}^{\mathcal{L}}(\tilde{\mathbf{x}}) \ \ \text{for} \ \ j_\ell\notin\mathcal{I}_S.
\end{equation}

As the variable $x_{\ell_j}$ is replaced by the function $\mathcal{F}_{j_\ell;j_\ell,j}$ for all $j_\ell\notin\mathcal{I}_S$ then (\ref{1}) and (\ref{2}) together imply that
$$\mathcal{X}_S\mathcal{F}_j(\tilde{\mathbf{x}}_S)=\mathcal{F}_j \big(\mathcal{F}^\mathcal{L}_{j_1}(\tilde{\mathbf{x}}),\dots,\mathcal{F}^\mathcal{L}_{j_m} (\tilde{\mathbf{x}})\big)=\mathcal{F}_j^{\mathcal{L}+1}(\tilde{\mathbf{x}})$$
as in (\ref{eq123}). Continuing in this manner it follows that
\begin{equation}\label{3}
\mathcal{X}_S\mathcal{F}_j(\tilde{\mathbf{x}}_S)=\mathcal{X}_S\mathcal{F}_j(\tilde{\mathbf{x}})=\mathcal{F}_j^{\mathcal{L}+1}(\tilde{\mathbf{x}}) \ \ \text{for} \ \ j\in\mathcal{I}_S.
\end{equation}
Hence, lemma \ref{identical} holds for $k=1$.

Note that each
$$\mathcal{X}_S\mathcal{F}_{i,\underline{\gamma}}(\tilde{\mathbf{x}}_S)=\mathcal{X}_S \mathcal{F}_{i;\ell_1,\dots,\ell_N}(\mathcal{F}_{\ell_1}^{\mathcal{L}-(i-1)+1}(\tilde{\mathbf{x}}))=\mathcal{F}_{\ell_1}^{\mathcal{L}-i+2}(\tilde{\mathbf{x}}).$$ This together with (\ref{3}) implies that
$$
\mathcal{X}_S\mathcal{F}_j(\tilde{\mathbf{x}}_S)=
\begin{cases}
\mathcal{F}^{\mathcal{L}+1}_j(\tilde{\mathbf{x}}) & \text{for} \ \ j\in\mathcal{I}_S\\
\mathcal{F}^{\mathcal{L}-i+2}_{\ell_1}(\tilde{\mathbf{x}}) & \text{for} \ \ j=i;\ell_1,\dots,\ell_N
\end{cases}.
$$

As $\mathcal{X}_S\mathcal{F}(\tilde{\mathbf{x}}_S)$ has the same form as $\tilde{\mathbf{x}}_S$ then replacing $\tilde{\mathbf{x}}_S$ by $\mathcal{X}_S\mathcal{F}(\tilde{\mathbf{x}}_S)$ and applying the same argument yields
$$
\mathcal{X}_S\mathcal{F}^2_j(\tilde{\mathbf{x}}_S)_j=
\begin{cases}
\mathcal{F}^{\mathcal{L}+1}_j(\tilde{\mathbf{x}}) & \text{for} \ \ j\in\mathcal{I}_S\\
\mathcal{F}^{\mathcal{L}-i+2}_{\ell_1}(\tilde{\mathbf{x}}) & \text{for} \ \ j=i;\ell_1,\dots,\ell_N
\end{cases}.
$$
Continuing in this manner it follows that  $\mathcal{X}_S\mathcal{F}_j^{k}(\tilde{\mathbf{x}}_S)=\mathcal{F}^{\mathcal{L}+k}_j(\tilde{\mathbf{x}})$
for all $j\in\mathcal{I}_S$, and $k>0$.
\end{proof}

\begin{theorem}\label{gafp}
Suppose $({\mathcal{X}_S\mathcal{F}},X_S)$ is a dynamical network expansion of $(\mathcal{F},X)$. If $({\mathcal{X}_S\mathcal{F}},X_S)$ has a globally attracting fixed point then $(\mathcal{F},X)$ also has a globally attracting fixed point.
\end{theorem}

\begin{proof}
Suppose $\mathbf{y}\in X_S$ is a globally attracting fixed point of the dynamical network $(\mathcal{X}_S\mathcal{F},X_S)$. For $\tilde{\mathbf{x}}\in X$ and $j\in\mathcal{I}_S$ lemma \ref{identical} implies that $\mathcal{X_S\mathcal{F}}_j^{k}(\tilde{\mathbf{x}})=\mathcal{F}^{\mathcal{L}+k}_j(\tilde{\mathbf{x}})$ for all $k>0$. Hence, for $j\in\mathcal{I}_S$
\begin{equation}\label{fin}
d\big(\mathcal{F}^k_j(\tilde{\mathbf{x}}),\mathbf{y}_j\big)\rightarrow 0 \ \ \text{as} \ \ k\rightarrow\infty.
\end{equation}

Proceeding as in the proof of lemma \ref{identical} suppose $j\notin\mathcal{I}_S$ and that the function $\mathcal{F}_j$ is a type-1 function. Hence, $\mathcal{F}_j=\mathcal{F}_j(x_{j_1},\dots,x_{j_m})$ where $j_\ell\in\mathcal{I}_S$ for all $1\leq\ell\leq m$. Let $\epsilon>0$. As the function $\mathcal{F}:X\rightarrow X$ is continuous and $$\mathcal{F}^{k}_j(\tilde{\mathbf{x}})=\mathcal{F}_j(\mathcal{F}^{k-1}_{j_1}(\tilde{\mathbf{x}}),\dots,\mathcal{F}_{j_m}^{k-1}(\tilde{\mathbf{x}}))$$
then (\ref{fin}) implies, for large enough $k$, that $d\big(\mathcal{F}^k_j(\tilde{\mathbf{x}}),\mathcal{F}_j(\mathbf{y})\big)<\epsilon.$ Therefore,
\begin{equation}\label{fin2}
d\big(\mathcal{F}^k_j(\tilde{\mathbf{x}}),\mathcal{F}_j(\mathbf{y})\big)\rightarrow 0 \ \ \text{as} \ \ k\rightarrow\infty
\end{equation}
if $\mathcal{F}_j$ is a type-1 function.

Suppose $j\notin\mathcal{I}_S$ and that the function $\mathcal{F}_j$ is a type-2 function. If $x_{j_\ell}$ is a variable of $\mathcal{F}_j$ then either $j_\ell\in\mathcal{I}_S$ or $\mathcal{F}_{j_\ell}$ is a type-1 function. If $j_\ell\in\mathcal{I}_S$ then (\ref{fin}) implies that $\mathcal{F}_{\ell_j}^k(\tilde{\mathbf{x}})\rightarrow\mathbf{y}_{j_\ell}$ as $k\rightarrow\infty$. If $j_\ell\notin\mathcal{I}_S$ then $\mathcal{F}_{j_\ell}$ is a type-1 function and $\mathcal{F}_{j_\ell}^k(\tilde{\mathbf{x}})\rightarrow\mathcal{F}_{j_\ell}(\mathbf{y})$ by (\ref{fin2}). Hence, $\mathcal{F}_j^k(\tilde{\mathbf{x}})\rightarrow\mathcal{F}_j^\infty(\tilde{\mathbf{x}})\in X$ as $k\rightarrow\infty$ where the component $\mathcal{F}_j^\infty(\tilde{\mathbf{x}})$ depends only on $\mathbf{y}$.

As each $\mathcal{F}_j$ for $j\in\mathcal{I}$ is a type-$i$ function for some $i<\infty$ then continuing in this manner implies that $(\mathcal{F},X)$ has a globally attracting fixed point.
\end{proof}

The following is an immediate corollary to theorem \ref{gafp}.

\begin{corollary}\label{cor30}
Let $({\mathcal{X}_S\mathcal{F}},X_S)$ be a dynamical network expansion of $(\mathcal{F},X)$. Suppose the constants $\tilde{\Lambda}_{ij}$ satisfy (\ref{eq2.3}) for $\mathcal{X}_S\mathcal{F}$. If $\rho(\tilde{\Lambda})<1$ then $(\mathcal{F},X)$ has a globally attracting fixed point.
\end{corollary}

The question then is whether there is any advantage in considering a dynamical network expansion over the original (unexpanded) network. As it turns out, dynamical network expansions always allow for better estimates (or at least no worse) of a dynamical network's global stability. To demonstrate this we first require the following lemma.

\begin{lemma}\label{expand}
Suppose that $(\mathcal{X}_S\mathcal{F},X_S)$ is a dynamical network expansion of $(\mathcal{F},X)$. Then $\mathcal{X}_S(\Gamma_\mathcal{F})=\Gamma_{\mathcal{X}_S\mathcal{F}}$.
\end{lemma}

\begin{proof}
Let $\Gamma_{\mathcal{X}_S\mathcal{F}}=(\mathcal{V},\mathcal{E},\mu)$. By construction $$\mathcal{V}=S\cup\{v_{i;\underline{\gamma}}:\underline{\gamma}\in\mathcal{A}_S(\mathcal{F}), \ 1<i<|\underline{\gamma}|\}$$
where the vertex $v_j\in S$ corresponds to the component $\mathcal{X}_S\mathcal{F}_j$ and $v_{i;\underline{\gamma}}\in\mathcal{V}-S$ the component $\mathcal{X}_S\mathcal{F}_{i;\underline{\gamma}}$.

Suppose $\underline{\gamma}=\ell_1,\dots,\ell_N\in\mathcal{A}_S(\mathcal{F})$. Then $v_{\ell_1},\dots,v_{\ell_N}\in\mathcal{B}_S(\Gamma_\mathcal{F})$ where $N>2$. Hence, lemma \ref{last1} implies that $x_{\ell_1,\dots,\ell_N}=x_{N-1,\underline{\gamma}}$ is a variable of the function $\mathcal{X}_S\mathcal{F}_{\ell_N}$. Moreover, as $\mathcal{X}_S\mathcal{F}_{i;\underline{\gamma}}=\mathcal{X}_S\mathcal{F}_{i;\underline{\gamma}} (x_{i-1;\underline{\gamma}})$ for $1<i<\underline{\gamma}$ where $x_{1,\underline{\gamma}}=x_{\ell_1}$ then the vertices $v_{\ell_1},v_{2;\underline\gamma},\dots,v_{N-1,\underline\gamma},v_{\ell_N}$ form a path in $\Gamma_{\mathcal{X}_S\mathcal{F}}$ such that $v_{\ell_1},v_{\ell_N}\in S$.

As any two admissible sequences of $\mathcal{A}_S(\mathcal{F})$ are distinct then $\Gamma_{\mathcal{X}_S\mathcal{F}}|_{\bar{S}}$ consists of $|\mathcal{A}_S(\mathcal{F})|$ paths that share no vertices of the form $v_{2,\underline{\gamma}},\dots,v_{N-1,\underline{\gamma}}$. Since the vertex $v_{i;\underline{\gamma}}\in\mathcal{V}$ has no loop in $\Gamma_{\mathcal{X}_S\mathcal{F}}|_{\bar{S}}$ then $\Gamma_{\mathcal{X}_S\mathcal{F}}|_{\bar{S}}$ contains no cycles. Hence, $S\in st_0(\Gamma_{\mathcal{X}_S\mathcal{F}})$ and for each $\underline{\gamma}=\ell_1,\dots,\ell_N$,
\begin{equation}\label{nexteq}
\mathcal{B}_{\ell_1,\ell_N}(\Gamma_{\mathcal{X}_S\mathcal{F}};S)=\{v_{\ell_1},v_{2,\underline\gamma}, \dots,v_{N-1,\underline\gamma},v_{\ell_N}:\underline{\gamma}=\ell_1,\dots,\ell_N\in\mathcal{A}_S(\mathcal{F})\}
\end{equation}
if $N>2$.

For $\beta=v_{\ell_1},\dots,v_{\ell_N}\in\mathcal{B}_{\ell_1,\ell_N}(\Gamma_\mathcal{F};S)$ and $\underline{\gamma}=\ell_1,\dots,\ell_N$ let
\begin{equation}\label{beta}
b(\beta)=
\begin{cases}
\beta & \text{if} \ \ N=2\\
v_{\ell_1},v_{2,\underline\gamma},\dots,v_{N-1,\underline\gamma},v_{\ell_N} & \text{otherwise}
\end{cases}.
\end{equation}
The claim is that
$$b:\mathcal{B}_{\ell_1,\ell_N}(\Gamma_{\mathcal{F}};S)\rightarrow\mathcal{B}_{\ell_1,\ell_N}(\Gamma_{\mathcal{X}_S\mathcal{F}};S)$$ and that this map is a bijection.

To see this suppose $N=2$. Then, $\beta=v_{\ell_1},v_{\ell_2}\in\mathcal{B}_{\ell_1,\ell_N}(\Gamma_{\mathcal{F}};S)$ implying $x_{\ell_1}$ is a variable of $\mathcal{F}_{\ell_2}$. As $\ell_1\in\mathcal{I}_S$ then by construction $x_{\ell_1}$ is a variable of $\mathcal{X}_S\mathcal{F}_{\ell_2}$. Since $v_{\ell_1},v_{\ell_2}\in S$ then $\beta=v_{\ell_1},v_{\ell_2}\in\mathcal{B}_{\ell_1,\ell_2}(\Gamma_{\mathcal{X}_S\mathcal{F}};S)$. Conversely, suppose $v_{\ell_1},v_{\ell_2}\in\mathcal{B}_{\ell_1,\ell_2}(\Gamma_{\mathcal{X}_S\mathcal{F}};S)$ i.e. $x_{\ell_1}$ is a variable of the function $\mathcal{X}_S\mathcal{F}_{\ell_2}$. Then $x_{\ell_1}$ is a variable of $\mathcal{F}_{\ell_2}$ by lemma \ref{last1} implying $\beta\in\mathcal{B}_{\ell_1,\ell_2}(\Gamma_{\mathcal{F}};S)$. Hence, $b$ restricted to branches of length $2$ is a bijection.

For $N>2$, the sequence $\underline{\gamma}=\ell_1,\dots,\ell_N\in\mathcal{A}_S(\mathcal{F})$. Hence,
$$b(\beta)=v_{\ell_1},v_{2;\underline{\gamma}},\dots,v_{N-1;\underline{\gamma}},v_{\ell_N}$$ which is a branch in $\mathcal{B}_{\ell_1,\ell_N}(\Gamma_{\mathcal{X}_S\mathcal{F}};S)$ by (\ref{nexteq}). Conversely, if
$$v_{\ell_1},v_{2;\underline{\gamma}},\dots,v_{N-1;\underline{\gamma}},v_{\ell_N}\in\mathcal{B}_{\ell_1,\ell_N}(\Gamma_{\mathcal{X}_S\mathcal{F}};S)$$
then $\underline{\gamma}\in\mathcal{A}_S(\mathcal{F})$ implying $v_{\ell_1},\dots,v_{\ell_N}\in\mathcal{B}_{\ell_1,\ell_N}(\Gamma_{\mathcal{F}};S)$. Therefore, the function $b:\mathcal{B}_{\ell_1,\ell_N}(\Gamma_{\mathcal{F}};S)\rightarrow\mathcal{B}_{\ell_1,\ell_N}(\Gamma_{\mathcal{X}_S\mathcal{F}};S)$ is a bijection.

As $\ell_1,\ell_N\in\mathcal{I}_S$ were arbitrary and each edge of $\Gamma_\mathcal{F}$ and $\Gamma_{\mathcal{X}_S\mathcal{F}}$ have unit weight then
$$\mathcal{B}_S(\Gamma_\mathcal{F})\simeq\mathcal{B}_S(\Gamma_{\mathcal{X}_S\mathcal{F}}).$$
Since the branches of $\Gamma_{\mathcal{X}_S\mathcal{F}}$ are independent it follows that $\mathcal{X}_S(\Gamma_\mathcal{F})=\Gamma_{\mathcal{X}_S\mathcal{F}}$.
\end{proof}

\begin{theorem}\label{last} \textbf{(Improved Stability Estimates for Dynamical Networks)}
Let $({\mathcal{X}_S\mathcal{F}},X_S)$ be a dynamical network expansion of $(\mathcal{F},X)$. Suppose there exist constants $\Lambda_{ij}$ satisfying (\ref{eq2.3}) for $\mathcal{F}$. Then there are constants $\tilde{\Lambda}_{ij}$ satisfying (\ref{eq2.3}) for $\mathcal{X}_S\mathcal{F}$ such that $\rho(\tilde{\Lambda})\leq\rho(\Lambda)$.
\end{theorem}

Before proving theorem \ref{last} suppose the constants $\Lambda_{ij}$ satisfy (\ref{eq2.3}) for the map $\mathcal{F}$. Define $\Gamma_\mathcal{F}(\Lambda)=(V,E,\omega)$ to be the graph with adjacency matrix $\Lambda$ where $\Lambda_{ij}$ satisfy (\ref{eq2.3}) for $\mathcal{F}$. Note that as $\Lambda_{ij}\neq 0$ if and only if $\mathcal{F}_j$ is a function of $x_i$ then $\Gamma_\mathcal{F}$ and $\Gamma_\mathcal{F}(\Lambda)$ are identical as unweighted graphs. Therefore, if $S\in st_0(\Gamma_\mathcal{F})$ then $S\in st_0(\Gamma_\mathcal{F}(\Lambda))$ and $\mathcal{B}_S\big(\Gamma_\mathcal{F}(\Lambda)\big)=\mathcal{B}_S(\Gamma_\mathcal{F})$.

Moreover, the branch $\beta=v_{i_1},\dots,v_{i_m}\in\mathcal{B}_S(\Gamma_\mathcal{F}(\Lambda))$ has weight sequence
$$\Omega_{\Gamma_\mathcal{F}(\Lambda)}(\beta)=\Lambda_{i_1i_2},\dots,0,\Lambda_{i_l,i_{\ell+1}},0,\dots,\Lambda_{i_{m-1},i_m}$$
implying that the branch product
\begin{equation}\label{beq}
\mathcal{P}_\omega(\beta)=\frac{\prod_{\ell=1}^{|\beta|-1}\Lambda_{i_\ell,i_{\ell+1}}}{\lambda^{|\beta|-2}}.
\end{equation} We now give a proof of theorem \ref{last}.

\begin{proof}
Let $\mathbf{x},\mathbf{y}\in X_S$. For $\underline{\gamma}\in\mathcal{A}_S(\mathcal{F})$ and $1<i<|\underline{\gamma}|$,
$$d\big(\mathcal{X}_S\mathcal{F}_{i;\underline{\gamma}}(\mathbf{x}), \mathcal{X}_S\mathcal{F}_{i;\underline{\gamma}}(\mathbf{y})\big)=
d(x_{i-1;\underline{\gamma}},y_{i-1;\underline{\gamma}}).
$$
Therefore, for any $\underline{\gamma}\in\mathcal{A}_S(\mathcal{F})$ and $1<i<|\underline{\gamma}|$ the constants
\begin{equation}\label{const1}
\tilde{\Lambda}_{i-1;\underline{\gamma},j}=
\begin{cases}
1 & \ \ \text{if} \ \ j=i;\underline{\gamma}\\
0 & \ \ \text{otherwise}
\end{cases}
\end{equation}
satisfy equation (\ref{eq2.3}) for $\mathcal{X}_S\mathcal{F}_j$.

For $j\in\mathcal{I}$ let $\mathcal{I}^+_j=\mathcal{I}_j\cup\mathcal{I}_S$ and $\mathcal{I}^-_j=\mathcal{I}_j-\mathcal{I}_S$. For $i_1\in\mathcal{I}_S$
\begin{align}\label{align1}
d\big(\mathcal{X}_S\mathcal{F}_{i_1}(\mathbf{x}),\mathcal{X}_S\mathcal{F}_{i_1}(\mathbf{y})\big)&\leq\\
\sum_{i_2\in\mathcal{I}^+_{i_1}}\Lambda_{i_2i_1}d(x_{i_2},y_{i_2})&+\sum_{i_2\in\mathcal{I}^-_{i_1}}\Lambda_{i_2i_1}
d\big(\mathcal{F}_{i_2;i_1}(\mathbf{x}),\mathcal{F}_{i_2;i_1}(\mathbf{y})\big).\notag
\end{align}
Similarly, the sum
\begin{align}
\sum_{i_2\in\mathcal{I}^-_{i_1}}\Lambda_{i_2i_1}
d\big(\mathcal{F}_{i_2;i_1}(\mathbf{x}),\mathcal{F}_{i_2;i_1}(\mathbf{y})\big)&\leq \label{align2}\\
\sum_{i_2\in\mathcal{I}^-_{i_1}}\Lambda_{i_2i_1}\Big(\sum_{i_3\in\mathcal{I}^+_{i_2}}\Lambda_{i_3i_2}d(x_{i_3,i_2,i_1},y_{i_3,i_2,i_1})&+\sum_{i_3\in\mathcal{I}^-_{i_2}}\Lambda_{i_3i_2}
d\big(\mathcal{F}_{i_3;i_2,i_1}(\mathbf{x}),\mathcal{F}_{i_3;i_2,i_1}(\mathbf{y})\big)\Big).\notag
\end{align}

Note that if $i_2\in\mathcal{I}_{i_1}^-$ and $i_3\in\mathcal{I}_{i_2}^+$ then $i_2\notin\mathcal{I}_S$ and $i_3\in\mathcal{I}_S$ implying the sequence $v_{i_3},v_{i_2},v_{i_1}\in\mathcal{B}_S\big(\Gamma_\mathcal{F}(\Lambda)\big)$. In particular, $\mathcal{P}_{\omega}(v_{i_3},v_{i_2},v_{i_1})\lambda=\Lambda_{i_3i_2}\Lambda_{i_2i_1}$ by (\ref{beq}).

Let $B^\ell_{i_1}=\{\beta\in\mathcal{B}_{i_1}(\Gamma_{\mathcal{F}}(\Lambda);S):|\beta|=\ell\}$. Moreover, if the branch
$\beta=v_1\dots,v_m$ let $x_{\beta}=x_{1,\dots,m}$ and $\gamma(\beta)=1,\dots,m$. If $\Gamma_\mathcal{F}(\Lambda)=(V,E,\omega)$ then combining inequalities in (\ref{align1}) and (\ref{align2}) yields
\begin{align*}
d\big(\mathcal{X}_S\mathcal{F}_{i_1}(&\mathbf{x}),\mathcal{X}_S\mathcal{F}_{i_1}(\mathbf{y})\big)\leq\\ \sum_{i_2\in\mathcal{I}^+_{i_1}}\Lambda_{i_2i_1}d(x_{i_2}&,y_{i_2})+
\sum_{i_2\in\mathcal{I}^-_{i_1}}\sum_{i_3\in\mathcal{I}^+_{i_2}}\Lambda_{i_3i_2}\Lambda_{i_2i_1}d(d(x_{i_3,i_2,i_1},y_{i_3,i_2,i_1}))+\\
&\sum_{i_2\in\mathcal{I}^-_{i_1}}\sum_{i_3\in\mathcal{I}^-_{i_2}}\Lambda_{i_3i_2}\Lambda_{i_2i_1}
d\big(\mathcal{F}_{i_3;i_2,i_1}(\mathbf{x}),\mathcal{F}_{i_3;i_2,i_1}(\mathbf{y})\big)=\\
\sum_{i_2\in\mathcal{I}^+_{i_1}}\Lambda_{i_2i_1}d(x_{i_2}&,y_{i_2})+
\sum_{\beta\in B^3_{i_1}}\big(\mathcal{P}_{\omega}(\beta)\lambda\big) d(x_\beta,y_\beta)+\\
&\sum_{i_2\in\mathcal{I}^-_{i_1}}\sum_{i_3\in\mathcal{I}^-_{i_2}}\Lambda_{i_3i_2}\Lambda_{i_2i_1}
d\big(\mathcal{F}_{i_3;i_2,i_1}(\mathbf{x}),\mathcal{F}_{i_3;i_2,i_1}(\mathbf{y})\big).
\end{align*}
Continuing in this manner it follows that
\begin{align*}
d\big(\mathcal{X}_S\mathcal{F}_{i_1}(\mathbf{x}),\mathcal{X}_S&\mathcal{F}_{i_1}(\mathbf{y})\big)\leq\\ \sum_{i_2\in\mathcal{I}^+_{i_1}}\Lambda_{i_2i_1}d(x_{i_2}&,y_{i_2})+
\sum_{\ell=3}^\mathcal{L}\Big(\sum_{\beta\in B^\ell_{i_1}}\big(\mathcal{P}_{\omega}(\beta)\lambda^{|\beta|-2}\big) d(x_\beta,y_\beta)\Big)=\\
\sum_{i_2\in\mathcal{I}^+_{i_1}}\Lambda_{i_2i_1}d(x_{i_2}&,y_{i_2})+
\sum_{\beta\in \mathcal{B}_{i_1}(\Gamma_{\mathcal{F}}(\Lambda);S)}\big(\mathcal{P}_{\omega}(\beta)\lambda^{|\beta|-2}\big) d(x_\beta,y_\beta)
\end{align*}
where $\mathcal{L}=\max\{|\beta|:\beta\in\mathcal{B}_S(\Gamma_{\mathcal{F}}(\Lambda))\}$.

By lemma \ref{last1} the variables of $\mathcal{X}_S\mathcal{F}_{i_1}$ are indexed by either $j\in\mathcal{I}_S$ or by the sequence $\gamma(\beta)$ for some $\beta\in\mathcal{B}_S(\Gamma_\mathcal{F})$. Hence,
\begin{equation}\label{const2}
\tilde{\Lambda}_{ij}=
\begin{cases}
\Lambda_{ij} & \text{if} \ \ i,j\in\mathcal{I}_S\\
\mathcal{P}_\omega(\beta)\lambda^{|\beta|-2} & \text{if} \ \ j\in\mathcal{I}_S \ \ \text{and} \ \ i=\gamma(\beta)\\
0 & \text{otherwise}
\end{cases}
\end{equation}
satisfy condition (\ref{eq2.3}) for $\mathcal{X}_S\mathcal{F}_j$. The constants in (\ref{const1}) and (\ref{const2}) therefore give a complete set of the constants $\tilde{\Lambda}_{ij}$ satisfying (\ref{eq2.3}) for $\mathcal{X}_SF$.

Suppose $S=\{v_1,\dots,v_m\}$. Note that the bijection defined in (\ref{beta}) also gives a bijection
\begin{equation}\label{biject}
b:\mathcal{B}_{ij}(\Gamma_\mathcal{F}(\Lambda);S)\rightarrow\mathcal{B}_{ij}(\Gamma_{\mathcal{X}_S\mathcal{F}}(\tilde{\Lambda});S) \ \ \text{for} \ \ 1\leq i,j\leq m.
\end{equation}
In particular, if $\beta\in\mathcal{B}_S(\Gamma_\mathcal{F}(\Lambda))$ then equations (\ref{const1}) and (\ref{const2}) imply that
$$\Omega_{\Gamma_{\mathcal{X}_S\mathcal{F}}(\tilde{\Lambda})}(b(\beta))=1,0,1,\dots,0,\mathcal{P}_\omega(\beta)\lambda^{|\beta|-2}.$$
Let $\Gamma_{\mathcal{X}_S\mathcal{F}}(\tilde{\Lambda})=(\mathcal{V},\mathcal{E},\mu)$. As $|\beta|=|b(\beta)|$ then $$\mathcal{P}_\mu\big(b(\beta)\big)=\frac{\mathcal{P}_\omega(\beta)\lambda^{|\beta|-2}}{\lambda^{|\beta|-2}}=\mathcal{P}_\omega(\beta).$$

Given the one-to-one correspondence noted in equation (\ref{biject}) it follows that $$\mathcal{R}_S(\Gamma_\mathcal{F}(\Lambda))=\mathcal{R}_S(\Gamma_{\mathcal{X}_S\mathcal{F}}(\tilde{\Lambda})).$$
As $S\in st_0(\Gamma_\mathcal{F}(\Lambda))$ and $S\in st_0(\Gamma_{\mathcal{X}_S\mathcal{F}}(\tilde{\Lambda}))$ then corollary \ref{cor100} implies that $\Gamma_\mathcal{F}(\Lambda)$ and $\Gamma_{\mathcal{X}_S\mathcal{F}}(\tilde{\Lambda})$ have the same nonzero spectrum. Hence, $\rho(\tilde{\Lambda})=\rho(\Lambda)$ for this choice of constants completing the proof.
\end{proof}

\begin{example}\label{ex1500}
Let $\mathcal{L}(x)=4x(1-x)$ be the standard logistic map and let the function $\mathcal{Q}(x)=1-x^2$ where both $\mathcal{L}:[0,1]\rightarrow[0,1]$ and $\mathcal{Q}:[0,1]\rightarrow[0,1]$. Suppose the local systems
$$
T_i(x_i)=
\begin{cases}
\mathcal{L}(x_i) \ \ \text{for} \ \ i=2,4\\
\mathcal{Q}(x_i) \ \ \text{for} \ \ i=1,3
\end{cases}.
$$
Let $F:[0,1]^4\rightarrow [0,1]^4$ be the interaction given by
$$F(\mathbf{x})=\left[\begin{array}{l}
\frac{1}{4}x_2\\
\frac{1}{4}x_1+\frac{1}{4}x_4\\
\frac{1}{4}x_1\\
\frac{1}{4}x_1+\frac{1}{4}x_3\\
\end{array}
\right].$$

Consider the dynamical network $(\mathcal{F},X)$ where $\mathcal{F}=F\circ T$ and $X=[0,1]^4$. Note that the constants $L_i=4$ for $i=2,4$ and $L_i=2$ for $i=1,3$ satisfy equation (\ref{eq0.1}) for the local systems $(T,X)$. For $\Lambda_{ij}=\max_{\textbf{x}\in X}|(DF)_{ji}(\textbf{x})|$ the matrix
$$
M_\mathcal{F}=
\left[\begin{array}{cccc}
0&\frac{1}{4}&\frac{1}{4}&\frac{1}{4}\\
\frac{1}{4}&0&0&0\\
0&0&0&\frac{1}{4}\\
0&\frac{1}{4}&0&0\\
\end{array}
\right]
\cdot diag[2,4,2,4]
=
\left[\begin{array}{cccc}
0&1&\frac{1}{2}&1\\
\frac{1}{2}&0&0&0\\
0&0&0&1\\
0&1&0&0\\
\end{array}
\right].
$$
As the spectral radius $\rho(M_\mathcal{F})=1.08>1$ then theorem \ref{stability} does not directly apply to the dynamical network $\mathcal{F}$.

Moreover, suppose $\tilde{\mathcal{F}}=(F\circ T)\circ id$ generates the dynamical network $(\tilde{\mathcal{F}},X)$ having no local dynamics. For $\tilde{\Lambda}_{ij}=\max_{\textbf{x}\in X}|(D\tilde{\mathcal{F}})_{ji}(\textbf{x})|$ one can compute that $\tilde{\Lambda}=M_\mathcal{F}$. As $\tilde{\Lambda}>1$ then considering $(\mathcal{F},X)$ as a network without local dynamics does not improve our estimate of the network's stability as it did in example \ref{local}.

However, note that
$$\mathcal{F}(\mathbf{x})=\left[\begin{array}{l}
\mathcal{F}_1(x_2)\\
\mathcal{F}_1(x_1,x_4)\\
\mathcal{F}_1(x_1)\\
\mathcal{F}_1(x_1,x_3)\\
\end{array}
\right]=\left[\begin{array}{l}
\frac{1}{4}\mathcal{L}(x_2)\\
\frac{1}{4}\mathcal{Q}(x_1)+\frac{1}{4}\mathcal{L}(x_4)\\
\frac{1}{4}\mathcal{Q}(x_1)\\
\frac{1}{4}\mathcal{Q}(x_1)+\frac{1}{4}\mathcal{Q}(x_3)\\
\end{array}
\right].$$

As $\mathcal{F}$ has the form of the map given in example \ref{ex13} then following this example one can compute
$$\mathcal{X}_S\mathcal{F}(\textbf{x})=
\left[
\begin{array}{l}
\mathcal{X}_{S}\mathcal{F}_{1}\\
\mathcal{X}_{S}\mathcal{F}_{2}\\
\mathcal{X}_{S}\mathcal{F}_{142}\\
\mathcal{X}_{S}\mathcal{F}_{1342}\\
\mathcal{X}_{S}\mathcal{F}_{2;1342}\\
\end{array}
\right]=\left[\begin{array}{l}
\frac{1}{4}\mathcal{L}(x_2)\\
\frac{1}{4}\mathcal{Q}(x_1)+\frac{1}{4}\mathcal{L}\big(\mathcal{Q}(x_{142})+\frac{1}{4}\mathcal{Q}(\frac{1}{4}\mathcal{Q}(x_{1342}))\big)\\
x_1\\
x_{2;1342}\\
x_1
\end{array}
\right]$$
for $S=\{v_1,v_2\}$. Here, $X_{142}$, $X_{2;1342}$, $X_{1342}=[0,1]$ implying $X_S=[0,1]^5$.

Letting $\bar{\Lambda}_{ij}=\max_{\textbf{x}\in X_S}|(D\mathcal{X}_S\mathcal{F})_{ji}(\mathbf{x})|$ it follows that
$$\bar{\Lambda}=\left[\begin{array}{ccccc}
0 & 1/2 & 1 & 1 & 0\\
1 & 0 & 0 & 0 & 0\\
0 & 0.265 & 0 & 0 & 0\\
0 & 0 & 0 & 0 & 1\\
0 & 0.012 & 0 & 0 & 0\\
\end{array}
\right].$$
As $\rho(\bar{\Lambda})=0.90<1$ corollary \ref{cor30} implies that the dynamical network $(\mathcal{F},X)$ does in fact have a globally attracting fixed point.
\end{example}

Importantly, the method of using network expansions generalizes the method used in \cite{Afriamovich07,Afriamovich10} for determining whether a network has a unique global attractor. As a demonstration we give the following examples.

\begin{example}
For the dynamical network $(\mathcal{F},X)$ let
$$L=\max_{1\leq i\leq n}L_i \ \ \text{and} \ \ \Lambda_{max}=\max_{1\leq i,j\leq n}\Lambda_{ij}.$$ Recall the spectral radius of the graph $\Gamma_{\mathcal{F}}$ (considered as an unweighted graph) is $\rho(\Gamma_{\mathcal{F}})$. The condition in \cite{Afriamovich07} guaranteeing the global stability of the dynamical network $(\mathcal{F},X)$ is the following: If the product
\begin{equation}\label{suff1}
L\Lambda_{max}\rho(\Gamma_{\mathcal{F}})<1
\end{equation}
then $(\mathcal{F},X)$ has a globally attracting fixed point.

Consider the dynamical network $(\mathcal{F},X)$ given in example \ref{ex2}. In this case $L=4$, $\Lambda_{max}=2/9$, and $\rho(\Gamma_{\mathcal{F}})=2$. Hence, $$L\Lambda_{max}\rho(\Gamma_{\mathcal{F}})=16/9>1.$$ Therefore, $(\mathcal{F},X)$ does not satisfy condition (\ref{suff1}). However, as the spectral radius $\rho(\mathcal{M}_{\mathcal{F}})<1$ then the dynamical network $(\mathcal{F},X)$ has a global attractor by theorem \ref{stability} (see example 2).
\end{example}

\begin{example}
Suppose $(\mathcal{F},X)$ is the dynamical network given in example \ref{ex1500}. Using the mathematical framework established in \cite{Afriamovich10} one can calculate that the topological pressure $P^{(k)}(\varphi)$ of the system is given by
$$P^{(k)}(\varphi)=\ln(1.08).$$
The condition given in \cite{Afriamovich10} that guarantees the global stability of the dynamical network $(\mathcal{F},X)$ is if the systems topological pressure $$P^{(k)}(\varphi)<0$$ (see theorem 2, \cite{Afriamovich10}). As $\ln(1.08)>0$ then the method introduced in \cite{Afriamovich10} does not imply the global stability of $(\mathcal{F},X)$. However, by expanding the network it is possible to demonstrate that the system does have a globally attracting fixed point (see example \ref{ex1500}).
\end{example}

As a final observation in this section, we note that it is possible to sequentially expand a dynamical network and thereby improve ones estimate of whether the original unexpanded system has a globally attracting fixed point.

%For instance, suppose the dynamical network expansion $(\mathcal{X}_S\mathcal{F},X_S)$ in example \ref{ex1500} is expanded over the complete structural set $$U=\{v_1,v_{142},v_{2;1342},v_{1342}\}.$$
%This results in the dynamical network $\big(\mathcal{X}_U(\mathcal{X}_S\mathcal{F}),(X_S)_U\big)$. Letting
%$$\hat{\Lambda}=\max_{\textbf{x}\in (X_S)_U}(|D\mathcal{X}_U(\mathcal{X}_S\mathcal{F})_{ji}(\mathbf{x})|)$$ one can compute that the spectral radius $\rho(\hat{\Lambda})=0.76$.

%Hence, the dynamical network expansion $\big(\mathcal{X}_U(\mathcal{X}_S\mathcal{F}),(X_S)_U\big)$ has a globally attracting fixed point. Importantly, corollary \ref{cor30} implies that both $(\mathcal{X}_S\mathcal{F},X_S)$ and $(\mathcal{F},X)$ also have globally attracting fixed points (although this is known from the calculations in example \ref{ex1500}).

\section{Concluding Remarks}
The goal of this paper is twofold; introduce isospectral graph transformations and use these transformations to investigate the dynamic properties of dynamical networks with arbitrary topologies (graph of interactions). For the first, we described the general process of isospectral graph reductions in which a graph is collapsed around a specific set of vertices. We then considered sequences of isospectral reductions and showed that under mild assumptions a typical graph could be uniquely reduced to a graph on any subset of its vertex set.

In \cite{BW09}, such reductions and sequences of reductions are used to improve the classical eigenvalue estimates found in \cite{Brauer47,Brualdi82,Hadamard03,Horn85,Varga09}. These estimates of Gershgorin et al. can be traced back to work done by L\'{e}vy, Desplanques, Minkowski, and Hadamard
\cite{Levy81,Desplanques87,Minkowski00,Hadamard03} on diagonally dominant matrices. The main result of \cite{BW09} is that such eigenvalue estimates improve as the graph associated to a matrix is reduced via the method of isospectral reduction given in this paper.

Isospectral reductions are also relevant to the theory of networks as they provide a flexible means of coarse graining networks (i.e. viewing the network at some appropriate scale) while simultaneously introducing new equivalence relations on the space of all networks. From the point of view of applications, isospectral graph reductions are a tool which allow experimentalists to modify the networks with which they are working. However, the particular transformation requires the expertise of the experimentalist (biologist, ecologist, etc.) to determine an appropriate set of network elements (vertices) over which to reduce the network.

In this paper we additionally considered isospectral graph transformations over fixed weight sets. Such transformations allow one to transform an arbitrary graph while preserving the set of edge weights of the graph along with its spectrum. Motivated by these procedures we also introduced the notion of a dynamical network expansion.

Dynamical network expansions modify a dynamical network (i.e. a dynamical system with a graph structure) in a way that preserves the system's dynamics but alters its associated graph structure (graph of interactions). In this paper we demonstrated that this procedure allows one to establish global stability of a more general class of dynamical networks than considered in \cite{Afriamovich07,Afriamovich10}. That is, such expansions provide more information and can be used to obtain stronger results regarding network dynamics than the direct analysis of the network. It is worth mentioning that, as any finite dimension dynamical system with discrete time can be considered a dynamical network, this technique can be applied to a very large class of systems.

Lastly, the results of the present paper introduce various approaches to simplifying a graph's structure while maintaining the graph's (network's) spectrum. Therefore, these techniques can be used for optimal design, in the sense of structure simplicity of dynamical networks with prescribed dynamical properties ranging from synchronizability to chaoticity \cite{Afriamovich07, Blank06}.

\section{Acknowledgments}
We would like to thank C. Kirst and M. Timme for valuable comments and discussions. This work was partially supported by the NSF grants DMS-0900945, BECS-1024868, and the Humboldt Foundation. 

%The authors are also indebted to the anonymous referees for valuable comments and suggestions.

\end{document}